\newtheorem{theorem}{Theorem}
\newtheorem{definition}[theorem]{Definition}
\newtheorem{lemma}[theorem]{Lemma}
\newtheorem{proposition}[theorem]{Proposition}
\newenvironment{remark}{\rem\rm}{\endrem}
\newtheorem{assumption}{Assumption}
\newcommand{\R}{\mathbb{R}}
\newcommand{\cH}{{\mathcal H}}
\newcommand{\cG}{{\mathcal G}}
\newcommand{\abs}[1]{\left\lvert #1 \right\rvert}
\newcommand{\norm}[1]{\left\lVert #1 \right\rVert}
\newcommand{\normsq}[1]{\norm{#1}^{2}}
\newcommand{\sprod}[2]{\left\langle #1, #2 \right\rangle}
\newcommand{\prox}[3][]{\operatorname{prox}^{#1}_{#2}\left(#3 \right)}
\newcommand{\ny}{\nabla_{y}}
\newcommand{\ph}{\, \cdot \,}
\newcommand{\minus}{\scalebox{0.75}[1.0]{$-$}}
\DeclareMathOperator{\tr}{trace}
\DeclareMathOperator{\diag}{diag}
\DeclareMathOperator{\gra}{gra}
\DeclareMathOperator{\sgn}{sgn}
\DeclareMathOperator*{\argmin}{arg\,min}
\DeclareMathOperator*{\dom}{dom}
\DeclareMathOperator*{\pr}{Pr}
\DeclareOldFontCommand{\bf}{\normalfont\bfseries}{\mathbf}
\title{An accelerated minimax algorithm for convex-concave saddle point problems with nonsmooth coupling function}
\author[1,2,$\ast$]{\normalsize Radu~Ioan~Bo{\c t}}
\author[1,$\ast$]{Ern{\" o}~Robert~Csetnek}
\author[2,$\ast$]{Michael~Sedlmayer}
\affil[1]{Faculty~of~Mathematics}
\affil[2]{Research~Network~Data~Science~@~Uni~Vienna}
\affil[$\ast$]{University~of~Vienna, Austria}
\affil[ ]{\texttt{ \{radu.bot, robert.csetnek, michael.sedlmayer\}@univie.ac.at} }
\begin{document}
\maketitle

\begin{abstract}
	\noindent
	In this work we aim to solve a convex-concave saddle point problem, where the convex-concave coupling function is smooth in one variable and nonsmooth in the other and \emph{not} assumed to be linear in either. The problem is augmented by a nonsmooth regulariser in the smooth component. We propose and investigate a novel algorithm under the name of \emph{OGAProx}, consisting of an \emph{optimistic gradient ascent} step in the smooth variable coupled with a proximal step of the regulariser, and which is alternated with a \emph{proximal step} in the nonsmooth component of the coupling function.
	We consider the situations convex-concave, convex-strongly concave and strongly convex-strongly concave related to the saddle point problem under investigation. Regarding iterates we obtain (weak) convergence, a convergence rate of order $ \mathcal{O}(\frac{1}{K}) $ and linear convergence like $\mathcal{O}(\theta^{K})$ with $ \theta < 1 $, respectively. In terms of function values we obtain ergodic convergence rates of order $ \mathcal{O}(\frac{1}{K}) $, $ \mathcal{O}(\frac{1}{K^{2}}) $ and $ \mathcal{O}(\theta^{K}) $ with $ \theta < 1 $, respectively.
	We validate our theoretical considerations on a nonsmooth-linear saddle point {problem, the training of multi kernel support vector machines and a classification problem incorporating minimax group fairness.}

	\begin{keywords}
		saddle point problem, convex-concave, minimax algorithm, convergence rate, acceleration, linear convergence
	\end{keywords}
\end{abstract}

\section{Introduction}
Saddle point -- or minimax -- problems have witnessed increased interest due to many relevant and challenging applications in the field of machine learning, with the most prominent being the training of Generative Adversarial Networks (GANs)~\cite{GAN}. Even though the problems in reality are often not of this form, in the classical setting the minimax objective comprises a smooth convex-concave coupling function with Lipschitz continuous gradient and a (potentially nonsmooth) regulariser in each variable, leading to a convex-concave objective in total.

One well established method in practice due to its simplicity and computational efficiency is \emph{Gradient Descent Ascent} (GDA), either in a \emph{simultaneous} or in an \emph{alternating} variant (for a recent comparison of the convergence behaviour of the two schemes we refer to \cite{ZWLG}). However, naive application of GDA is known to lead to oscillatory behaviour or even divergence already in simple cases such as bilinear objectives. Most algorithms with convergence guarantees in the general convex-concave setting make use of the formulation of the first order optimality conditions as monotone inclusion or variational inequality, treating both components in a symmetric fashion. For example we have the \emph{Extragradient method}~\cite{EG} whose application to minimax problems has been studied in~\cite{MirrorProx} under the name of \emph{Mirror Prox}, and the \emph{Forward-Backward-Forward method} (FBF)~\cite{FBF} with application to saddle point problems in~\cite{2steps}. Both algorithms have even been successfully applied to the training of GANs (see~\cite{Gidel, 2steps}), but, though being single-loop methods, suffer in practice from requiring two gradient evaluations per iteration. A possible way to avoid this is to reuse previous gradients. Doing this for FBF -- as shown in~\cite{2steps} -- recovers the \emph{Forward-Reflected Backward method}~\cite{FRB} which was applied to saddle point problems under the name of \emph{Optimistic Mirror Descent} and to GAN training under the name of \emph{Optimistic Gradient Descent Ascent}~\cite{OGDA1, OGDA2, OGDA3}.

The first method treating general coupling functions with an asymmetric scheme is the \emph{Accelerated Primal-Dual Algorithm} (APD) by~\cite{Aybat}, involving an optimistic gradient ascent step in one component which is followed by a gradient descent step in the other one. In the special case of a bilinear coupling function APD recovers the \emph{Primal-Dual Hybrid Gradient Method} (PDHG)~\cite{PDHG}. In the case of the minimax objective being strongly convex-concave~ acceleration of PDHG is obtained  in \cite{PDHG}, which is also done for APD in~\cite{Aybat}, however only under the rather limiting assumption of linearity of the coupling function in one component.

In this paper we introduce a novel algorithm \emph{OGAProx} for solving a convex-concave saddle point problem, where the convex-concave coupling function is smooth in one variable and nonsmooth in the other, and it is augmented by a nonsmooth regulariser in the smooth component. OGAProx consists of an optimistic gradient ascent step in the smooth component of the coupling function combined with a proximal step of the regulariser, and which is followed by a proximal step of the coupling function in the nonsmooth component. We will be also able to accelerate our method in the convex-strongly concave setting \emph{without} linearity assumption on the coupling function. Furthermore we prove linear convergence if the problem is strongly convex-strongly concave, yielding similar results as for PDHG~\cite{PDHG} in the bilinear case.

So far in most works nonsmoothness is only introduced via regularisers, as the coupling function is typically accessed through gradient evaluations. Recently there is another development, although with the saddle point problem \emph{not} being convex-concave, where the assumption on differentiability of the coupling function in both components is weakened to only one component~\cite{altGDA}. As the evaluation of the proximal mapping does not require differentiability we will assume the coupling function to be smooth in only one component, too.

The remainder of the paper is organised as follows. Next we will introduce the precise problem formulation and the setting we will work with, formulate the proposed algorithm OGAProx and state our contributions. This will be followed by preliminaries in Section~\ref{sec:prelim}. Afterwards we will discuss the properties of our algorithm in the convex-concave and convex-strongly concave setting and state respective convergence results in Section~\ref{sec:convex-strconcave}. After that we will investigate the convergence of the method under the additional assumption of strong convexity-strong concavity in Section~\ref{sec:strconvex-strconcave}. The paper will be concluded by numerical experiments in Section~\ref{sec:numerics}, where we treat a simple nonsmooth-linear saddle point {problem, the training of multi kernel support vector machines and a classification problem taking into account minimax group fairness.}

\subsection{Problem description}
Consider the saddle point problem
\begin{equation}
	\label{min-max}
	\min_{x\in \cH} \max_{y\in\cG} \Psi(x,y) := \Phi(x,y) - g(y),
\end{equation}
where $ \cH, \, \cG $ are real Hilbert spaces, $ \Phi: \cH \times \cG \to  \R \cup \{+\infty \}$ is a coupling function  with
$\dom \Phi := \{(x,y) \in  \cH \times \cG \ | \ \Phi(x,y) < +\infty\} \neq \emptyset$ and $ g: \cG \to \R \cup \{+\infty \}$ a regulariser. Throughout the paper (unless otherwise specified) we will make the following assumptions:
 \begin{enumerate}
 \item[$\bullet$] $g$ is proper, lower semicontinuous and convex with modulus  $\nu \geq 0$, i.e. $ g - \frac{\nu}{2} \normsq{\ph} $ is convex (notice that we also allow and consider the situation $\nu = 0$, in which case $g$ is convex; otherwise $g$ is strongly convex);
  \item[$\bullet$] for all $ y \in \dom g $, $ \Phi(\ph,y): \cH \to \R \cup \{+\infty \}$ is proper, convex and lower semicontinuous;
\item[$\bullet$] for all $x \in \pr_{\cH} (\dom \Phi) := \{ u \in \cH \ | \ \exists y \in \cG \ \mbox{such that} \ (u,y) \in \dom \Phi \}$ we have that  $\dom \Phi(x, \ph) = \cG$ and $\Phi(x,\ph): \cG\to \R $ is concave and Fr{\'e}chet differentiable. Moreover, $\pr_{\cH} (\dom \Phi) $ is closed;
\item[$\bullet$] there exist $ L_{yx}, \, L_{yy} \geq 0$ such that for all $ (x,y), \, (x',y') \in \pr_{\cH} (\dom \Phi) \times \dom g $ it holds
\begin{equation}
	\label{cond-Phi_y}
	\norm{\ny \Phi (x, y) - \ny \Phi (x', y')} \leq L_{yx} \norm{x-x'} + L_{yy} \norm{y - y'}.
\end{equation}
 \end{enumerate}

By convention we set $ +\infty - (+\infty) := +\infty $. Thus, the situation can be summarised by
\begin{equation}
	\label{psi}
	\Psi(x, y) =
	\begin{cases}
		- \infty & \text{if } x \in \pr_{\cH} (\dom \Phi) \text{ and } y \notin \dom g,\\
		\Phi(x,y) - g(y) & \text{if } x \in \pr_{\cH} (\dom \Phi) \text{ and } y \in \dom g,\\
		+ \infty & \text{if } x \notin \pr_{\cH} (\dom \Phi).
	\end{cases}
\end{equation}

\noindent We are interested in finding a \emph{saddle point} of~\eqref{min-max}, which is a point $ (x^{\ast}, y^{\ast}) \in \cH \times \cG $ that fulfils the inequalities
\begin{equation}
	\label{def-saddle-point}
	\Psi(x^{\ast}, y) \leq \Psi(x^{\ast}, y^{\ast})\leq \Psi(x, y^{\ast}) \quad \forall (x,y) \in \cH \times \cG.
\end{equation}
For the remainder we assume that such a saddle point exists.

The assumptions considered above ensure that for any saddle point $ (x^{\ast}, y^{\ast}) \in \cH \times \cG $ we have
$$x^{\ast}\in \pr\nolimits_{\cH} (\dom \Phi), \quad y^{\ast}\in \dom g \ \mbox{and} \ \Psi(x^{\ast}, y^{\ast}) = \Phi (x^{\ast},y^{\ast}) - g(y^{\ast})\in\R.$$

Finding a saddle point of~\eqref{min-max} amounts to solving the necessary and sufficient first order optimality conditions, given by the following coupled inclusion problems
\begin{equation}
	0 \in \partial \left[ \Phi (\ph, y^{\ast}) \right] (x^{\ast})
	\hspace{5mm}
	\text{and}
	\hspace{5mm}
	0 \in \minus \ny \Phi (x^{\ast}, y^{\ast}) + \partial g(y^{\ast}).
\end{equation}

\begin{remark} In case $\Phi$ and $g$ have full domain, $\Psi$ is a convex-concave function with full domain and the set $ \pr_{\cH} (\dom \Phi) $ is obviously closed. However, in order to allow more flexibility and to cover a wider range
of problems (see also the last section with numerical experiments), our investigations are carried out in the more general setting given by the assumptions described above. Furthermore, these assumptions allow us to stay in the rigorous setting of the theory of convex-concave saddle functions as described by Rockafellar in \cite{TR} (see Definition \ref{def-rock} and Proposition \ref{prop:saddle-function} below).
\end{remark}

\subsection{Algorithm}
The algorithm we investigate performs an optimistic gradient ascent step of $\Phi$ followed by an evaluation of the proximal mapping $g$ in the variable $y$, while it carries out a purely proximal step of $\Phi$ in $x$. We will call this method \emph{Optimistic Gradient Ascent -- Proximal Point algorithm} (OGAProx) in the following. For all $ k \geq 0 $ we define
\newsavebox{\mycases}
\begin{align}
	\sbox{\mycases}
	{$
		\displaystyle
		\left\{
		\begin{array}{@{}c@{}}
			\vphantom{
				y_{k+1} := \prox{\sigma_{k} g}{y_{k} + \sigma_{k} \left[ (1 + \theta_{k}) \ny \Phi(x_{k}, y_{k}) - \theta_{k} \ny \Phi(x_{k-1}, y_{k-1})\right] },
			}\\
			\vphantom{
				x_{k+1} := \prox{\tau_{k} \Phi(\ph, y_{k+1})}{x_{k}},
			}
		\end{array}
		\right.\kern-\nulldelimiterspace
	$}
	\raisebox{-.5\ht\mycases}[0pt][0pt]{\usebox{\mycases}}
	\label{alg_y}
	y_{k+1} & = \prox{\sigma_{k} g}{y_{k} + \sigma_{k} \left[ (1 + \theta_{k}) \ny \Phi(x_{k}, y_{k}) - \theta_{k} \ny \Phi(x_{k-1}, y_{k-1})\right] },\\
	\label{alg_x}
	x_{k+1} & = \prox{\tau_{k} \Phi(\ph, y_{k+1})}{x_{k}},
\end{align}
with the conventions $ x_{-1} := x_{0} $ and $ y_{-1} := y_{0} $ for starting points $ x_{0} \in \pr_{\cH} (\dom \Phi) $ and $ y_{0} \in \dom g $. The particular choices of the sequences $ (\sigma_{k})_{k \geq 0}, \, (\tau_{k})_{k \geq 0} \subseteq \R_{++} $ and $ (\theta_{k})_{k \geq 0} \subseteq \left( 0, 1 \right] $ will be specified later.

\subsection{Contribution}
Let us summarize the main results of this paper:
\begin{enumerate}
	\item We introduce a novel algorithm to solve saddle point problems with nonsmooth coupling functions, which in general is \emph{not} assumed to be linear in either component.

	\item We prove for the saddle function $ \Psi $ being
	\begin{enumerate}
		\item convex-concave (see Theorem \ref{convex-concave}):
		\begin{itemize}
			\item weak convergence of the generated sequence $ (x_{k}, y_{k})_{k \geq 0} $ to a saddle point $ (x^{\ast}, y^{\ast}) $ as $ k \to + \infty $;

			\item convergence of the minimax gap $\Psi(\bar{x}_{K},y^{\ast}) - \Psi(x^{\ast},\bar{y}_{K}) $ to zero like $ \mathcal{O}(\frac{1}{K})$ as $K \to + \infty $, where $(\bar{x}_{K})_{K \geq 1}$ and $(\bar{y}_{K})_{K \geq 1}$ are the \emph{ergodic sequences} obtained by averaging $(x_k)_{k \geq 1}$ and $(y_k)_{k \geq 1}$, respectively;
		\end{itemize}

		\item convex-strongly concave (see Theorem \ref{convex-str-concave}):
		\begin{itemize}
			\item strong convergence of $ (y_{k})_{k \geq 0} $ to $ y^{\ast} $ like $ \mathcal{O}(\frac{1}{k})$ as $ k \to + \infty $;

			\item convergence of the minimax gap  $ \Psi(\bar{x}_{K},y^{\ast}) - \Psi(x^{\ast},\bar{y}_{K}) $ to zero like $ \mathcal{O}(\frac{1}{K^{2}})$ as $K \to + \infty $;
		\end{itemize}

		\item strongly convex-strongly concave (see Theorem \ref{str-convex-str-concave}):
		\begin{itemize}
			\item linear convergence of $ (x_{k}, y_{k})_{k \geq 0} $ to $ (x^{\ast}, y^{\ast}) $ like $ \mathcal{O}(\theta^{k}) $, with $ 0 < \theta < 1 $, as $ k \to + \infty $;

			\item linear convergence of the minimax gap $ \Psi(\bar{x}_{K},y^{\ast}) - \Psi(x^{\ast},\bar{y}_{K}) $ to zero like $ \mathcal{O}(\theta^{K}) $ as $K \to + \infty $.
		\end{itemize}
	\end{enumerate}
\end{enumerate}

\section{Preliminaries}\label{sec:prelim}

We recall some basic notions in convex analysis and monotone operator theory (see for example \cite{BC}). The real Hilbert spaces $ \cH $ and $ \cG $ are endowed with inner products $ \sprod{\ph}{\ph}_{\cH} $ and $ \sprod{\ph}{\ph}_{\cG} $, respectively. As it will be clear from the context  which one is meant, we will drop the index for ease of notation and write $ \sprod{\ph}{\ph} $ for both. The norm induced by the respective inner products is defined by $ \norm{\ph} := \sqrt{\sprod{\ph}{\ph}} $.

A function $ f: \cH \to  \R \cup \{+\infty \}$ is said to be \emph{proper} if  $\dom f := \{x \in \cH: f(x) < + \infty \} \neq\emptyset$. The \emph{(convex) subdifferential} of the function $f:\cH \to \R \cup \{ + \infty \}$ at $ x \in \cH$ is defined by  $\partial f (x) := \{ u \in \cH \hspace{2mm} \vert \, \sprod{y - x}{u} + f(x) \leq f(y) \ \forall y \in \cH \}$ if $f(x) \in  \R$ and by $\partial f (x) := \emptyset$ otherwise. If the function $f$ is convex and Fr{\'e}chet differentiable at $x \in \cH $, then $ \partial f (x) = \{\nabla f (x)\} $. For the sum of a proper, convex and lower semicontinuous function $ f: \cH \to  \R \cup \{+\infty \}$ and a convex and Fr{\'e}chet differentiable function $ h: \cH \to \R $ we have $ \partial (f + h)(x) = \partial f(x) + \nabla h(x) $ for all $ x \in \cH $. The subdifferential of the \emph{indicator function} $\delta_C$ of a nonempty closed convex set $ C \subseteq \cH$,  defined as $\delta_C(x) = 0$ for $x \in C$ and $\delta_C(x) = +\infty$ otherwise, is denoted by $N_C:=\partial \delta_C$ and is called the \emph{normal cone} the set $C$.

Let $ f: \cH \to \R \cup \{ + \infty \} $ be proper, convex and lower semicontinuous. The \emph{proximal operator} of $f$ is defined by
	\begin{equation}
		\label{prox}
		\textup{prox}_{f}: \cH \to \cH, \quad \prox{f}{x} := \argmin_{y \in \cH} \left\{ f(y) + \frac{1}{2} \normsq{y - x} \right\}.
	\end{equation}
The proximal  operator of the indicator function $\delta_C$ of a nonempty closed convex set $ C \subseteq \cH$ is the \emph{orthogonal projection} $P_C : \cH \rightarrow C$ of the set $C$.

A set-valued operator $ A: \cH \rightrightarrows \cH $ is said to be \emph{monotone} if for all $ (x, u) $, $ (y, v) \in \gra A := \{ (z, w) \in \cH \times \cH \: \vert \: w \in A z \} $ we have $\sprod{x - y}{u - v} \geq 0$. Furthermore, $ A $ is said to be  \emph{maximal monotone} if it is monotone and  there exists no monotone operator $ B: \cH \rightrightarrows \cH $ such that $ \gra A \subsetneqq \gra B$. The graph of a maximal monotone operator $ A: \cH \rightrightarrows \cH $ is \emph{sequentially closed} in the \emph{strong} $\times$ \emph{weak} topology, which means that if $ (x_{k}, u_{k})_{k \geq 0} $ is a sequence in $ \gra A $ such that $ x_{k} \to x $ and $ u_{k} \rightharpoonup u$ as $k \rightarrow +\infty$, then  $ (x, u) \in \gra A$. The notation $ u_{k} \rightharpoonup u $ as $k \rightarrow +\infty$ is used to denote convergence of the sequence $(u_k)_{k \geq 0}$ to $u$ in the weak topology.

To show weak convergence of sequences in Hilbert spaces we use the following so-called \emph{Opial Lemma}.

\begin{lemma}\label{lem:opial} (Opial Lemma \cite{opial})
Let $ C \subseteq \cH $ be a nonempty set and $(x_{k})_{k \geq 0}$ a sequence in $ \cH $ such that the following two conditions hold:
	\begin{itemize}
		\item[(a)] for every $ x \in C $, $ \lim_{k \to + \infty} \norm{x_{k} - x} $ exists;

		\item[(b)] every weak sequential cluster point of $ (x_{k})_{k \geq 0} $ belongs to $C$.
	\end{itemize}
	Then $ (x_{k})_{k \geq 0} $ converges weakly to an element in $C$.
\end{lemma}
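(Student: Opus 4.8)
The plan is to reduce everything to the uniqueness of the weak sequential cluster point and then invoke boundedness together with the weak sequential compactness of bounded sets in the Hilbert space $\cH$. First I would observe that $C \neq \emptyset$ lets us fix some $x \in C$; by hypothesis (a) the sequence $(\norm{x_{k} - x})_{k \geq 0}$ converges and is therefore bounded, whence $(x_{k})_{k \geq 0}$ is bounded in $\cH$. Since $\cH$ is a Hilbert space, bounded sequences are relatively sequentially compact in the weak topology, so $(x_{k})_{k \geq 0}$ admits at least one weak sequential cluster point; by (b) every such cluster point lies in $C$.

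The crux of the argument is to show that this cluster point is unique. Suppose, to the contrary, that $(x_{k})_{k \geq 0}$ has two distinct weak sequential cluster points $\bar{x}, \tilde{x} \in C$, realised as weak limits along subsequences $x_{k_{j}} \rightharpoonup \bar{x}$ and $x_{k_{i}} \rightharpoonup \tilde{x}$. The idea is to exploit the elementary expansion
\begin{equation*}
	\normsq{x_{k} - \bar{x}} - \normsq{x_{k} - \tilde{x}} = 2 \sprod{x_{k}}{\tilde{x} - \bar{x}} + \normsq{\bar{x}} - \normsq{\tilde{x}}.
\end{equation*}
By (a) both $\norm{x_{k} - \bar{x}}$ and $\norm{x_{k} - \tilde{x}}$ converge (as $\bar{x}, \tilde{x} \in C$), so the left-hand side converges, and hence so does $(\sprod{x_{k}}{\tilde{x} - \bar{x}})_{k \geq 0}$. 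Passing to the limit along the two subsequences, and using that the full convergent sequence $(\sprod{x_{k}}{\tilde{x} - \bar{x}})_{k \geq 0}$ has a single limit, forces $\sprod{\bar{x}}{\tilde{x} - \bar{x}} = \sprod{\tilde{x}}{\tilde{x} - \bar{x}}$, i.e. $\normsq{\tilde{x} - \bar{x}} = 0$, contradicting $\bar{x} \neq \tilde{x}$.

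Finally I would conclude by the standard subsequence principle that a bounded sequence with a unique weak sequential cluster point converges weakly to it. Indeed, if $(x_{k})_{k \geq 0}$ did not converge weakly to the unique cluster point $\bar{x}$, some subsequence would remain outside a weak neighbourhood of $\bar{x}$; being bounded, it would possess a further weakly convergent subsequence, whose limit would again have to be $\bar{x}$ by uniqueness, a contradiction. Hence $(x_{k})_{k \geq 0} \rightharpoonup \bar{x}$ with $\bar{x} \in C$.

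The main obstacle is the uniqueness step: everything hinges on coupling the existence of the limits furnished by (a) with the weak convergence of the two candidate subsequences through the inner-product expansion above. This is precisely Opial's original observation and explains why condition (a) is the correct quantitative hypothesis to pair with the qualitative cluster-point condition (b); the remaining arguments (boundedness and the subsequence principle) are routine facts about the weak topology on a Hilbert space.
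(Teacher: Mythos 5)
Your proof is correct. Note that the paper itself does not prove this lemma at all --- it is stated as a known result with a citation to Opial's original 1967 paper --- so there is no in-paper argument to compare against; what you have written is precisely the classical proof. All three ingredients are sound: boundedness of $(x_{k})_{k \geq 0}$ from condition (a) applied to a single $x \in C$, uniqueness of the weak sequential cluster point via the expansion $\normsq{x_{k} - \bar{x}} - \normsq{x_{k} - \tilde{x}} = 2 \sprod{x_{k}}{\tilde{x} - \bar{x}} + \normsq{\bar{x}} - \normsq{\tilde{x}}$ (where you correctly use (b) to place both cluster points in $C$ so that (a) applies to them), and the subsequence principle to upgrade uniqueness of the cluster point to weak convergence of the whole sequence.
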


In the following definition we adjust the term \emph{proper} to the saddle point setting and refer to \cite{TR} for further considerations related to saddle functions.

\begin{definition}\label{def-rock}
	A function $ \Psi : \cH \times \cG \to \R \cup \{ \pm \infty \} $ is called a \emph{saddle function}, if $ \Psi (\ph, y) $ is convex for all $ y \in \cG $ and $ \Psi (x, \ph) $ is concave for all $ x \in \cG $. A saddle function $ \Psi $ is called \emph{proper}, if there exists $ (x', y') \in \cH \times \cG $ such that $ \Psi (x', y) < + \infty $ for all $ y \in \cG $ and $ - \infty < \Psi (x, y') $ for all $ x \in \cH $.
\end{definition}

We conclude the preliminary section with a useful result regarding the minimax objective from~\eqref{min-max}.

\begin{proposition}\label{prop:saddle-function}
The function $ \Psi: \cH \times \cG \to \R \cup \{ \pm \infty \} $ defined via~\eqref{psi} is a proper saddle function such that $ \Psi (\ph, y) $ is lower semicontinuous for each $ y \in \cG $ and $ \Psi (x, \ph) $ is upper semicontinuous for each $ x \in \cH $. Consequently, the operator
$$(x,y) \mapsto
		\partial [\Psi(\ph,y)](x)
		\times
		\partial [\minus \Psi(x, \ph)](y)$$
	is maximal monotone.
\end{proposition}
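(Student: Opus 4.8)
The plan is to verify the three separate claims in turn: that $\Psi$ is a proper saddle function, that it has the stated semicontinuity properties in each variable, and finally that the induced operator is maximal monotone. The convexity/concavity structure and properness follow directly from the standing assumptions, so the bulk of the work lies in (i) establishing the semicontinuity claims carefully in the presence of the $+\infty - (+\infty)$ convention and the three-case definition~\eqref{psi}, and (ii) invoking the appropriate result from Rockafellar's theory to pass from these properties to maximal monotonicity.

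First I would check that $\Psi$ is a saddle function. For fixed $y$, convexity of $\Psi(\ph, y)$ splits by cases: on $\{y \in \dom g\}$ we have $\Psi(\ph,y) = \Phi(\ph, y) - g(y)$, which is convex in $x$ since $\Phi(\ph,y)$ is convex and $g(y)$ is a constant, and outside we get the constant functions $-\infty$ or $+\infty$ which are trivially convex. Similarly, for fixed $x \in \pr_{\cH}(\dom \Phi)$ the map $\Psi(x,\ph) = \Phi(x,\ph) - g(\ph)$ is concave as the sum of the concave $\Phi(x,\ph)$ and the concave $-g$; for $x \notin \pr_{\cH}(\dom\Phi)$ it is the constant $+\infty$, again concave. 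Properness in the sense of Definition~\ref{def-rock} I would establish by taking a saddle point $(x^{\ast}, y^{\ast})$, whose existence is assumed: by the remark following~\eqref{def-saddle-point} we have $x^{\ast} \in \pr_{\cH}(\dom\Phi)$ and $y^{\ast} \in \dom g$, and then $\Psi(x^{\ast}, y) < +\infty$ for all $y$ (using the case distinction in~\eqref{psi}, the value is never $+\infty$ when $x^{\ast} \in \pr_{\cH}(\dom\Phi)$) and $-\infty < \Psi(x, y^{\ast})$ for all $x$ (since $y^{\ast} \in \dom g$ rules out the $-\infty$ case).

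Next I would verify the semicontinuity. Lower semicontinuity of $\Psi(\ph, y)$ for fixed $y$: on $\{y \in \dom g\}$ this is lsc because $\Phi(\ph,y)$ is lsc by assumption and subtracting the constant $g(y)$ preserves this; I must also confirm lower semicontinuity across the boundary between $\pr_{\cH}(\dom\Phi)$ and its complement, which is where the closedness of $\pr_{\cH}(\dom\Phi)$ is needed — since the set is closed, the value jumps to $+\infty$ on an open complement, compatibly with lower semicontinuity. For $y \notin \dom g$, $\Psi(\ph,y)$ takes the value $-\infty$ on the closed set $\pr_{\cH}(\dom\Phi)$ and $+\infty$ off it, and closedness again guarantees lower semicontinuity. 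Upper semicontinuity of $\Psi(x, \ph)$ for fixed $x$ is simpler: for $x \in \pr_{\cH}(\dom\Phi)$ we have $\Psi(x,\ph) = \Phi(x,\ph) - g$, where $\Phi(x,\ph)$ is real-valued and continuous (Fr\'echet differentiable, hence continuous, by assumption) and $-g$ is upper semicontinuous since $g$ is lsc; for $x \notin \pr_{\cH}(\dom\Phi)$ the constant $+\infty$ is trivially usc.

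Finally, the maximal monotonicity of the operator $(x,y) \mapsto \partial[\Psi(\ph,y)](x) \times \partial[\minus\Psi(x,\ph)](y)$ I would obtain by appealing directly to Rockafellar's theorem on the maximal monotonicity of the subdifferential of a proper, closed saddle function (the relevant result in~\cite{TR}), which applies precisely because we have just shown $\Psi$ to be a proper saddle function with the correct one-sided semicontinuity in each argument. The main obstacle, and the step I expect to require the most care, is the semicontinuity verification at the boundary of $\pr_{\cH}(\dom\Phi)$: one must treat the interaction between the three regimes in~\eqref{psi} rigorously and confirm that the closedness hypothesis is exactly what makes the one-sided semicontinuity hold, so that Rockafellar's hypotheses are genuinely met rather than merely plausible.
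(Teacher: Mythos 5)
Your route is the same as the paper's: a case analysis in each variable to verify the saddle-function, properness and semicontinuity properties of $\Psi$, followed by an appeal to Rockafellar's result in \cite{TR} on maximal monotonicity of the operator attached to a proper closed saddle function. There is, however, one step that is wrong as written. In your convexity verification you claim that for $y \notin \dom g$ the function $\Psi(\ph,y)$ is ``the constant function $-\infty$ or $+\infty$'' and hence trivially convex. It is not constant: by \eqref{psi} it equals $-\infty$ on $\pr_{\cH}(\dom \Phi)$ and $+\infty$ on the complement (you describe this correctly later, in your lower-semicontinuity step). A function taking only the values $\pm\infty$ is convex precisely when the set carrying the value $-\infty$ is convex, so this case genuinely requires the convexity of $\pr_{\cH}(\dom \Phi)$, which your argument never establishes or invokes. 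The fact is true and easy to supply: for any $y' \in \dom g$ one has $\dom \Phi(\ph,y') = \pr_{\cH}(\dom \Phi)$, because $\dom \Phi(x,\ph) = \cG$ for every $x \in \pr_{\cH}(\dom \Phi)$, and the domain of the convex function $\Phi(\ph,y')$ is convex. This is exactly the ingredient the paper uses (``since $\pr_{\cH}(\dom \Phi)$ is convex and closed''); with that one line added, your case analysis is complete and the appeal to \cite{TR} goes through as in the paper.

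A second, harmless, difference: you derive properness of $\Psi$ from the existence of a saddle point $(x^{\ast},y^{\ast})$, while the paper obtains it without that assumption, simply by picking any $x' \in \pr_{\cH}(\dom \Phi) \neq \emptyset$ (nonempty since $\dom \Phi \neq \emptyset$) and any $y' \in \dom g$ (nonempty since $g$ is proper). Your version is legitimate under the paper's standing hypothesis that a saddle point exists, but it makes the proposition depend on an assumption it does not actually need.
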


\begin{proof}
We choose $(x', y') \in \cH \times \cG $ and distinguish four cases.

Firstly, we look at the case $ y' \notin \dom g $. Then
	\begin{equation}
		\Psi(x, y') =
		\begin{cases}
			- \infty & \text{if } x \in \pr_{\cH} (\dom \Phi),\\
			+ \infty & \text{if } x \notin \pr_{\cH} (\dom \Phi),
		\end{cases}
	\end{equation}
thus $x \mapsto \Psi(x,y')$ is convex and lower semicontinuous, since $ \pr_{\cH} (\dom \Phi) $ is convex and closed. Secondly, if $ y' \in \dom g $, then $ g(y') \in \R $ and
	\begin{equation}
		\Psi(x, y') = \Phi(x,y') - g(y') \ \forall x \in \cH,
	\end{equation}
which means that $x \mapsto \Psi(x,y')$ is convex and lower semicontinuous. This proves that $ \Psi (\ph, y) $ is convex and lower semicontinuous for all $ y \in \cG $.

On the other hand, if $ x' \notin \pr_{\cH} (\dom \Phi) $, then
	\begin{equation}
		\Psi(x', y) = + \infty \ \forall y \in \cG,
	\end{equation}
	which means that $y \mapsto \Psi (x', y) $  is upper semicontinuous and concave. Finally, if $ x' \in \pr_{\cH} (\dom \Phi) $, then
	\begin{equation}
		 \Phi (x', y) \in \R \ \mbox{and} \ -\Psi(x', y) = -\Phi(x',y) + g(y) \ \forall y \in \cG.
	\end{equation}
Hence $y \mapsto -\Psi (x', y) $ is proper, convex and lower semicontinuous, and so $ y \mapsto \Psi (x', y) $ is concave and upper semicontinuous. This proves that $ \Psi (x, \ph) $ is concave and upper semicontinuous for all $ x \in \cH $.

Moreover, $ \Psi $ is a proper saddle function. By assumption we have $ g(y) > - \infty $ for all $ y \in \cG $ and there exists $ x' \in \pr_{\cH} (\dom \Phi) \neq \emptyset $ such that $ \dom \Phi (x', \ph) = \cG $. Thus
	\begin{equation}
		\Psi(x', y) = \Phi (x', y) - g(y) < + \infty \ \forall y \in \cG.
	\end{equation}
	Furthermore, by assumption there exist $ y' \in \dom g \subseteq \cG $ such that $ g(y') < + \infty $ and for all $ x \in \cH $ we have $ \Phi (x, y') > - \infty $. Hence,
	\begin{equation}
		\Psi(x, y') = \Phi (x, y') - g(y') > - \infty \ \forall x \in \cH.
	\end{equation}
The maximal monotonicity of $(x,y) \mapsto
		\partial [\Psi(\ph,y)](x)
		\times
		\partial [\minus \Psi(x, \ph)](y)$
 follows from Corollary 1 and Theorem 3 in \cite[pages 248-249]{TR}.
\end{proof}

\section{Convex-(strongly) concave setting}\label{sec:convex-strconcave}

First we will treat the case when the coupling function $ \Phi $ is convex-concave and $g$ is convex with modulus $ \nu \geq 0 $. In the case $ \nu = 0 $ this corresponds to $ \Psi(x,y) = \Phi(x,y) - g(y) $ being convex-concave, while for $ \nu > 0 $ the saddle function $ \Psi(x, y) $ is convex-strongly concave.

We will start with stating two assumptions on the step sizes of the algorithm which will be needed in the convergence analysis. These will be followed by a unified preparatory analysis for general $ \nu \geq 0 $ that will be the base to show convergence of the iterates as well as of the minimax gap. After that we will introduce a choice of parameters that satisfy the aforementioned assumptions. The section will be closed by convergence results for the convex-concave ($ \nu = 0 $) and the convex-strongly concave ($ \nu > 0 $) setting.

\begin{assumption}\label{A1}
	We assume that the step sizes $ \tau_{k} $, $ \sigma_{k} $ and the momentum parameter $ \theta_{k} $ satisfy
	\begin{equation}
		\label{stepsize-tele-nice}
		\tau_{k+1} \geq \frac{\tau_{k}}{\theta_{k+1}} \quad \mbox{and} \quad
		\sigma_{k+1} \geq \frac{\sigma_{k}}{\theta_{k+1} (1 + \nu \sigma_{k})}
		\quad
		\text{for all } k \geq 0.
	\end{equation}
	Furthermore we assume that there exist $ \delta > 0 $ and $ (\alpha_{k})_{k \geq 0} \subseteq \R_{++}$ such that
	\begin{equation}
		\label{stepsize-delta}
		\frac{1 - \delta}{\tau_{k}} \geq \frac{L_{yx}}{\alpha_{k+1}}
		\quad \mbox{and} \quad
		\frac{1 - \delta}{\sigma_{k}} \geq L_{yx} \alpha_{k} \theta_{k} + L_{yy} (1 + \theta_{k})
		\quad
		\text{for all } k \geq 0,
	\end{equation}
	where $ \theta_{0} := 1 $.
\end{assumption}

\subsection{Preliminary considerations}

In this subsection we will make some preliminary considerations that will play an important role when proving the convergence properties of the numerical scheme given by~\eqref{alg_y}-\eqref{alg_x}. For all $ k \geq 0 $ we will use the notations
\begin{equation}		\label{not_q-v}
		q_{k} := \ny \Phi(x_{k}, y_{k}) - \ny \Phi(x_{k-1}, y_{k-1}) \ \mbox{and} \ s_{k} := \theta_{k} q_{k} + \ny \Phi (x_{k}, y_{k}).
\end{equation}
We take an arbitrary $(x,y) \in \cH \times \cG$ and let $ k \geq 0$ be fixed.
From~\eqref{alg_y} we derive
\begin{equation}
	\label{opt-cond-y}
	0 \in \partial g(y_{k+1}) + \frac{1}{\sigma_{k}} (y_{k+1} - y_{k}) - s_{k},
\end{equation}
and, as $ g $ is convex with modulus $ \nu $, this implies
\begin{equation}
	\label{subdiff-g}
	\begin{split}
		g(y)
		&\geq g(y_{k+1}) + \sprod{s_{k}}{y - y_{k+1}} + \frac{1}{\sigma_{k}} \sprod{y_{k} - y_{k+1}}{y - y_{k+1}} + \frac{\nu}{2} \normsq{y - y_{k+1}}\\
		&= g(y_{k+1}) + \sprod{s_{k}}{y - y_{k+1}} + \frac{1}{2\sigma_{k}} \left(\normsq{y_{k} - y_{k+1}} + \normsq{y - y_{k+1}} - \normsq{y - y_{k}}\right) + \frac{\nu}{2} \normsq{y - y_{k+1}}.
	\end{split}
\end{equation}
From~\eqref{alg_x} we get
\begin{equation}
	\label{opt-cond-x}
	0 \in \partial [\Phi(\ph, y_{k+1})](x_{k+1}) + \frac{1}{\tau_{k}} (x_{k+1} - x_{k}),
\end{equation}
hence the convexity of $ \Phi (\ph, y) $ for $ y \in \dom g $ yields
\begin{equation}
	\label{subdiff-phi}
	\begin{split}
		\Phi(x,y_{k+1})
		& \geq \Phi(x_{k+1}, y_{k+1}) + \frac{1}{\tau_{k}} \sprod{x_{k} - x_{k+1}}{x - x_{k+1}}\\
		& = \Phi(x_{k+1}, y_{k+1}) + \frac{1}{2\tau_{k}} \left(\normsq{x_{k} - x_{k+1}} + \normsq{x - x_{k+1}} - \normsq{x - x_{k}}\right).
	\end{split}
\end{equation}
Combining~\eqref{subdiff-g} and~\eqref{subdiff-phi} we obtain
\begin{equation}
	\begin{split}
		\Psi(x_{k+1},y)-\Psi(x,y_{k+1}) &= \Phi(x_{k+1}, y) - g(y) - \Phi(x, y_{k+1}) + g(y_{k+1})\\
			&\leq \Phi(x_{k+1}, y) - \Phi(x, y_{k+1})
			+ \sprod{s_{k}}{y_{k+1} - y}
			- \frac{\nu}{2} \normsq{y - y_{k+1}}\\
				& \hspace{5mm}+ \frac{1}{2\sigma_{k}} \left( \minus \normsq{y_{k} - y_{k+1}} - \normsq{y - y_{k+1}} + \normsq{y - y_{k}} \right)\\
			&\leq \Phi(x_{k+1}, y) - \Phi(x_{k+1}, y_{k+1})
			+ \sprod{s_{k}}{y_{k+1} - y}
			- \frac{\nu}{2} \normsq{y - y_{k+1}}\\
				&\hspace{5mm}+ \frac{1}{2\tau_{k}} \left( \minus \normsq{x_{k} - x_{k+1}} - \normsq{x - x_{k+1}} + \normsq{x - x_{k}} \right)\\
				&\hspace{5mm}+ \frac{1}{2\sigma_{k}} \left( \minus \normsq{y_{k} - y_{k+1}} - \normsq{y - y_{k+1}} + \normsq{y - y_{k}} \right),
	\end{split}
\end{equation}
which, together with the concavity of $\Phi$ in the second variable and~\eqref{not_q-v}, gives
\begin{equation}
	\label{comb}
	\begin{split}
		\Psi(x_{k+1},y)-\Psi(x,y_{k+1})
			\leq & \ \theta_{k} \sprod{q_{k}}{y_{k+1}-y} - \frac{\nu}{2} \normsq{y - y_{k+1}}\\
				& - \sprod{\ny \Phi(x_{k+1}, y_{k+1})}{y_{k+1} - y} + \sprod{\ny \Phi(x_{k}, y_{k})}{y_{k+1} - y}\\
				& + \frac{1}{2\tau_{k}} \left( \minus \normsq{x_{k} - x_{k+1}} - \normsq{x - x_{k+1}} + \normsq{x - x_{k}} \right)\\
				& + \frac{1}{2\sigma_{k}} \left( \minus \normsq{y_{k} - y_{k+1}} - \normsq{y - y_{k+1}} + \normsq{y - y_{k}} \right)\\
			=& \ \minus \sprod{q_{k+1}}{y_{k+1} - y} + \theta_{k} \sprod{q_{k}}{y_{k} - y} - \frac{\nu}{2} \normsq{y - y_{k+1}}\\
				& + \frac{1}{2\tau_{k}} \left( \minus \normsq{x_{k} - x_{k+1}} - \normsq{x - x_{k+1}} + \normsq{x - x_{k}} \right)\\
				& + \frac{1}{2\sigma_{k}} \left( \minus \normsq{y_{k} - y_{k+1}} - \normsq{y - y_{k+1}} + \normsq{y - y_{k}} \right)\\
				& + \theta_{k} \sprod{q_{k}}{y_{k+1} - y_{k}}.
	\end{split}
\end{equation}
By using~\eqref{cond-Phi_y} we can evaluate the last term in the above expression as follows
\begin{equation}
	\label{estim}
	\begin{split}
	\vert \sprod{q_k}{y-y_{k}} \vert
		&\leq \norm{q_k} \, \norm{y-y_{k}} \leq \left( L_{yx} \norm{x_{k}-x_{k-1}} + L_{yy} \norm{y_{k}-y_{k-1}} \right) \norm{y-y_{k}}\\
		&\leq \frac{L_{yx}}{2} \left( \alpha_{k} \normsq{y-y_{k}} + \frac{1}{\alpha_{k}} \normsq{x_{k}-x_{k-1}} \right) + \frac{L_{yy}}{2} \left( \normsq{y-y_{k}} + \normsq{y_{k}-y_{k-1}} \right),
	\end{split}
\end{equation}
with $ \alpha_{k} > 0 $ chosen such that~\eqref{stepsize-delta} holds.

Writing~\eqref{estim} for $ y := y_{k+1} $ and combining the resulting inequality with~\eqref{comb} we derive
\begin{equation}
	\label{est-gap}
	\Psi(x_{k+1},y) - \Psi(x,y_{k+1}) \leq a_{k}(x, y) - b_{k+1}(x, y) - c_{k},
\end{equation}
where
\begin{equation}
	\begin{split}
		a_k(x, y)
		& :=
		\frac{1}{2\tau_{k}} \normsq{x-x_{k}}
		+ \frac{1}{2\sigma_{k}} \normsq{y-y_{k}}
		+ \theta_{k} \sprod{q_k}{y_{k}-y}
		+ \theta_{k} \frac{L_{yx}}{2\alpha_{k}} \normsq{x_{k}-x_{k-1}}\\
		&\hspace{5mm}+ \theta_{k} \frac{L_{yy}}{2} \normsq{y_{k}-y_{k-1}},
	\end{split}
\end{equation}
\begin{equation}
	\begin{split}
		b_{k+1}(x, y)
		& :=
		\frac{1}{2\tau_{k}} \normsq{x-x_{k+1}}
		+ \frac{1}{2} \left( \frac{1}{\sigma_{k}} + \nu \right) \normsq{y-y_{k+1}}
		+ \sprod{q_{k+1}}{y_{k+1} - y}\\
		&\hspace{5mm}+ \frac{L_{yx}}{2\alpha_{k+1}} \normsq{x_{k+1}-x_{k}}
		+ \frac{L_{yy}}{2} \normsq{y_{k+1} - y_{k}},
	\end{split}
\end{equation}
and
\begin{equation}
	\begin{split}
		c_{k}
		& :=
		\frac{1}{2} \left( \frac{1}{\tau_{k}} - \frac{L_{yx}}{\alpha_{k+1}} \right) \normsq{x_{k+1}-x_{k}}
		+ \frac{1}{2} \left( \frac{1}{\sigma_{k}} - L_{yy} - \theta_{k} (L_{yx} \alpha_{k} + L_{yy}) \right) \normsq{y_{k+1} - y_{k}}.
	\end{split}
\end{equation}

Now, let us define for all $k \geq 0$
\begin{equation}
	\label{tk}
	t_{k} := \frac{\theta_{0}}{\theta_{0} \theta_{1} \cdots \theta_{k}}
\end{equation}
and notice that
\begin{equation}
	\frac{t_{k}}{t_{k+1}} = \theta_{k+1}.
\end{equation}
Relation~\eqref{stepsize-tele-nice} from Assumption~\ref{A1} is equivalent to
\begin{equation}
	\label{stepsize-tele-ugly}
	\frac{t_{k}}{\tau_{k}} \geq \frac{t_{k+1}}{\tau_{k+1}} \quad \mbox{and} \quad
	t_{k}\left( \frac{1}{\sigma_{k}} + \nu \right) \geq \frac{t_{k+1}}{\sigma_{k+1}},
\end{equation}
which will be used in telescoping arguments in the following.

Let $ K \geq 1 $ and denote
\begin{equation}
	\label{erg-sequ}
	T_{K} := \sum_{k=0}^{K-1} t_{k},
	\hspace{5mm}
	\bar{x}_{K} := \frac{1}{T_{K}} \sum_{k=0}^{K-1} t_{k}x_{k+1},
	\hspace{5mm}
	\bar{y}_{K} := \frac{1}{T_{K}} \sum_{k=0}^{K-1} t_{k}y_{k+1}.
\end{equation}
Multiplying both sides of~\eqref{est-gap} by $ t_{k} > 0 $ as defined in~\eqref{tk}, followed by summing up the inequalities for~$ k = 0, \ldots, K-1 $ gives
\begin{equation}
	\sum_{k=0}^{K-1} t_{k} \left( \Psi(x_{k+1}, y) - \Psi(x, y_{k+1}) \right)
	\leq \sum_{k=0}^{K-1} t_{k} \left( a_{k}(x, y) - b_{k+1}(x, y) - c_{k} \right).
\end{equation}
By Jensen's inequality, as $ \Psi(\ph, y) - \Psi(x, \ph) $ is a convex function, we obtain
\begin{equation}
	T_{K} \left( \Psi(\bar{x}_{K},y) - \Psi(x,\bar{y}_{K}) \right)
	\leq \sum_{k=0}^{K-1} t_{k} \left( \Psi(x_{k+1}, y) - \Psi(x, y_{k+1}) \right),
\end{equation}
and thus
\begin{equation}
	\label{est-gap-erg-aux}
	T_{K} \left( \Psi(\bar{x}_{K},y) - \Psi(x,\bar{y}_{K}) \right)
	\leq \sum_{k=0}^{K-1} t_{k} \left( a_{k}(x, y) - b_{k+1}(x, y) - c_{k} \right).
\end{equation}
Furthermore, using~\eqref{stepsize-tele-ugly}, we get for all $k \geq 0$
\begin{equation}
	\label{akbk}
	\begin{split}
		t_{k} b_{k+1}(x, y)
		&= \frac{t_{k}}{2\tau_{k}} \normsq{x-x_{k+1}}
			+ \frac{t_{k}}{2} \left( \frac{1}{\sigma_{k}} + \nu \right) \normsq{y-y_{k+1}}
			+ t_{k} \sprod{q_{k+1}}{y_{k+1} - y}\\
			& \hspace{5mm} + t_{k} \frac{L_{yx}}{2\alpha_{k+1}} \normsq{x_{k+1}-x_{k}}
			+ t_{k} \frac{L_{yy}}{2} \normsq{y_{k+1} - y_{k}}\\
		& \geq \frac{t_{k+1}}{2\tau_{k+1}} \normsq{x-x_{k+1}}
			+ \frac{t_{k+1}}{2\sigma_{k+1}} \normsq{y-y_{k+1}}
			+ t_{k+1} \theta_{k+1} \sprod{q_{k+1}}{y_{k+1} - y}\\
			& \hspace{5mm} + t_{k+1} \theta_{k+1} \frac{L_{yx}}{2\alpha_{k+1}} \normsq{x_{k+1}-x_{k}}
			+ t_{k+1} \theta_{k+1} \frac{L_{yy}}{2} \normsq{y_{k+1} - y_{k}}\\
		& = t_{k+1} a_{k+1}(x, y).
	\end{split}
\end{equation}
Notice that by~\eqref{stepsize-delta} in Assumption~\ref{A1} there exists $ \delta > 0 $ such that for all $k \geq 0$
\begin{equation}
	\label{ck}
	c_{k} \geq \delta
	\left(
		\frac{1}{2\tau_{k}} \normsq{x_{k+1}-x_{k}} +
		\frac{1}{2\sigma_{k}} \normsq{y_{k+1}-y_{k}}
	\right)
	\geq 0.
\end{equation}
For the following recall that $ x_{-1} = x_{0} $ and $ y_{-1} = y_{0} $, which implies $ q_{0} = 0 $. By using the above two inequalities in~\eqref{est-gap-erg-aux} and writing~\eqref{estim} for $ k = K $ we obtain
\begin{equation}
	\label{est-gap-erg-1}
	\begin{split}
		\Psi(\bar{x}_{K},y) - \Psi(x,\bar{y}_{K})
			& \leq \frac{1}{T_{K}} \sum_{k=0}^{K-1} \left( t_{k} a_{k}(x, y) - t_{k+1} a_{k+1}(x, y) \right) = \frac{1}{T_{K}} \left( t_{0} a_{0}(x, y) - t_{K} a_{K}(x, y) \right)\\
		&= \frac{1}{T_{K}} \left( \frac{t_{0}}{2 \tau_{0}} \normsq{x-x_{0}} + \frac{t_{0}}{2 \sigma_{0}} \normsq{y-y_{0}} \right) - \frac{t_{K}}{T_{K}} \left( \frac{1}{2 \tau_{K}} \normsq{x-x_{K}} + \frac{1}{2 \sigma_{K}} \normsq{y-y_{K}} \right)\\
			&\hspace{5mm} - \frac{t_{K} \theta_{K}}{T_{K}} \left( \sprod{q_{K}}{y_{K}-y}
			+ \frac{L_{yx}}{2\alpha_{K}} \normsq{x_{K}-x_{K-1}}
			+ \frac{L_{yy}}{2} \normsq{y_{K}-y_{K-1}} \right)\\
		&\leq \frac{1}{T_{K}} \left( \frac{t_{0}}{2 \tau_{0}} \normsq{x-x_{0}} + \frac{t_{0}}{2 \sigma_{0}} \normsq{y-y_{0}} \right)\\
			&\hspace{5mm} - \frac{t_{K}}{T_{K}} \left( \frac{1}{2 \tau_{K}} \normsq{x-x_{K}} + \frac{1}{2} \left( \frac{1}{\sigma_{K}} - \theta_{K} ( L_{yx} \alpha_{K} + L_{yy}) \right) \normsq{y-y_{K}} \right).
	\end{split}
\end{equation}
By definition we have $ t_{0} = 1 $ and by~\eqref{stepsize-delta} that the last term of the above inequality is nonpositive, hence  the following estimate for the minimax gap function evaluated at the ergodic sequences holds
\begin{equation}
	\label{est-gap-erg-2}
	\Psi(\bar{x}_{K},y) - \Psi(x,\bar{y}_{K})
	\leq \frac{1}{T_{K}} \left( \frac{1}{2 \tau_{0}} \normsq{x-x_{0}} + \frac{1}{2 \sigma_{0}} \normsq{y-y_{0}} \right) \ \forall K \geq 1.
\end{equation}
With these considerations at hand -- in specific we want to point out~\eqref{est-gap}, \eqref{est-gap-erg-1} and~\eqref{est-gap-erg-2} -- we will be able to obtain convergence statements for the two settings $ \nu = 0 $ and $ \nu > 0 $.

\subsection{Fulfilment of step size assumptions}\label{ssec:params}
In this subsection we will investigate a particular choice of parameters to fulfil Assumption~\ref{A1} which is suitable for both cases of $ \nu = 0 $ and  $\nu > 0$.

\begin{proposition}\label{prop:stepsize}
	Let $ \nu \geq 0 $, $ c_{\alpha} > L_{yx} \geq 0 $, $ \theta_{0} = 1 $ and  $ \tau_{0}, \, \sigma_{0} > 0 $ such that
	\begin{equation}
		\label{initial-stepsizes}
		\left( c_{\alpha} L_{yx} \tau_{0} + 2 L_{yy} \right) \sigma_{0} < 1.
	\end{equation}
	We define
	\begin{equation}
		\label{stepsize-pdhg}
		\theta_{k+1} := \frac{1}{\sqrt{1 + \nu \sigma_{k}}},
			\hspace{5mm}
		\tau_{k+1} := \frac{\tau_{k}}{\theta_{k+1}},
			\hspace{5mm}
		\sigma_{k+1} := \theta_{k+1} \sigma_{k}
		\quad
		\text{for all } k \geq 0.
	\end{equation}
Then the sequence  $ (\tau_{k})_{k \geq 0} $, $ (\sigma_{k})_{k \geq 0} $ and $ (\theta_{k})_{k \geq 0} $ fulfil~\eqref{stepsize-tele-nice} in Assumption~\ref{A1} with equality and~\eqref{stepsize-delta} for
	\begin{equation}
		\label{alpha}
		\alpha_{k} :=
		\left\{
			\begin{array}{ll}
				c_{\alpha} \tau_{0} & \text{if } k = 0, \vspace{2mm}\\
				c_{\alpha} \tau_{k-1} & \text{if } k \geq 1,
			\end{array}
		\right.
	\end{equation}
and
	\begin{equation}
		\label{delta}
		\delta := \min
		\left\{
			1 - \frac{L_{yx}}{c_{\alpha}}, \,
			1 - \left( c_{\alpha} L_{yx} \tau_{0} + 2 L_{yy} \right) \sigma_{0}
		\right\} > 0.
	\end{equation}
	Furthermore, for $(t_k)_{k \geq 0}$ defined as in ~\eqref{tk} we have
	\begin{equation}
		\label{tk-pdhg}
		t_{k} = \frac{\theta_{0}}{\theta_{0} \theta_{1} \cdots \theta_{k}} =  \frac{\tau_{k}}{\tau_{0}} \quad \forall k \geq 0.
	\end{equation}
\end{proposition}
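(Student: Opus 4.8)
The plan is to establish the three claims of the proposition in turn: that~\eqref{stepsize-tele-nice} holds with equality, that~\eqref{stepsize-delta} holds for the stated $\alpha_k$ and $\delta$, and finally the identity~\eqref{tk-pdhg}. The unifying tool, which I would extract first, is an invariance of the product $\tau_k \sigma_k$ that reduces the second inequality of~\eqref{stepsize-delta} to the single scalar condition~\eqref{initial-stepsizes}.

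First I would dispatch~\eqref{stepsize-tele-nice}. The relation $\tau_{k+1} \geq \tau_k / \theta_{k+1}$ holds with equality directly from the definition $\tau_{k+1} := \tau_k/\theta_{k+1}$ in~\eqref{stepsize-pdhg}. For the $\sigma$-relation, since $\sigma_{k+1} = \theta_{k+1}\sigma_k$, the desired equality $\sigma_{k+1} = \sigma_k/[\theta_{k+1}(1+\nu\sigma_k)]$ is equivalent to $\theta_{k+1}^2(1+\nu\sigma_k) = 1$, which is immediate from $\theta_{k+1} = 1/\sqrt{1+\nu\sigma_k}$. Next comes the key step: multiplying the two recursions gives $\tau_{k+1}\sigma_{k+1} = (\tau_k/\theta_{k+1})(\theta_{k+1}\sigma_k) = \tau_k\sigma_k$, so $\tau_k\sigma_k = \tau_0\sigma_0$ for all $k$. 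I would also record the elementary facts $\theta_k \in (0,1]$ (hence $\theta_k^2 \leq \theta_k \leq 1$), $\sigma_{k+1} = \theta_{k+1}\sigma_k \leq \sigma_k$ so that $\sigma_k \leq \sigma_0$, and $\tau_{k-1} = \theta_k\tau_k$ for $k \geq 1$.

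To verify~\eqref{stepsize-delta}, note $\alpha_{k+1} = c_\alpha\tau_k$ by~\eqref{alpha}, so the first inequality reads $(1-\delta)/\tau_k \geq L_{yx}/(c_\alpha\tau_k)$, i.e. $\delta \leq 1 - L_{yx}/c_\alpha$, which holds by the definition~\eqref{delta} of $\delta$. For the second, I would multiply by $\sigma_k$ and bound $\sigma_k\bigl(L_{yx}\alpha_k\theta_k + L_{yy}(1+\theta_k)\bigr)$. For $k=0$, with $\alpha_0 = c_\alpha\tau_0$ and $\theta_0=1$, this equals exactly $(c_\alpha L_{yx}\tau_0 + 2L_{yy})\sigma_0 \leq 1-\delta$ by the second entry of the minimum in~\eqref{delta}. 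For $k \geq 1$, with $\alpha_k = c_\alpha\tau_{k-1}$, I use $\sigma_k\tau_{k-1} = \theta_k(\sigma_k\tau_k) = \theta_k\tau_0\sigma_0$ from the invariant together with $\theta_k^2 \leq 1$ and $\sigma_k \leq \sigma_0$ to bound the expression by $L_{yx}c_\alpha\tau_0\sigma_0 + 2L_{yy}\sigma_0 = (c_\alpha L_{yx}\tau_0 + 2L_{yy})\sigma_0 \leq 1-\delta$ once more. Positivity $\delta > 0$ follows since $c_\alpha > L_{yx}$ makes the first term of the minimum positive and~\eqref{initial-stepsizes} makes the second positive.

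It remains to show~\eqref{tk-pdhg}, which I would prove by induction: $t_0 = \theta_0/\theta_0 = 1 = \tau_0/\tau_0$, and assuming $t_k = \tau_k/\tau_0$, the identity $t_{k+1} = t_k/\theta_{k+1}$ together with $\tau_{k+1} = \tau_k/\theta_{k+1}$ yields $t_{k+1} = \tau_k/(\tau_0\theta_{k+1}) = \tau_{k+1}/\tau_0$. The only mildly delicate point is the second inequality of~\eqref{stepsize-delta} for $k \geq 1$; everything else is routine substitution. Once the invariant $\tau_k\sigma_k \equiv \tau_0\sigma_0$ and the monotonicity $\sigma_k \leq \sigma_0$ are isolated, that step also collapses to the initial condition~\eqref{initial-stepsizes}, so I expect no genuine obstacle.
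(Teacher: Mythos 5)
Your proposal is correct and follows essentially the same route as the paper: both verify~\eqref{stepsize-tele-nice} by direct substitution, reduce the first inequality of~\eqref{stepsize-delta} to $\delta \leq 1 - L_{yx}/c_{\alpha}$, and reduce the second to the initial condition~\eqref{initial-stepsizes} via the invariant $\tau_{k}\sigma_{k} = \tau_{0}\sigma_{0}$ together with $\theta_{k} \leq 1$, $\sigma_{k} \leq \sigma_{0}$ and $\theta_{k}\tau_{k} = \tau_{k-1}$. Your induction for~\eqref{tk-pdhg} is just a restatement of the paper's telescoping product, so there is no substantive difference.
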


\begin{proof}
	First, we show that the particular choice~\eqref{stepsize-pdhg} fulfils~\eqref{stepsize-tele-nice} in Assumption~\ref{A1} with equality.
	We see that for all $ k \geq 0 $
	\begin{equation}
	\tau_{k+1} = \frac{\tau_{k}}{\theta_{k+1}},
	\end{equation}
	as well as
	\begin{equation}
	\sigma_{k+1}
	= \theta_{k+1} \sigma_{k}
	= \frac{\sigma_{k}}{\theta_{k+1} \frac{1}{\theta_{k+1}^{2}}}
	= \frac{\sigma_{k}}{\theta_{k+1} (1 + \nu \sigma_{k})},
	\end{equation}
	follow straight forward by definition.

	Next, we show that~\eqref{stepsize-delta} in Assumption~\ref{A1} holds for $\delta$ defined in \eqref{delta}
	with the choices~\eqref{stepsize-pdhg} and~\eqref{alpha}.
	The first inequality of~\eqref{stepsize-delta} is equivalent to
	\begin{equation}
		1 - \delta \geq \frac{L_{yx}}{\alpha_{k+1}} \tau_{k} = \frac{L_{yx}}{c_{\alpha}} \quad \forall k \geq 0,
	\end{equation}
	which clearly is fulfilled as
	\begin{equation}
		\delta \leq 1 - \frac{L_{yx}}{c_{\alpha}}.
	\end{equation}
	On the other hand, the second inequality of~\eqref{stepsize-delta} is equivalent to
	\begin{equation}
		1 - \delta
		\geq L_{yx} \alpha_{k} \theta_{k} \sigma_{k} + L_{yy} (1 + \theta_{k}) \sigma_{k} \quad \forall k \geq 0.
	\end{equation}
	By definition of the step size parameters~\eqref{stepsize-pdhg} we have for all $k \geq 0$
	\begin{equation}
		\tau_{k+1} \sigma_{k+1} = \tau_{0} \sigma_{0},
		\hspace{5mm}
		\theta_{k+1} \leq 1 = \theta_{0},
		\hspace{5mm}
		\sigma_{k+1} \leq \sigma_{0},
		\hspace{5mm}
		\theta_{k+1} \tau_{k+1} = \tau_{k},
	\end{equation}
	and thus
	\begin{equation}
		\begin{split}
			1 - \delta
			&\geq L_{yx} \alpha_{0} \theta_{0} \sigma_{0} + L_{yy} (1 + \theta_{0}) \sigma_{0}
			= c_{\alpha} L_{yx} \tau_{0} \sigma_{0} + 2 L_{yy} \sigma_{0} \geq c_{\alpha} L_{yx} \theta_{k+1}^{2} \tau_{k+1} \sigma_{k+1} + L_{yy} (1 + \theta_{k+1}) \sigma_{k+1}\\ &= L_{yx} c_{\alpha} \tau_{k} \theta_{k+1} \sigma_{k+1} + L_{yy} (1 + \theta_{k+1}) \sigma_{k+1} = L_{yx} \alpha_{k+1} \theta_{k+1} \sigma_{k+1} + L_{yy} (1 + \theta_{k+1}) \sigma_{k+1}.
		\end{split}
	\end{equation}
	This chain of inequalities holds since
	\begin{equation}
		\delta
		\leq 1 - \left( c_{\alpha} L_{yx} \tau_{0} + 2 L_{yy} \right) \sigma_{0}.
	\end{equation}
Finally, using the definition of $ t_{k} $ and~\eqref{stepsize-pdhg} we conclude that for all $ k \geq 0 $
	\begin{equation}
		t_{k}
		= \frac{\theta_{0}}{\theta_{0} \theta_{1} \cdots \theta_{k}}
		= \frac{\frac{\tau_{0}}{\tau_{0}}}{ \frac{\tau_{0}}{\tau_{0}} \frac{\tau_{0}}{\tau_{1}} \cdots \frac{\tau_{k-1}}{\tau_{k}} }
		= \frac{\tau_{k}}{\tau_{0}}.
	\end{equation}
\end{proof}

\begin{remark}
The choice $L_{yy} = 0$ in \eqref{cond-Phi_y} which was considered in \cite{Aybat} in the convex-strongly concave setting corresponds to the case when the coupling function $\Phi$ is linear in $y$. We will prove convergence also for $L_{yy}$ positive, which makes our algorithm applicable to a much wider range of problems, as we will see in the section with the numerical experiments.

When the coupling function $ \Phi: \cH \times \cG \to \R $ is bilinear, that is $ \Phi (x, y) = \sprod{y}{Ax} $ for some nonzero continuous linear operator $ A : \cH \to \cG $  then we are in the setting of~\cite{PDHG}. In this situation one can choose $ L_{yy} = 0 $ and $L_{yx} = \|A\|$, and  \eqref{delta} yields
	\begin{equation}
		\label{delta-Lyy0}
		\delta = \min
		\left\{
			1 - \frac{\|A\|}{c_{\alpha}}, \,
			1 - c_{\alpha} \|A\| \tau_{0} \sigma_{0}
		\right\},
	\end{equation}
	with $ c_{\alpha} > \|A\| $.
	To guarantee $ \delta > 0 $ we fix $ 0 < \varepsilon < 1 $ and set
	\begin{equation}
		c_{\alpha} = (1 - \varepsilon)^{-1} \|A\|.
	\end{equation}
	Hence, we need to satisfy
	\begin{equation}
		\tau_{0} \sigma_{0}\|A\|^{2} < 1 - \varepsilon,
	\end{equation}
	which heavily resembles the step size condition of~\cite[Algorithm~2]{PDHG}. Since $\prox{\gamma \Phi(\cdot, y)}{x} = x - \gamma A^*y$ for all $(x,y)\in \cH \times \cG$ and all $\gamma > 0$, our OGAProx scheme becomes the primal-dual algorithm PDHG from \cite{PDHG}.

\end{remark}

\subsection{Convergence results}
In this subsection we combine the preliminary considerations with the choice of parameters~\eqref{stepsize-pdhg} from Proposition~\ref{prop:stepsize}.

We will start with the case $ \nu = 0 $ and constant step sizes, which gives weak convergence of the iterates to a saddle point $ (x^{\ast}, y^{\ast}) $ and convergence of the minimax gap evaluated at the ergodic iterates to zero like $ \mathcal{O}(\frac{1}{K}) $.  Afterwards we will consider the case $ \nu > 0 $, which leads to an accelerated version of the algorithm with improved convergence results. In this setting we obtain convergence of $ (y_{k})_{k \geq 0} $ to $ y^{\ast} $ like  $ \mathcal{O}(\frac{1}{K}) $ and convergence of the minimax gap evaluated at the ergodic iterates to zero like $ \mathcal{O}(\frac{1}{K^{2}}) $.

\subsubsection{Convex-concave setting}\label{convex-concave-setting}
For the following we assume that the function $ g $ is convex with modulus $ \nu = 0 $, meaning it is merely convex. Using the results of the previous subsection we will show that with the choice~\eqref{stepsize-pdhg} all the parameters are constant.

\begin{proposition}\label{cor:stepsize-convex-concave}
	Let $ c_{\alpha} > L_{yx} \geq 0 $ and $ \tau, \, \sigma > 0 $ such that
	\begin{equation}
		\left( c_{\alpha} L_{yx} \tau + 2 L_{yy} \right) \sigma < 1.
	\end{equation}
	If $ \nu = 0 $, then the sequences $ (\tau_{k})_{k \geq 0} $, $ (\sigma_{k})_{k \geq 0} $ and $ (\theta_{k})_{k \geq 0} $ as defined in Proposition~\ref{prop:stepsize} are constant, in particular we have
	\begin{equation}
		\label{const-param}
		\tau_{k} = \tau_{0} := \tau,
			\hspace{5mm}
		\sigma_{k} = \sigma_{0} := \sigma,
			\hspace{5mm}
		\theta_{k} = \theta_{0} = 1
		\quad
		\text{for all } k \geq 0.
	\end{equation}
\end{proposition}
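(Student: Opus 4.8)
The plan is to substitute $\nu = 0$ directly into the recursive definitions~\eqref{stepsize-pdhg} and then argue by a one-line induction. The crucial observation is that when $\nu = 0$ the square root in the momentum parameter trivialises: $\theta_{k+1} = \frac{1}{\sqrt{1 + \nu \sigma_{k}}} = \frac{1}{\sqrt{1 + 0 \cdot \sigma_{k}}} = 1$ for every $k \geq 0$. Combined with the convention $\theta_{0} = 1$, this already establishes that $\theta_{k} = 1$ for all $k \geq 0$, which is the heart of the matter.

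Once all momentum parameters are equal to one, the remaining two recursions collapse. First I would note that $\tau_{k+1} = \tau_{k} / \theta_{k+1} = \tau_{k}$ and $\sigma_{k+1} = \theta_{k+1} \sigma_{k} = \sigma_{k}$ for all $k \geq 0$; a trivial induction anchored at $\tau_{0} = \tau$ and $\sigma_{0} = \sigma$ then yields $\tau_{k} = \tau$ and $\sigma_{k} = \sigma$ for all $k \geq 0$. This is exactly the assertion~\eqref{const-param}.

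I do not expect any genuine obstacle here, since the statement is simply the specialisation of Proposition~\ref{prop:stepsize} to $\nu = 0$. The only point worth checking beyond the computation above is that the present hypotheses are a legitimate instance of those of Proposition~\ref{prop:stepsize}, so that its conclusions --- in particular the fulfilment of Assumption~\ref{A1} --- carry over. This is immediate: the standing condition $\left( c_{\alpha} L_{yx} \tau + 2 L_{yy} \right) \sigma < 1$ is precisely~\eqref{initial-stepsizes} read with $\tau_{0} = \tau$ and $\sigma_{0} = \sigma$, and the constraint $c_{\alpha} > L_{yx} \geq 0$ is assumed verbatim. Hence Proposition~\ref{prop:stepsize} applies, and the constancy of the three sequences follows from the elementary observations above.
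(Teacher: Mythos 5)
Your proposal is correct and follows essentially the same route as the paper: substituting $\nu = 0$ into the recursions~\eqref{stepsize-pdhg} to get $\theta_{k+1} = 1$, whence $\tau_{k+1} = \tau_{k}$ and $\sigma_{k+1} = \sigma_{k}$ by induction. Your additional remark that the hypotheses here are a verbatim instance of those in Proposition~\ref{prop:stepsize} (so Assumption~\ref{A1} is indeed fulfilled) is a sensible extra check, but the core argument coincides with the paper's one-line computation.
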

\begin{proof}
	As $ \nu = 0 $, \eqref{stepsize-pdhg} gives for all $ k \geq 0 $
	\begin{equation}
		\theta_{k+1} = \frac{1}{\sqrt{1 + \nu \sigma_{k}}} = 1,
		\hspace{5mm}
		\tau_{k+1} = \frac{\tau_{k}}{\theta_{k+1}} = \tau_{0},
		\hspace{5mm}
		\sigma_{k+1} = \theta_{k+1} \sigma_{k} = \sigma_{0}.
	\end{equation}
\end{proof}

Next we will state and prove the convergence results in the convex-concave case.
\begin{theorem}\label{convex-concave}
	Let $ c_{\alpha} > L_{yx} \geq 0 $ and $ \tau, \, \sigma > 0 $ such that
	\begin{equation}
		\left( c_{\alpha} L_{yx} \tau + 2 L_{yy} \right) \sigma < 1.
	\end{equation}
	Then the sequence $ (x_{k}, y_{k})_{k \geq 0} $ generated by OGAProx with the choice of constant parameters as in Proposition \ref{cor:stepsize-convex-concave}, namely,
	\begin{equation}
		\tau_{k} = \tau_{0} := \tau,
			\hspace{5mm}
		\sigma_{k} = \sigma_{0} := \sigma,
			\hspace{5mm}
		\theta_{k} = \theta_{0} = 1
		\quad
		\text{for all } k \geq 0,
	\end{equation}
	converges weakly to a saddle point $ (x^{\ast},y^{\ast}) \in \cH \times \cG $ of~\eqref{min-max}. Furthermore, let $ K \geq 1 $ and denote
	\begin{equation}
		\bar{x}_{K} = \frac{1}{K} \sum_{k=0}^{K-1} x_{k+1} \quad \mbox{and} \quad
		\bar{y}_{K} = \frac{1}{K} \sum_{k=0}^{K-1} y_{k+1}.
	\end{equation}
	Then for all $ K \geq 1 $ and any saddle point $ (x^{\ast},y^{\ast}) \in \cH \times \cG $ of~\eqref{min-max} we have
	\begin{equation}
		0
		\leq \Psi(\bar{x}_{K},y^{\ast}) - \Psi(x^{\ast},\bar{y}_{K})
		\leq \frac{1}{K} \left( \frac{1}{2 \tau_{0}} \normsq{x^{\ast}-x_{0}} + \frac{1}{2 \sigma_{0}} \normsq{y^{\ast}-y_{0}} \right).
	\end{equation}
\end{theorem}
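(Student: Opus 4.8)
The plan is to separate the statement into the ergodic rate, which is essentially already contained in the preliminary considerations, and the weak convergence, which requires setting up Opial's Lemma \ref{lem:opial}. For the ergodic estimate I would simply specialise \eqref{est-gap-erg-2} to the present constant-parameter regime: by Proposition \ref{cor:stepsize-convex-concave} we have $\theta_k = 1$ for all $k$, so $t_k = 1$ and hence $T_K = K$. Evaluating \eqref{est-gap-erg-2} at an arbitrary saddle point $(x,y) = (x^{\ast}, y^{\ast})$ then yields exactly the claimed upper bound $\frac{1}{K}\big(\frac{1}{2\tau_0}\normsq{x^{\ast} - x_0} + \frac{1}{2\sigma_0}\normsq{y^{\ast} - y_0}\big)$, while the lower bound $0 \leq \Psi(\bar x_K, y^{\ast}) - \Psi(x^{\ast}, \bar y_K)$ is immediate from \eqref{def-saddle-point}, since $\Psi(x^{\ast}, \bar y_K) \leq \Psi(x^{\ast}, y^{\ast}) \leq \Psi(\bar x_K, y^{\ast})$.

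The substance is the weak convergence, for which I would first produce a Fej\'er-type inequality for $a_k(x^{\ast}, y^{\ast})$. Taking $(x,y) = (x^{\ast}, y^{\ast})$ in \eqref{est-gap} and using that the left-hand side $\Psi(x_{k+1}, y^{\ast}) - \Psi(x^{\ast}, y_{k+1})$ is nonnegative by \eqref{def-saddle-point}, I obtain $b_{k+1}(x^{\ast},y^{\ast}) + c_k \leq a_k(x^{\ast},y^{\ast})$; combined with \eqref{akbk}, which for $t_k \equiv 1$ reads $b_{k+1} \geq a_{k+1}$, this gives $a_{k+1}(x^{\ast},y^{\ast}) \leq a_k(x^{\ast},y^{\ast}) - c_k \leq a_k(x^{\ast},y^{\ast})$. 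Next I would show $a_k(x^{\ast},y^{\ast}) \geq 0$: bounding the cross term $\sprod{q_k}{y_k - y^{\ast}}$ by \eqref{estim} and invoking the second inequality of \eqref{stepsize-delta} with $\theta_k = 1$, the $\normsq{x_k - x_{k-1}}$ and $\normsq{y_k - y_{k-1}}$ contributions cancel and one is left with $a_k(x^{\ast},y^{\ast}) \geq \frac{1}{2\tau}\normsq{x^{\ast} - x_k} + \frac{\delta}{2\sigma}\normsq{y^{\ast} - y_k} \geq 0$. Hence $(a_k(x^{\ast},y^{\ast}))_k$ is nonincreasing and bounded below, so it converges, and telescoping $a_{k+1} \leq a_k - c_k$ shows $\sum_k c_k < +\infty$.

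Condition (a) of Opial then follows: by \eqref{ck} the summability of $(c_k)$ forces $\norm{x_{k+1} - x_k} \to 0$ and $\norm{y_{k+1} - y_k} \to 0$, and through \eqref{cond-Phi_y} and \eqref{not_q-v} also $\norm{q_k} \to 0$. Since $(y_k)_{k\geq 0}$ is bounded, the extra terms $\sprod{q_k}{y_k - y^{\ast}} + \frac{L_{yx}}{2\alpha_k}\normsq{x_k - x_{k-1}} + \frac{L_{yy}}{2}\normsq{y_k - y_{k-1}}$ in $a_k(x^{\ast},y^{\ast})$ all vanish as $k \to +\infty$. Therefore $\frac{1}{2\tau}\normsq{x^{\ast} - x_k} + \frac{1}{2\sigma}\normsq{y^{\ast} - y_k}$ converges, which is precisely the existence of $\lim_k \norm{(x_k,y_k) - (x^{\ast},y^{\ast})}$ in the product space equipped with the equivalent norm $\normsq{(x,y)} = \frac{1}{\tau}\normsq{x} + \frac{1}{\sigma}\normsq{y}$.

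For condition (b) I would exploit the optimality conditions of the scheme together with Proposition \ref{prop:saddle-function}. Rewriting \eqref{opt-cond-x} gives $\frac{1}{\tau}(x_k - x_{k+1}) \in \partial[\Psi(\ph, y_{k+1})](x_{k+1})$, and subtracting $\ny \Phi(x_{k+1}, y_{k+1})$ in \eqref{opt-cond-y} gives $q_k - q_{k+1} - \frac{1}{\sigma}(y_{k+1} - y_k) \in \partial[\minus\Psi(x_{k+1}, \ph)](y_{k+1})$. Thus $(x_{k+1}, y_{k+1})$ lies in the domain of the maximal monotone operator $T$ of Proposition \ref{prop:saddle-function} with an image element converging strongly to $0$, by $\norm{q_k}, \norm{y_{k+1} - y_k} \to 0$. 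Given a subsequence with $(x_{k_j}, y_{k_j}) \rightharpoonup (\bar x, \bar y)$, the shifted sequence $(x_{k_j + 1}, y_{k_j+1})$ also converges weakly to $(\bar x, \bar y)$, and passing to the limit in the defining monotonicity inequality for $T$ (weak in the argument, strong in the image) yields $0 \in T(\bar x, \bar y)$, i.e. $(\bar x, \bar y)$ is a saddle point; Lemma \ref{lem:opial} then delivers weak convergence of $(x_k, y_k)_{k \geq 0}$ to a saddle point. I expect the main obstacle to be this final step, namely identifying the graph element from the algorithmic optimality conditions and justifying the passage to the limit, since the relevant closedness is in the weak-argument/strong-image topology rather than the strong-argument/weak-image one recorded in the preliminaries, so it must be argued directly from monotonicity and maximality of $T$; a secondary care point is the sign bookkeeping when bounding the cross term to establish $a_k(x^{\ast},y^{\ast}) \geq 0$.
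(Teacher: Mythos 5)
Your proposal is correct and follows essentially the same route as the paper: the ergodic bound comes from specialising \eqref{est-gap-erg-2} with $t_k = 1$, $T_K = K$; weak convergence comes from the Fej\'er-type inequality $a_{k+1}(x^{\ast},y^{\ast}) \leq a_k(x^{\ast},y^{\ast}) - c_k$ obtained from \eqref{est-gap} together with nonnegativity of $a_k(x^{\ast},y^{\ast})$ via \eqref{estim} and \eqref{stepsize-delta}, followed by Opial's lemma with weak cluster points identified as saddle points through the inclusions \eqref{opt-cond-x}--\eqref{opt-cond-y} and the maximal monotone operator of Proposition~\ref{prop:saddle-function}. Your direct monotonicity-plus-maximality argument for passing to the limit with weakly convergent arguments and strongly convergent images is precisely the right justification of the final step: the paper instead cites graph closedness, which its preliminaries record only for the strong $\times$ weak case, whereas the weak $\times$ strong variant (which also holds for maximal monotone operators, e.g.\ by applying the cited fact to the inverse operator) is what is actually needed, so your extra care resolves a minor imprecision rather than constituting a different approach.
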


\begin{proof}
	First we will show weak convergence of the sequence of iterates $ (x_{k}, y_{k})_{k \geq 0} $ to some saddle point $ (x^{\ast},y^{\ast}) \in \cH \times \cG $ of~\eqref{min-max}. For this we will use the Opial Lemma (see Lemma~\ref{lem:opial}).

	Let $ k \geq 0 $ and $ (x^{\ast},y^{\ast}) \in \cH \times \cG $ be an arbitrary but fixed saddle point. From~\eqref{est-gap} together with the choice~\eqref{const-param} of constant parameters $ \theta_{k} = 1 $, $ \tau_{k} = \tau $, $ \sigma_{k} = \sigma $ and $ \alpha_{k} = \alpha $ we obtain
	\begin{equation}
		\label{est-gap-sp}
		\begin{split}
			0
			\leq \Psi(x_{k+1}, y^{\ast}) - \Psi(x^{\ast}, y_{k+1})
			\leq a_{k}(x^{\ast}, y^{\ast}) - b_{k+1}(x^{\ast}, y^{\ast}) - c_{k} = a_{k}(x^{\ast}, y^{\ast}) - a_{k+1}(x^{\ast}, y^{\ast}) - c_{k},
		\end{split}
	\end{equation}
	since
	\begin{equation}
		\label{astar}
		\begin{split}
			a_{k}(x^{\ast},y^{\ast})
			& =
			\frac{1}{2\tau} \normsq{x^{\ast} - x_{k}}
			+ \frac{1}{2\sigma} \normsq{y^{\ast} - y_{k}}
			+ \sprod{q_k}{y_{k} - y^{\ast}}
			+ \frac{L_{yx}}{2\alpha} \normsq{x_{k} - x_{k-1}} + \frac{L_{yy}}{2} \normsq{y_{k} - y_{k-1}}\\
			&= b_{k}(x^{\ast}, y^{\ast}),
		\end{split}
	\end{equation}
	and
	\begin{equation}
		\begin{split}
			c_{k}
			& =
			\frac{1}{2} \left( \frac{1}{\tau} - \frac{L_{yx}}{\alpha} \right) \normsq{x_{k+1} - x_{k}}
			+ \frac{1}{2} \left( \frac{1}{\sigma} - L_{yy} - (L_{yx} \alpha + L_{yy} ) \right) \normsq{y_{k+1} - y_{k}}.
		\end{split}
	\end{equation}
	We see that~\eqref{astar}, writing~\eqref{estim} with $ y = y^{\ast} $ and~\eqref{stepsize-tele-nice} in Assumption~\ref{A1} yield
	\begin{equation}
		\label{astar-nneg}
		a_{k}(x^{\ast}, y^{\ast})
		\geq \frac{1}{2\tau} \normsq{x^{\ast} - x_{k}}
			+ \frac{1}{2\sigma} \left( 1 - \sigma ( L_{yx} \alpha + L_{yy} ) \right) \normsq{y^{\ast} - y_{k}}
		\geq 0.
	\end{equation}
	Furthermore, from~\eqref{est-gap-sp} and~\eqref{ck} we deduce
	\begin{equation}
		a_{k}(x^{\ast}, y^{\ast}) \geq
		a_{k+1}(x^{\ast}, y^{\ast})
		+ \delta
		\left(
			\frac{1}{2\tau} \normsq{x_{k+1} - x_{k}} +
			\frac{1}{2\sigma} \normsq{y_{k+1} - y_{k}}
		\right).
	\end{equation}
	Telescoping this inequality and taking into account~\eqref{astar-nneg} give
	\begin{equation}
		\label{x-x}
		\lim_{k \to + \infty} (x_{k+1} - x_{k})
		= \lim_{k \to + \infty} (y_{k+1} - y_{k})
		= 0,
	\end{equation}
	as well as the existence of the limit $\lim_{k \to + \infty} a_{k}(x^{\ast},y^{\ast}) \in \R$.

	From~\eqref{astar-nneg} we get that $ (x_{k})_{k \geq 0} $ and $ (y_{k})_{k \geq 0} $ are bounded sequences. Moreover, by using~\eqref{cond-Phi_y} and~\eqref{x-x} in definition~\eqref{not_q-v} we obtain that
	\begin{equation}
		\label{q_k-0}
		(q_k)_{k \geq 0} \mbox{ converges strongly to } 0.
	\end{equation}
	From the definition of $ a_{k}(x^{\ast}, y^{\ast}) $ in~\eqref{astar},~\eqref{x-x} and~\eqref{q_k-0} we derive that
	\begin{equation}
		\exists \lim_{k \to + \infty} \left( \frac{1}{2\tau} \normsq{x_{k}-x^{\ast}} + \frac{1}{2\sigma} \normsq{y_{k}-y^{\ast}} \right) \in \R.
	\end{equation}
	Since this is true for an arbitrary saddle point $(x^{\ast},y^{\ast}) \in \cH \times \cG$, we have that the first statement of the Opial Lemma holds.

	Next we will show that all weak cluster points of $ (x_{k},y_{k})_{k \geq 0} $ are in fact saddle points of~\eqref{min-max}. Assume that $ (x_{k_n})_{n \geq 0} $ converges weakly to $ x^{\ast} \in \cH $ and $ (y_{k_n})_{n \geq 0} $ converges weakly to $ y^{\ast} \in \cG $ as $ n \to + \infty $.
	From~\eqref{opt-cond-x},~\eqref{not_q-v} and~\eqref{opt-cond-y} we have
	\begin{equation}
		\label{incl}
		\begin{split}
			&\left( \frac{1}{\tau}(x_{k_n} - x_{k_n+1}),\frac{1}{\sigma}(y_{k_n} - y_{k_n+1}) + q_{k_n} - q_{k_n+1} \right)\\
			&\hspace{2cm} \in
			\partial [\Phi(\ph,y_{k_n+1})](x_{k_n+1}) \times \left( \minus \ny \Phi(x_{k_n+1},y_{k_n+1}) + \partial g(y_{k_n+1}) \right)\\
			&\hspace{2cm} =
			\partial [\Psi(\ph,y_{k_n+1})](x_{k_n+1}) \times \partial [\minus \Psi(x_{k_n+1}, \ph)](y_{k_n+1}),
		\end{split}
	\end{equation}
	where we used that for all $ k \geq 0 $ we have $ x_{k} \in \pr_{\cH} (\dom \Phi) $ and $ y_{k} \in \dom g $.
	The sequence on the left hand side of the inclusion~\eqref{incl} converges strongly to $(0,0)$ as $n\to+\infty$ (according to~\eqref{x-x} and~\eqref{q_k-0}). Notice that the operator
	$(x,y) \mapsto
		\partial [\Psi(\ph,y)](x)
		\times
		\partial [\minus \Psi(x, \ph)](y)$
	is maximal monotone (see Proposition~\ref{prop:saddle-function}), hence its graph
	is sequentially closed with respect to the strong $\times$ weak topology. From here we deduce
	\begin{equation}
		(0,0) \in
		\partial [\Psi(\ph,y^{\ast})](x^{\ast})
		\times
		\partial [\minus \Psi(x^{\ast}, \ph)](y^{\ast}),
	\end{equation}
	from which we easily derive that $(x^{\ast},y^{\ast})$ is a saddle point as it satisfies~\eqref{def-saddle-point}. This means that also the second statement of the Opial Lemma is fulfilled and we have weak convergence of $ (x_{k}, y_{k})_{k \geq 0} $ to a saddle point $ (x^{\ast}, y^{\ast}) $.

The remaining part is to show the convergence rate of the minimax gap of the ergodic sequences. Let $ K \geq 1 $ and $ (x^{\ast},y^{\ast}) \in \cH \times \cG $ be an arbitrary but fixed saddle point. Writing~\eqref{est-gap-erg-2} for $ (x^{\ast}, y^{\ast}) $ yields
	\begin{equation}
		0
		\leq \Psi(\bar{x}_{K},y^{\ast}) - \Psi(x^{\ast},\bar{y}_{K})
		\leq \frac{1}{T_{K}} \left( \frac{1}{2 \tau} \normsq{x^{\ast}-x_{0}} + \frac{1}{2 \sigma} \normsq{y^{\ast}-y_{0}} \right),
	\end{equation}
	with
	\begin{equation}
		T_{K} = \sum_{k=0}^{K-1} t_{k},
			\hspace{5mm}
		\bar{x}_{K} = \frac{1}{T_{K}} \sum_{k=0}^{K-1} t_{k}x_{k+1},
			\hspace{5mm}
		\bar{y}_{K} = \frac{1}{T_{K}} \sum_{k=0}^{K-1} t_{k}y_{k+1}.
	\end{equation}
	Using~\eqref{tk-pdhg} to get $ t_{k} = 1 $ for all $ k \geq 0 $ in the above expressions gives
	\begin{equation}
		T_{K}
		= \sum_{k=0}^{K-1} t_{k}
		= K,
			\hspace{5mm}
		\bar{x}_{K} = \frac{1}{K} \sum_{k=0}^{K-1} x_{k+1},
			\hspace{5mm}
		\bar{y}_{K} = \frac{1}{K} \sum_{k=0}^{K-1} y_{k+1}.
	\end{equation}
	Finally we derive for all $ K \geq 1 $
	\begin{equation}
		0
		\leq \Psi(\bar{x}_{K},y^{\ast}) - \Psi(x^{\ast},\bar{y}_{K})
		\leq \frac{1}{K} \left( \frac{1}{2 \tau} \normsq{x^{\ast}-x_{0}} + \frac{1}{2 \sigma} \normsq{y^{\ast}-y_{0}} \right).
	\end{equation}
\end{proof}

\subsubsection{Convex-strongly concave setting}\label{convex-strconcave}
For the remainder of this section we assume that the function $ g $ is convex with modulus $ \nu > 0 $, meaning it is $ \nu $-strongly convex. In this case the choice~\eqref{stepsize-pdhg} leads to adaptive parameters and accelerated convergence.
\begin{proposition}\label{cor:stepsize-convex-strconcave}
	Let $ c_{\alpha} > L_{yx} \geq 0 $, $ \theta_{0} = 1 $ and $ \tau_{0}, \, \sigma_{0} > 0 $ such that
	\begin{equation}
		\left( c_{\alpha} L_{yx} \tau_{0} + 2 L_{yy} \right) \sigma_{0} < 1.
	\end{equation}
	If $ \nu > 0 $ then $ (\tau_{k})_{k \geq 0} $, $ (\sigma_{k})_{k \geq 0} $ and $ (\theta_{k})_{k \geq 0} $ as defined in Proposition~\ref{prop:stepsize} are adaptive, in particular we have
	\begin{equation}
		\label{adapt-param}
		\theta_{k+1} = \frac{1}{\sqrt{1 + \nu \sigma_{k}}} < 1,
			\hspace{5mm}
		\tau_{k+1} = \frac{\tau_{k}}{\theta_{k+1}} > \tau_{k},
			\hspace{5mm}
		\sigma_{k+1} = \theta_{k+1} \sigma_{k} < \sigma_{k}
		\quad
		\text{for all } k \geq 0.
	\end{equation}
\end{proposition}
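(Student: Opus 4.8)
The plan is to observe that the claimed assertions are a direct consequence of the recursive definitions in Proposition~\ref{prop:stepsize} together with the strict positivity $\nu > 0$, and to organise the argument as a short induction that first secures positivity of the step sizes and then reads off the strict monotonicity relations.

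First I would verify by induction that $\sigma_k > 0$ and $\tau_k > 0$ for all $k \geq 0$. The base case holds by hypothesis, since $\sigma_0, \tau_0 > 0$. For the inductive step, note that $\theta_{k+1} = (1 + \nu \sigma_k)^{-1/2}$ is well defined and strictly positive whenever $\sigma_k > 0$ (using $\nu \geq 0$), whence $\sigma_{k+1} = \theta_{k+1} \sigma_k > 0$ and $\tau_{k+1} = \tau_k / \theta_{k+1} > 0$. This keeps the entire construction inside $\R_{++}$, which is precisely what is needed to invoke the defining formula of $\theta_{k+1}$ at the next index.

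With positivity established, the key observation is that $\nu > 0$ forces $1 + \nu \sigma_k > 1$ for every $k \geq 0$, so that $\sqrt{1 + \nu \sigma_k} > 1$ and therefore $\theta_{k+1} = (1 + \nu \sigma_k)^{-1/2} < 1$. From $\theta_{k+1} \in (0,1)$ the two remaining strict inequalities follow immediately: dividing the positive quantity $\tau_k$ by $\theta_{k+1} < 1$ gives $\tau_{k+1} = \tau_k / \theta_{k+1} > \tau_k$, while multiplying the positive quantity $\sigma_k$ by $\theta_{k+1} < 1$ gives $\sigma_{k+1} = \theta_{k+1} \sigma_k < \sigma_k$. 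This yields exactly the three relations in~\eqref{adapt-param}.

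There is no genuine obstacle in this proposition; the only point requiring care is that the strict inequality $\theta_{k+1} < 1$ hinges on $\sigma_k$ being \emph{strictly} positive rather than merely nonnegative, which is why the preliminary induction confirming $\sigma_k > 0$ must precede the invocation of $1 + \nu \sigma_k > 1$. In the degenerate case $\nu = 0$ one would instead obtain $\theta_{k+1} = 1$ and the parameters would remain constant, recovering Proposition~\ref{cor:stepsize-convex-concave}; it is the strict positivity of $\nu$ that produces the adaptive, accelerating behaviour asserted here.
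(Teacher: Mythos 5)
Your proof is correct and takes essentially the same route as the paper, whose entire proof is the remark that the statements follow directly from Proposition~\ref{prop:stepsize} once $\nu > 0$. Your version merely makes explicit the positivity induction and the elementary monotonicity consequences of $\theta_{k+1} \in (0,1)$ that the paper leaves implicit.
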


\begin{proof}
The statements follow directly from Proposition~\ref{prop:stepsize} for $ \nu > 0 $.
\end{proof}

To obtain statements regarding the (accelerated) convergence rates in the convex-strongly~concave setting, we look at the behaviour of the sequences of step size parameters $(\tau_{k})_{k \geq 0} $ and $(\sigma_{k})_{k \geq 0}$ for $ k \to + \infty $.
\begin{proposition}
	Let $ \theta_{0} = 1 $, $ \tau_{0} > 0 $,
	\begin{equation}
		0 < \sigma_{0} \leq \frac{9 + 3 \sqrt{13}}{2 \nu},
	\end{equation}
	and for all $ k \geq 0 $ denote
	\begin{equation}
		\gamma_{k} := \frac{\tau_{k}}{\sigma_{k}}.
	\end{equation}
	Then with the choice of adaptive parameters~\eqref{adapt-param} we have for all $ k \geq 0 $
	\begin{equation}
		\gamma_{k} \geq \frac{\nu^{2} \tau_{0} \sigma_{0}}{9} k^{2} \quad \mbox{and} \quad
		\tau_{k} \geq \frac{\nu \tau_{0} \sigma_{0}}{3} k,
	\end{equation}
	and for all $ k \geq 1 $
	\begin{equation}
		\sigma_{k} \leq \frac{3}{\nu} \frac{1}{k}.
	\end{equation}
\end{proposition}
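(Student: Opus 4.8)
The plan is to reduce all three claimed estimates to the single inequality $\gamma_k \geq \frac{\nu^2 \tau_0 \sigma_0}{9} k^2$. The key observation is that the adaptive rule~\eqref{adapt-param} preserves the product of the step sizes: from $\tau_{k+1} \sigma_{k+1} = \frac{\tau_k}{\theta_{k+1}} \cdot \theta_{k+1} \sigma_k = \tau_k \sigma_k$ one gets $\tau_k \sigma_k = \tau_0 \sigma_0$ for all $k \geq 0$. Consequently $\gamma_k = \tau_k / \sigma_k = \tau_0 \sigma_0 / \sigma_k^2$ and $\tau_k = \sqrt{\gamma_k \, \tau_0 \sigma_0}$, so once the quadratic growth of $\gamma_k$ is established the bound $\tau_k \geq \frac{\nu \tau_0 \sigma_0}{3} k$ follows by taking a square root, and $\sigma_k = \tau_0 \sigma_0 / \tau_k \leq \frac{3}{\nu k}$ follows by inverting it (for $k \geq 1$). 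Thus the whole proposition hinges on the estimate for $\gamma_k$.

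Next I would derive the recursion that $\gamma_k$ satisfies. Using $\theta_{k+1}^2 = (1 + \nu \sigma_k)^{-1}$ gives $\gamma_{k+1} = \tau_{k+1} / \sigma_{k+1} = \gamma_k / \theta_{k+1}^2 = \gamma_k (1 + \nu \sigma_k)$. Substituting $\sigma_k = \sqrt{\tau_0 \sigma_0 / \gamma_k}$ and writing $u_k := \sqrt{\gamma_k}$, $\beta := \nu \sqrt{\tau_0 \sigma_0}$, this becomes the clean scalar recurrence
\[
	u_{k+1}^2 = u_k^2 + \beta u_k .
\]
The target inequality is now exactly $u_k \geq \frac{\beta}{3} k$, which I would prove by induction on $k$.

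The inductive step is the easy part: since $t \mapsto t^2 + \beta t$ is increasing on $[0, +\infty)$, the hypothesis $u_k \geq \frac{\beta}{3} k$ yields $u_{k+1}^2 \geq \frac{\beta^2}{9}(k^2 + 3k)$, and $k^2 + 3k \geq (k+1)^2$ holds precisely when $k \geq 1$; hence $u_{k+1} \geq \frac{\beta}{3}(k+1)$ for every $k \geq 1$. The main obstacle is therefore the base case, which cannot be taken at $k = 0$ (there the step inequality $k \geq 1$ fails) but must instead be established at $k = 1$. Computing $u_1^2 = \gamma_0 + \beta u_0 = \frac{\tau_0}{\sigma_0}(1 + \nu \sigma_0)$, the requirement $u_1^2 \geq \frac{\beta^2}{9} = \frac{\nu^2 \tau_0 \sigma_0}{9}$ simplifies to $\nu^2 \sigma_0^2 - 9 \nu \sigma_0 - 9 \leq 0$. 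This is exactly where the standing hypothesis on $\sigma_0$ enters: the positive root of this quadratic in the variable $\nu \sigma_0$ is $\frac{9 + 3\sqrt{13}}{2}$, so the assumption $\sigma_0 \leq \frac{9 + 3\sqrt{13}}{2\nu}$ is precisely what makes the base case, and hence the entire induction, go through.

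With $\gamma_k \geq \frac{\nu^2 \tau_0 \sigma_0}{9} k^2$ now available for all $k \geq 0$ (the case $k = 0$ being trivial), I would conclude by reading off the remaining two bounds exactly as set up in the first paragraph: $\tau_k = u_k \sqrt{\tau_0 \sigma_0} \geq \frac{\nu \tau_0 \sigma_0}{3} k$, and then $\sigma_k = \tau_0 \sigma_0 / \tau_k \leq \frac{3}{\nu} \frac{1}{k}$ for $k \geq 1$. I expect no further difficulty here, as these are purely algebraic consequences of the invariant $\tau_k \sigma_k = \tau_0 \sigma_0$.
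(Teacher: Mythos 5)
Your proposal is correct and follows essentially the same route as the paper: both exploit the invariant $\tau_k\sigma_k = \tau_0\sigma_0$ to reduce everything to the recursion $\gamma_{k+1} = \gamma_k + \nu\sqrt{\tau_0\sigma_0}\sqrt{\gamma_k}$, prove $\gamma_k \geq \frac{\nu^2\tau_0\sigma_0}{9}k^2$ by induction with the base case placed at $k=1$ (where the hypothesis on $\sigma_0$ enters via the same quadratic inequality), and then read off the bounds on $\tau_k$ and $\sigma_k$ algebraically. Your substitution $u_k = \sqrt{\gamma_k}$, $\beta = \nu\sqrt{\tau_0\sigma_0}$ is a purely cosmetic cleanup of the paper's argument.
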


\begin{proof}
	By~\eqref{stepsize-pdhg} we conclude that for all $k \geq 0$
	\begin{equation}
		\gamma_{k+1} = \gamma_{k} (1 + \nu \sigma_{k}),
	\end{equation}
	and further
	\begin{equation}
		\sigma_{k+1} = \sigma_{k} \sqrt{ \frac{\gamma_{k}}{\gamma_{k+1}} },
	\end{equation}
	which, applied recursively, gives
	\begin{equation}
		\sigma_{k}
			= \sigma_{0} \sqrt{ \frac{\gamma_{0}}{\gamma_{k}} }
			= \sqrt{ \tau_{0} \sigma_{0} } \frac{1}{\sqrt{\gamma_{k}}}.
	\end{equation}
	We obtain
	\begin{equation}
		\gamma_{k+1}
			= \gamma_{k} (1 + \nu \sigma_{k})
			= \gamma_{k} + \nu \sqrt{ \tau_{0} \sigma_{0} } \sqrt{\gamma_{k}},
	\end{equation}
	which we will use to show by induction that for all $k \geq 0$
	\begin{equation}
		\label{gamma}
		\gamma_{k} \geq \frac{ \nu^{2} \tau_{0} \sigma_{0} }{9} k^{2}.
	\end{equation}
For $ k = 0 $ the statement trivially holds, whereas for $ k = 1 $ we need to verify that
	\begin{equation}
		\gamma_{1}
			= \gamma_{0} + \nu \sqrt{ \tau_{0} \sigma_{0} } \sqrt{\gamma_{0}}
			= \frac{\tau_{0}}{\sigma_{0}} \left( 1 + \nu \sigma_{0} \right)
			\geq \frac{ \nu^{2} \tau_{0} \sigma_{0} }{9},
	\end{equation}
	which is equivalent to the following quadratic inequality
	\begin{equation}
		\sigma_{0}^{2} - \frac{9}{\nu} \sigma_{0} - \frac{9}{\nu^{2}} \leq 0,
	\end{equation}
	and guaranteed to hold by our initial choice of $ \sigma_{0} > 0 $.
	Now let $ k \geq 1 $ and assume that~\eqref{gamma} holds. Then
	\begin{equation}
		\gamma_{k+1}
			= \gamma_{k} + \nu \sqrt{ \tau_{0} \sigma_{0} } \sqrt{\gamma_{k}}
			\geq \frac{ \nu^{2} \tau_{0} \sigma_{0} }{9} k^{2} + \frac{ \nu^{2} \tau_{0} \sigma_{0} }{3} k
			\geq \frac{ \nu^{2} \tau_{0} \sigma_{0} }{9} (k+1)^{2}.
	\end{equation}
This shows the validity of~\eqref{gamma} for all $ k \geq 0 $.

Now we can use inequality~\eqref{gamma} to deduce the convergence behaviour of the sequences $(\tau_{k})_{k \geq 0}$ and $(\sigma_{k})_{k \geq 0}$ for $ k \to + \infty $. We get for all $ k \geq 0 $
	\begin{equation}
		\label{tau}
		\tau_{k}
		= \sigma_{k} \gamma_{k}
		= \sqrt{ \tau_{0} \sigma_{0} } \sqrt{\gamma_{k}}
		\geq \frac{ \nu \tau_{0} \sigma_{0} }{3} k,
	\end{equation}
	which, combined with
	\begin{equation}
		\label{tau-sigma}
		\tau_{k} \sigma_{k}
		= \frac{\tau_{k}^{2}}{\gamma_{k}}
		= \tau_{0} \sigma_{0},
	\end{equation}
	gives for all $ k \geq 1 $
	\begin{equation}
		\sigma_{k} \leq \frac{3}{\nu} \frac{1}{k}.
	\end{equation}
\end{proof}

Now we are ready to prove the convergence results in the convex-strongly concave setting.
\begin{theorem}\label{convex-str-concave}
	Let $ c_{\alpha} > L_{yx} \geq 0 $, $ \theta_{0} = 1 $ and $ \tau_{0}, \, \sigma_{0} > 0 $ such that
	\begin{equation}
		\left( c_{\alpha} L_{yx} \tau_{0} + 2 L_{yy} \right) \sigma_{0} < 1 \quad \mbox{and} \quad
		0 < \sigma_{0} \leq \frac{9 + 3 \sqrt{13}}{2 \nu}.
	\end{equation}
	Let $ (x^{\ast}, y^{\ast}) \in \cH \times \cG $ be a saddle point of~\eqref{min-max}. Then for $ (x_{k}, y_{k})_{k \geq 0} $ being the sequence generated by OGAProx with the choice of adaptive parameters
	\begin{equation}
		\theta_{k+1} = \frac{1}{\sqrt{1 + \nu \sigma_{k}}} < 1,
			\hspace{5mm}
		\tau_{k+1} = \frac{\tau_{k}}{\theta_{k+1}} > \tau_{k},
			\hspace{5mm}
		\sigma_{k+1} = \theta_{k+1} \sigma_{k} < \sigma_{k}
		\quad
		\text{for all } k \geq 0,
	\end{equation}
	we have for all $ K \geq 1 $
	\begin{equation}
		\norm{y^{\ast}-y_{K}}
			\leq \frac{c_{1}}{K} {\left( \frac{1}{2 \tau_{0}} \normsq{x^{\ast}-x_{0}} + \frac{1}{2 \sigma_{0}} \normsq{y^{\ast}-y_{0}} \right)}^{\frac{1}{2}},
	\end{equation}
	with $ c_{1} := \sqrt{\frac{18}{\nu^{2} \sigma_{0} \delta}} $, where $ \delta > 0 $ is defined in~\eqref{delta}.
	Furthermore, for $K \geq 1$,  denote
	\begin{equation}
		T_{K} = \sum_{k=0}^{K-1} t_{k},
		\hspace{5mm}
		\bar{x}_{K} = \frac{1}{T_{K}} \sum_{k=0}^{K-1} t_{k} x_{k+1},
		\hspace{5mm}
		\bar{y}_{K} = \frac{1}{T_{K}} \sum_{k=0}^{K-1} t_{k} y_{k+1},
	\end{equation}
	where $t_k = \frac{\tau_k}{\tau_0}$ for all $k \geq 0$ (see also \eqref{tk-pdhg}).
	Then for all $ K \geq 2 $ it holds
	\begin{equation}
		0
		\leq \Psi(\bar{x}_{K},y^{\ast}) - \Psi(x^{\ast},\bar{y}_{K})
		\leq \frac{c_{2}}{K^{2}} \left( \frac{1}{2 \tau_{0}} \normsq{x^{\ast}-x_{0}} + \frac{1}{2 \sigma_{0}} \normsq{y^{\ast}-y_{0}} \right),
	\end{equation}
	with $ c_{2} := \frac{12}{\nu \sigma_{0}} $.
\end{theorem}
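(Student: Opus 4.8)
The plan is to exploit the two prepared estimates~\eqref{est-gap-erg-1} and~\eqref{est-gap-erg-2} together with the quadratic growth of $(\gamma_{k})_{k \geq 0}$ and the linear growth of $(\tau_{k})_{k \geq 0}$ established in the preceding proposition. The nonnegativity $0 \leq \Psi(\bar{x}_{K},y^{\ast}) - \Psi(x^{\ast},\bar{y}_{K})$ appearing in both claims is immediate from the saddle point inequalities~\eqref{def-saddle-point}: evaluating them and invoking convexity-concavity of the averages yields $\Psi(x^{\ast},\bar{y}_{K}) \leq \Psi(x^{\ast},y^{\ast}) \leq \Psi(\bar{x}_{K},y^{\ast})$.

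For the ergodic gap rate I would begin from the clean estimate~\eqref{est-gap-erg-2}, which already bounds the gap by $\frac{1}{T_{K}}\bigl(\frac{1}{2\tau_{0}}\normsq{x^{\ast}-x_{0}} + \frac{1}{2\sigma_{0}}\normsq{y^{\ast}-y_{0}}\bigr)$. Everything then reduces to lower bounding $T_{K} = \sum_{k=0}^{K-1} t_{k} = \frac{1}{\tau_{0}}\sum_{k=0}^{K-1}\tau_{k}$, using~\eqref{tk-pdhg} and the linear lower bound $\tau_{k} \geq \frac{\nu \tau_{0}\sigma_{0}}{3}k$. Summing the arithmetic progression gives $T_{K} \geq \frac{\nu\sigma_{0}}{6}K(K-1)$, and for $K \geq 2$ the elementary bound $K-1 \geq K/2$ yields $T_{K} \geq \frac{\nu\sigma_{0}}{12}K^{2}$, hence $\frac{1}{T_{K}} \leq \frac{12}{\nu\sigma_{0}K^{2}}$, which is precisely the claimed constant $c_{2} = \frac{12}{\nu\sigma_{0}}$.

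The iterate estimate on $\norm{y^{\ast}-y_{K}}$ is the more delicate step, since the final estimate~\eqref{est-gap-erg-2} has already discarded the very term I need. Instead I would work from the intermediate inequality~\eqref{est-gap-erg-1} evaluated at $(x^{\ast},y^{\ast})$: combining the nonnegativity of its left-hand side with the nonnegativity of the $x$-distance term (which I discard) and using $t_{0} = 1$, I obtain
\[
	\frac{t_{K}}{2}\left(\frac{1}{\sigma_{K}} - \theta_{K}(L_{yx}\alpha_{K} + L_{yy})\right)\normsq{y^{\ast}-y_{K}} \leq \frac{1}{2\tau_{0}}\normsq{x^{\ast}-x_{0}} + \frac{1}{2\sigma_{0}}\normsq{y^{\ast}-y_{0}}.
\]
The crucial quantitative input is that the coefficient in parentheses is bounded below by $\delta/\sigma_{K}$: since $\theta_{K}(L_{yx}\alpha_{K} + L_{yy}) = L_{yx}\alpha_{K}\theta_{K} + L_{yy}\theta_{K} \leq L_{yx}\alpha_{K}\theta_{K} + L_{yy}(1+\theta_{K}) \leq \frac{1-\delta}{\sigma_{K}}$ by the second inequality in~\eqref{stepsize-delta}, it follows that $\frac{1}{\sigma_{K}} - \theta_{K}(L_{yx}\alpha_{K} + L_{yy}) \geq \frac{\delta}{\sigma_{K}}$.

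Finally I would convert this into the rate by passing through $\gamma_{K}$. Using $t_{K} = \tau_{K}/\tau_{0}$ and $\gamma_{K} = \tau_{K}/\sigma_{K}$ gives $\frac{t_{K}}{\sigma_{K}} = \frac{\gamma_{K}}{\tau_{0}}$, so the coefficient is at least $\frac{\delta\gamma_{K}}{2\tau_{0}} \geq \frac{\delta\nu^{2}\sigma_{0}}{18}K^{2}$ by the quadratic lower bound $\gamma_{k} \geq \frac{\nu^{2}\tau_{0}\sigma_{0}}{9}k^{2}$. Solving the displayed inequality for $\normsq{y^{\ast}-y_{K}}$ and taking square roots produces the factor $\frac{1}{K}\sqrt{\frac{18}{\nu^{2}\sigma_{0}\delta}}$, i.e.\ the constant $c_{1}$. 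I expect the main obstacle to be purely one of bookkeeping: one must resist collapsing to~\eqref{est-gap-erg-2} and instead retain the negative $\normsq{y^{\ast}-y_{K}}$ term, then correctly identify its coefficient with $\delta\gamma_{K}/(2\tau_{0})$ through the step size identities $t_{K}=\tau_{K}/\tau_{0}$ and $\gamma_{K}=\tau_{K}/\sigma_{K}$ before invoking the quadratic growth.
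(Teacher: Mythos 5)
Your proposal is correct and matches the paper's own proof essentially step for step: the ergodic rate comes from~\eqref{est-gap-erg-2} combined with $t_k=\tau_k/\tau_0$, the linear lower bound $\tau_k \geq \frac{\nu\tau_0\sigma_0}{3}k$ and $K(K-1)\geq K^2/2$, while the iterate rate comes from~\eqref{est-gap-erg-1} at $(x^\ast,y^\ast)$, keeping the negative $\normsq{y^\ast-y_K}$ term, lower-bounding its coefficient by $\delta\gamma_K/(2\tau_0)$ via~\eqref{stepsize-delta}, and invoking the quadratic growth $\gamma_K \geq \frac{\nu^2\tau_0\sigma_0}{9}K^2$. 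The constants $c_1$ and $c_2$ are recovered exactly as in the paper.
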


\begin{proof}
	Let $ K \geq 1 $ and let $ (x^{\ast},y^{\ast}) \in \cH \times \cG $ be an arbitrary but fixed saddle point. First we will prove the convergence rate of the sequence of iterates $ (y_{k})_{k \geq 0} $. Plugging the particular choice of parameters~\eqref{adapt-param} into~\eqref{est-gap-erg-1} for $ (x^{\ast}, y^{\ast}) $, we obtain
	\begin{equation}
		\begin{split}
			\frac{1}{2 \tau_{0}} \normsq{x^{\ast}-x_{0}} + \frac{1}{2 \sigma_{0}} \normsq{y^{\ast}-y_{0}} & \geq \frac{1}{2 \tau_{0}} \normsq{x^{\ast}-x_{K}}
			+ \frac{\tau_{K}}{\sigma_{K}} \left( 1 - \sigma_{K} \theta_{K} ( L_{yx} \alpha_{K} + L_{yy} ) \right) \frac{1}{2 \tau_{0}} \normsq{y^{\ast}-y_{K}}\\
			&\geq \gamma_{K} \frac{\delta}{2 \tau_{0}} \normsq{y^{\ast}-y_{K}},
		\end{split}
	\end{equation}
	where we use~\eqref{stepsize-delta} in Assumption~\ref{A1} for the last inequality. Combining this with~\eqref{gamma} we derive
	\begin{equation}
		\norm{y^{\ast}-y_{K}}
		\leq \frac{c_{1}}{K} {\left( \frac{1}{2 \tau_{0}} \normsq{x^{\ast}-x_{0}} + \frac{1}{2 \sigma_{0}} \normsq{y^{\ast}-y_{0}} \right)}^{\frac{1}{2}},
	\end{equation}
	with $ c_{1} := \sqrt{\frac{18}{\nu^{2} \sigma_{0} \delta}} $.

	Next we will show the convergence rate of the minimax gap at the ergodic sequences. Writing~\eqref{est-gap-erg-2} for $ (x^{\ast}, y^{\ast}) $, we obtain
	\begin{equation}
		\label{est-gap-erg-3}
		0
		\leq \Psi(\bar{x}_{K},y^{\ast}) - \Psi(x^{\ast},\bar{y}_{K})
		\leq \frac{1}{T_{K}} \left( \frac{1}{2 \tau_{0}} \normsq{x^{\ast}-x_{0}} + \frac{1}{2 \sigma_{0}} \normsq{y^{\ast}-y_{0}} \right).
	\end{equation}
Plugging the particular choice of $ t_{k} = \frac{\tau_{k}}{\tau_{0}} $ for all $ k \geq 0 $  from~\eqref{tk-pdhg} into the definition of $ T_{K} $, together with~\eqref{tau} yields
	\begin{equation}
		T_{K}
		= \frac{1}{\tau_{0}} \sum_{k=0}^{K-1} \tau_{k}
		\geq \frac{\nu \sigma_{0}}{3} \sum_{k=0}^{K-1} k
		= \frac{\nu \sigma_{0}}{6} K(K-1).
	\end{equation}
	Combining this inequality with~\eqref{est-gap-erg-3}, we obtain for all $ K \geq 2 $
	\begin{equation}
		0
		\leq \Psi(\bar{x}_{K},y^{\ast}) - \Psi(x^{\ast},\bar{y}_{K})
		\leq \frac{c_{2}}{K^{2}} \left( \frac{1}{2 \tau_{0}} \normsq{x^{\ast}-x_{0}} + \frac{1}{2 \sigma_{0}} \normsq{y^{\ast}-y_{0}} \right),
	\end{equation}
	with $ c_{2} := \frac{12}{\nu \sigma_{0}} $, which concludes the proof.
\end{proof}

\section{Strongly convex-strongly concave setting}\label{sec:strconvex-strconcave}
For this section we assume that the function $ g $ is convex with modulus $ \nu > 0 $, meaning it is $ \nu $-strongly convex. In addition to the assumptions we had until now, for this section we also assume that for all $ y \in \dom g $ the function $ \Phi(\ph,y): \cH \to \R \cup \{ + \infty \} $ is $ \mu $-strongly convex with modulus $ \mu > 0 $. This means that the saddle function $(x,y) \mapsto \Psi(x, y) $ is strongly convex-strongly concave.

As in the previous section we will state two step size assumptions that will be needed for the convergence analysis. These again will be followed by preparatory observations and a result to guarantee the validity of the stated assumptions. The section will be closed with the formulation and proof of convergence results.

\begin{assumption}\label{A2}
	We assume that the step sizes $ \tau_{k} $, $ \sigma_{k} $ and the momentum parameter $ \theta_{k} $ are constant
	\begin{equation}
		\theta_{k} = \theta_{0} =: \theta,
			\hspace{5mm}
		\tau_{k} = \tau_{0} =: \tau,
			\hspace{5mm}
		\sigma_{k} = \sigma_{0} =: \sigma \quad \forall k \geq 0,
	\end{equation}
	and satisfy
	\begin{equation}
		\label{stepsize-lin-1}
		1 + \mu \tau = \frac{1}{\theta},
			\hspace{5mm}
		1 + \nu \sigma = \frac{1}{\theta},
	\end{equation}
	with
	\begin{equation}
		\label{theta-lin}
		0 < \theta < 1.
	\end{equation}
	Furthermore we assume that there exists $ \alpha > 0 $ such that
	\begin{equation}
		\label{stepsize-lin-2}
		\frac{L_{yx}}{\alpha} \leq \frac{1}{\tau},
			\hspace{5mm}
		L_{yy} \leq \frac{1 - \theta \sigma (\alpha L_{yx} + L_{yy})}{\sigma},
	\end{equation}
	with
	\begin{equation}
		\label{stepsize-lin-3}
		1 - \theta \sigma (\alpha L_{yx} + L_{yy}) > 0.
	\end{equation}
\end{assumption}

\subsection{Preliminary considerations}
We take an arbitrary $(x,y) \in \cH \times \cG $ and let $ k \geq 0 $. Following similar considerations along~\eqref{subdiff-g}-\eqref{subdiff-phi}, additionally taking into account the $ \mu $-strong convexity of $ \Phi (\ph, y) $ for $ y \in \dom g $, instead of~\eqref{comb} we derive
\begin{equation}
	\begin{split}
		\Psi(x_{k+1},y)-\Psi(x,y_{k+1}) \leq & \ \theta \sprod{q_{k}}{y_{k} - y} - \sprod{q_{k+1}}{y_{k+1} - y}\\
			& - \frac{\mu}{2} \normsq{x - x_{k+1}} - \frac{\nu}{2} \normsq{y - y_{k+1}} + \theta \sprod{q_{k}}{y_{k+1} - y_{k}}\\
			& + \frac{1}{2\tau} \left(-\normsq{x_{k} - x_{k+1}} - \normsq{x - x_{k+1}} + \normsq{x - x_{k}}\right)\\
			& + \frac{1}{2\sigma} \left(-\normsq{y_{k} - y_{k+1}} - \normsq{y - y_{k+1}} + \normsq{y - y_{k}}\right)\\
		\leq & \ \frac{1}{2 \tau} \normsq{x - x_{k}} + \frac{1}{2 \sigma} \normsq{y - y_{k}} + \theta \sprod{q_{k}}{y_{k} - y}\\
			&- \frac{1 + \mu \tau}{2 \tau} \normsq{x - x_{k+1}} - \frac{1 + \nu \sigma}{2 \sigma} \normsq{y - y_{k+1}} - \sprod{q_{k+1}}{y_{k+1} - y}\\
			&+ \frac{\theta L_{yx}}{2 \alpha} \normsq{x_{k} - x_{k-1}} - \frac{1}{2 \tau} \normsq{x_{k+1} - x_{k}}\\
			&+ \frac{\theta L_{yy}}{2} \normsq{y_{k} - y_{k-1}} - \frac{1 - \theta \sigma (\alpha L_{yx} + L_{yy})}{2 \sigma} \normsq{y_{k+1} - y_{k}}.
	\end{split}
\end{equation}
By~\eqref{stepsize-lin-1} in Assumption~\ref{A2} and for $\alpha >0$ fulfilling \eqref{stepsize-lin-2}-\eqref{stepsize-lin-3}, we obtain
\begin{equation}
	\begin{split}
		\Psi(x_{k+1},y)-\Psi(x,y_{k+1}) \leq & \ \frac{1}{2 \tau} \normsq{x - x_{k}} + \frac{1}{2 \sigma} \normsq{y - y_{k}} + \theta \sprod{q_{k}}{y_{k} - y}\\
			&- \frac{1}{2 \tau} \frac{1}{\theta} \normsq{x - x_{k+1}} - \frac{1}{2 \sigma} \frac{1}{\theta} \normsq{y - y_{k+1}} - \sprod{q_{k+1}}{y_{k+1} - y}\\
			&+ \frac{\theta L_{yx}}{2 \alpha} \normsq{x_{k} - x_{k-1}} - \frac{1}{2 \tau} \normsq{x_{k+1} - x_{k}}\\
			&+ \frac{\theta L_{yy}}{2} \normsq{y_{k} - y_{k-1}} - \frac{1 - \theta \sigma (\alpha L_{yx} + L_{yy})}{2 \sigma} \normsq{y_{k+1} - y_{k}},
	\end{split}
\end{equation}
which together with~\eqref{stepsize-lin-2} and~\eqref{stepsize-lin-3} gives
\begin{equation}
	\label{comb-lin}
	\begin{split}
		\Psi(x_{k+1},y)-\Psi(x,y_{k+1}) \leq & \ \frac{1}{2 \tau} \left( \normsq{x - x_{k}} - \frac{1}{\theta} \normsq{x - x_{k+1}}\right) + \frac{1}{2 \sigma} \left( \normsq{y - y_{k}} - \frac{1}{\theta} \normsq{y - y_{k+1}} \right)\\
			& + \theta \sprod{q_{k}}{y_{k} - y} - \sprod{q_{k+1}}{y_{k+1} - y} + \frac{1}{2 \tau} \left( \theta \normsq{x_{k} - x_{k-1}} - \normsq{x_{k+1} - x_{k}} \right)\\
			& + \frac{1}{2 \tilde{\sigma}} \left( \theta \normsq{y_{k} - y_{k-1}} - \normsq{y_{k+1} - y_{k}} \right),
	\end{split}
\end{equation}
where
\begin{equation}\label{cuc}
	\tilde{\sigma} := \frac{\sigma}{1 - \theta \sigma (\alpha L_{yx} + L_{yy})}.
\end{equation}

Let $ K \geq 1 $ and as in~\eqref{erg-sequ} denote
\begin{equation}
	T_{K} = \sum_{k=0}^{K-1} t_{k},
	\hspace{5mm}
	\bar{x}_{K} = \frac{1}{T_{K}} \sum_{k=0}^{K-1} t_{k}x_{k+1},
	\hspace{5mm}
	\bar{y}_{K} = \frac{1}{T_{K}} \sum_{k=0}^{K-1} t_{k}y_{k+1}.
\end{equation}
with $ t_{k} > 0 $ defined as in~\eqref{tk}, in other words
\begin{equation}
	t_{k} = \theta^{-k} \quad \forall k \geq 0.
\end{equation}

Multiplying both sides of~\eqref{comb-lin} by $ t_{k} > 0 $ yields
\begin{equation}
	\begin{split}
		\frac{1}{\theta^{k}} \left(\Psi(x_{k+1},y)-\Psi(x,y_{k+1})\right) &\leq \frac{1}{2 \tau} \left( \frac{1}{\theta^{k}} \normsq{x - x_{k}} - \frac{1}{\theta^{k+1}} \normsq{x - x_{k+1}}\right)\\
			&\hspace{5mm}+ \frac{1}{2 \sigma} \left( \frac{1}{\theta^{k}} \normsq{y - y_{k}} - \frac{1}{\theta^{k+1}} \normsq{y - y_{k+1}} \right)\\
			&\hspace{5mm}+ \frac{1}{\theta^{k-1}}\sprod{q_{k}}{y_{k} - y} - \frac{1}{\theta^{k}} \sprod{q_{k+1}}{y_{k+1} - y}\\
			&\hspace{5mm}+ \frac{1}{2 \tau} \left( \frac{1}{\theta^{k-1}} \normsq{x_{k} - x_{k-1}} - \frac{1}{\theta^{k}} \normsq{x_{k+1} - x_{k}} \right)\\
			&\hspace{5mm}+ \frac{1}{2 \tilde{\sigma}} \left( \frac{1}{\theta^{k-1}} \normsq{y_{k} - y_{k-1}} - \frac{1}{\theta^{k}} \normsq{y_{k+1} - y_{k}} \right).
	\end{split}
\end{equation}
Summing up the above inequality for $ k = 0, \ldots, K-1 $ and taking into account Jensen's inequality for the convex function $ \Psi(\ph, y) - \Psi(x, \ph) $ give
\begin{equation}
	\begin{split}
		T_{K} \left( \Psi(\bar{x}_{K},y)-\Psi(x,\bar{y}_{K}) \right) & \leq \sum_{k=0}^{K-1} \frac{1}{\theta^{k}} \left( \Psi(x_{k+1},y)-\Psi(x,y_{k+1}) \right)\\
			&\leq \frac{1}{2 \tau} \left( \normsq{x - x_{0}} - \frac{1}{\theta^{K}} \normsq{x - x_{K}} \right)
			+ \frac{1}{2 \sigma} \left( \normsq{y - y_{0}} - \frac{1}{\theta^{K}} \normsq{y - y_{K}} \right)\\
				&\hspace{5mm}- \frac{1}{\theta^{K-1}} \sprod{q_{K}}{y_{K} - y}
				- \frac{1}{\theta^{K-1}} \frac{1}{2 \tau} \normsq{x_{K} - x_{K-1}}
				- \frac{1}{\theta^{K-1}} \frac{1}{2 \tilde{\sigma}} \normsq{y_{K} - y_{K-1}}\\
			&\leq \frac{1}{2 \tau} \left( \normsq{x - x_{0}} - \frac{1}{\theta^{K}} \normsq{x - x_{K}} \right)
			+ \frac{1}{2 \sigma} \left( \normsq{y - y_{0}} - \frac{1}{\theta^{K}} \normsq{y - y_{K}} \right)\\
				&\hspace{5mm}+ \frac{1}{\theta^{K-1}} \frac{L_{yx}}{2} \left( \frac{1}{\alpha} \normsq{x_{K} - x_{K-1}} + \alpha \normsq{y_{K} - y} \right)
				- \frac{1}{\theta^{K-1}} \frac{1}{2 \tau} \normsq{x_{K} - x_{K-1}}\\
				&\hspace{5mm}+ \frac{1}{\theta^{K-1}} \frac{L_{yy}}{2} \left( \normsq{y_{K} - y_{K-1}} + \normsq{y_{K} - y} \right)
				- \frac{1}{\theta^{K-1}} \frac{1}{2 \tilde{\sigma}} \normsq{y_{K} - y_{K-1}}\\
			&= \frac{1}{2 \tau} \normsq{x - x_{0}} + \frac{1}{2 \sigma} \normsq{y - y_{0}}\\
				&\hspace{5mm}- \frac{1}{\theta^{K}} \frac{1}{2 \tau} \normsq{x - x_{K}}
				- \frac{1}{\theta^{K}} \frac{1 - \theta \sigma (\alpha L_{yx} + L_{yy})}{2 \sigma} \normsq{y - y_{K}}\\
				&\hspace{5mm}- \frac{1}{2 \theta^{K-1}} \left( \frac{1}{\tau} - \frac{L_{yx}}{\alpha} \right) \normsq{x_{K} - x_{K-1}}
				- \frac{1}{2 \theta^{K-1}} \left( \frac{1}{\tilde{\sigma}} - L_{yy} \right) \normsq{y_{K} - y_{K-1}},
	\end{split}
\end{equation}
where in the second inequality we use \eqref{estim}.
Omitting the last two terms which are non positive by~\eqref{stepsize-lin-2}, we obtain for all $ K \geq 1 $
\begin{equation}
	\label{lin-conv}
	\begin{split}
		T_{K} \theta^{K} \big( \Psi(\bar{x}_{K},y)-\Psi(x,\bar{y}_{K}) \big) + & \frac{1}{2 \tau} \normsq{x - x_{K}} + \frac{1}{2 \tilde{\sigma}} \normsq{y - y_{K}} \leq \theta^{K} \left( \frac{1}{2 \tau} \normsq{x - x_{0}} + \frac{1}{2 \sigma} \normsq{y - y_{0}} \right),
	\end{split}
\end{equation}
which we will use to obtain our convergence results in the following.

\subsection{Fulfilment of step size assumptions}
In this subsection we will investigate a particular choice of parameters $ \tau $, $ \sigma $ and $ \theta $ such that Assumption~\ref{A2} holds.

\begin{proposition}\label{prop:stepsize-strconvex-strconcave}
For $\alpha >0$ define
	\begin{equation}
		\label{theta-tilde}
		\tilde{\theta} :=
		\max \left\{
			\frac{L_{yx}}{\alpha \mu + L_{yx}}, \,
			\frac{\alpha L_{yx} + 2 L_{yy}}{\nu + \alpha L_{yx} + 2L_{yy}} \right\}.
	\end{equation}
Let $ \theta > 0 $ such that
	\begin{equation}
		\label{theta}
		0 \leq \tilde{\theta} < \theta < 1,
	\end{equation}
	and set
	\begin{equation}
		\label{tau-sigma-lin-gap}
		\tau = \frac{1}{\mu} \frac{1 - \theta}{\theta} \quad \mbox{and} \quad
		\sigma = \frac{1}{\nu} \frac{1 - \theta}{\theta}.
	\end{equation}
	Then $ \tau $, $ \sigma $ and $ \theta $ fulfil Assumption~\ref{A2}.
\end{proposition}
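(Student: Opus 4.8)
The plan is to verify, one at a time, that the explicit choice \eqref{tau-sigma-lin-gap} together with the admissible range \eqref{theta} for $\theta$ forces every requirement of Assumption~\ref{A2}. Constancy of the three parameters is built into the definition, so the first equations to check are those in \eqref{stepsize-lin-1}. These follow by direct substitution: from $\tau = \frac{1}{\mu}\frac{1-\theta}{\theta}$ one computes $1 + \mu\tau = 1 + \frac{1-\theta}{\theta} = \frac{1}{\theta}$, and identically $1 + \nu\sigma = \frac{1}{\theta}$ from $\sigma = \frac{1}{\nu}\frac{1-\theta}{\theta}$. Condition \eqref{theta-lin} is immediate from $0 \leq \tilde{\theta} < \theta < 1$ in \eqref{theta}.

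For the first inequality in \eqref{stepsize-lin-2} I would use $\frac{1}{\tau} = \frac{\mu\theta}{1-\theta}$ and rewrite $\frac{L_{yx}}{\alpha} \leq \frac{1}{\tau}$ equivalently as $\theta \geq \frac{L_{yx}}{\alpha\mu + L_{yx}}$, which holds because $\theta > \tilde{\theta}$ dominates the first argument of the maximum in \eqref{theta-tilde}. A convenient reformulation to keep at hand throughout is $\theta\sigma = \frac{1-\theta}{\nu}$, which lets me express \eqref{stepsize-lin-3} as $(1-\theta)(\alpha L_{yx} + L_{yy}) < \nu$. Dominating instead the second argument of the maximum gives $(1-\theta)(\alpha L_{yx} + 2L_{yy}) < \theta\nu$; since $0 < \theta < 1$ and $L_{yy} \geq 0$, this at once yields $(1-\theta)(\alpha L_{yx} + L_{yy}) < \nu$, so \eqref{stepsize-lin-3} follows.

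The main work, and the step I expect to be the real obstacle, is the second inequality in \eqref{stepsize-lin-2}. I would clear the denominator and substitute $\sigma = \frac{1-\theta}{\nu\theta}$ and $\theta\sigma = \frac{1-\theta}{\nu}$, reducing the inequality $L_{yy}\sigma + \theta\sigma(\alpha L_{yx} + L_{yy}) \leq 1$ to the equivalent form $\frac{1-\theta}{\theta}\bigl[(1+\theta)L_{yy} + \theta\alpha L_{yx}\bigr] \leq \nu$. The delicate point is that this is not literally the bound supplied by $\tilde{\theta}$, which gives only $\frac{1-\theta}{\theta}(\alpha L_{yx} + 2L_{yy}) < \nu$. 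The gap is closed by the elementary observation that $\theta < 1$ forces $(1+\theta)L_{yy} + \theta\alpha L_{yx} \leq 2L_{yy} + \alpha L_{yx}$, after which the $\tilde{\theta}$ bound dominates the expression and the desired inequality follows. Collecting the four verified conditions then completes the proof.
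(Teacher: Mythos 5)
Your proof is correct, and it takes a genuinely different --- and in fact cleaner --- route than the paper's. The paper treats the second inequality of \eqref{stepsize-lin-2} by rewriting it as a quadratic inequality in $\theta$ (after normalising by $\alpha L_{yx}+L_{yy}$, which forces a separate preliminary case $L_{yx}=L_{yy}=0$), computes the nonnegative root $\rho$ of the associated quadratic equation, and establishes the chain $\rho < \frac{\alpha L_{yx}+2L_{yy}}{\nu+\alpha L_{yx}+2L_{yy}} \leq \tilde\theta < \theta$, where the first strict inequality requires an algebraic verification that eventually reduces to $0 < 4\nu^{2}(\alpha L_{yx}+L_{yy})^{2}$; condition \eqref{stepsize-lin-3} is then deduced from a further lower bound on the same root $\rho$. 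You avoid the quadratic altogether: substituting $\theta\sigma = \frac{1-\theta}{\nu}$ turns both the second inequality of \eqref{stepsize-lin-2} and \eqref{stepsize-lin-3} into linear statements in $\frac{1-\theta}{\theta}$, and the single bound $\frac{1-\theta}{\theta}\left(\alpha L_{yx}+2L_{yy}\right) < \nu$ --- which is exactly what the second argument of the maximum in \eqref{theta-tilde} encodes --- closes both of them, using only $(1+\theta)L_{yy}+\theta\alpha L_{yx} \leq \alpha L_{yx}+2L_{yy}$ together with $L_{yy}\geq 0$ and $0<\theta<1$. Your argument needs no case distinction, is shorter, and makes transparent why $\tilde\theta$ is defined the way it is; what the paper's root computation buys in exchange is the sharp threshold $\rho$ for the quadratic inequality (showing that the admissible range of $\theta$ is in fact somewhat larger than $(\tilde\theta,1)$), but this sharpness is never exploited.
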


\begin{proof}
If $L_{yx} = L_{yy} =0$, then the conclusion follows immediately. Assume that $L_{yx} + L_{yy} >0$. It is easy to verify that definition~\eqref{theta-tilde} yields
	\begin{equation}
		0 < \tilde{\theta} < 1
	\end{equation}
	and that~\eqref{tau-sigma-lin-gap} is equivalent to~\eqref{stepsize-lin-1} where~\eqref{theta-lin} is ensured by~\eqref{theta}. Furthermore, plugging the specific form of the step sizes~\eqref{tau-sigma-lin-gap} into~\eqref{stepsize-lin-2} we obtain for the first inequality of~\eqref{stepsize-lin-2}
	\begin{equation}
		\frac{L_{yx}}{\alpha} \leq \frac{\mu \theta}{1 - \theta},
	\end{equation}
	which is equivalent to
	\begin{equation}
		\theta \geq \frac{L_{yx}}{\alpha \mu + L_{yx}}.
	\end{equation}
	Note that by~\eqref{theta} we have
	\begin{equation}
		0 \leq \frac{L_{yx}}{\alpha \mu + L_{yx}} \leq \tilde{\theta} < \theta < 1.
	\end{equation}
	Similarly, the second inequality of~\eqref{stepsize-lin-2} is equivalent to the following quadratic inequality
	\begin{equation}
		\theta^{2} - \frac{\alpha L_{yx} - \nu}{\alpha L_{yx} + L_{yy}} \theta - \frac{L_{yy}}{\alpha L_{yx} + L_{yy}} \geq 0.
	\end{equation}
The non negative solution of the associated quadratic equation reads
	\begin{equation}
\rho := \frac{1}{2}	\left( \frac{\alpha L_{yx} - \nu}{\alpha L_{yx} + L_{yy}} + \sqrt{ \left( \frac{\alpha L_{yx} - \nu}{\alpha L_{yx} + L_{yy}} \right)^{2} + \frac{4 L_{yy}}{\alpha L_{yx} + L_{yy}} } \right)
		\geq 0.
	\end{equation}
Since
	\begin{equation}
		0 \leq \rho < \frac{\alpha L_{yx} + 2L_{yy}}{\nu + \alpha L_{yx} + 2L_{yy}}  \leq \tilde{\theta} < \theta < 1,
	\end{equation}
the second inequality of~\eqref{stepsize-lin-2} is also fulfilled. In order to see that
\begin{equation*}
	\rho < \frac{\alpha L_{yx} + 2L_{yy}}{\nu + \alpha L_{yx} + 2L_{yy}},
\end{equation*}
we notice that this inequality is equivalent to
\begin{equation}
	\left( \frac{\alpha L_{yx} - \nu}{\alpha L_{yx} + L_{yy}} \right)^{2} + \frac{4 L_{yy}}{\alpha L_{yx} + L_{yy}}
	<
	\frac{\big( \nu^{2} + 2 \nu L_{yy} + (\alpha L_{yx} + 2 L_{yy})^{2} \big)^{2}}{(\nu + \alpha L_{yx} + 2L_{yy})^{2}(\alpha L_{yx} + L_{yy})^{2}},
\end{equation}
which holds if and only if
\begin{align*}
		 & \big( (\alpha L_{yx} - \nu)^{2} + 4 L_{yy}(\alpha L_{yx} + L_{yy}) \big)
			(\nu + \alpha L_{yx} + 2L_{yy})^{2}\\
		< & \ (\nu^{2} + 2 \nu L_{yy})^{2}
			+ 2(\nu^{2} + 2 \nu L_{yy})(\alpha L_{yx} + 2L_{yy})^{2}
			+ (\alpha L_{yx} + 2 L_{yy})^{4}
\end{align*}
or, equivalently,
\begin{equation*}
	0< 4 \nu^{2} (\alpha L_{yx} + L_{yy})^{2}.
\end{equation*}
For the remaining condition~\eqref{stepsize-lin-3} to hold we need to ensure
	\begin{equation}
		\theta > \frac{\alpha L_{yx} + L_{yy} - \nu}{\alpha L_{yx} + L_{yy}}.
	\end{equation}
For this we observe that
\begin{equation}
	\begin{split}
		\rho &\geq \frac{1}{2}
			\left(
				\frac{\alpha L_{yx} - \nu}{\alpha L_{yx} + L_{yy}}
				+ \sqrt{ \left( \frac{\alpha L_{yx} + 2L_{yy} - \nu}{\alpha L_{yx} + L_{yy}} \right)^{2} }
			\right)\\
		&\geq \frac{1}{2}
			\frac{\alpha L_{yx} - \nu + \alpha L_{yx} + 2L_{yy} - \nu}{\alpha L_{yx} + L_{yy}} = \frac{\alpha L_{yx} + L_{yy} - \nu}{\alpha L_{yx} + L_{yy}}.
	\end{split}
\end{equation}
In conclusion, we obtain the following chain of inequalities
	\begin{equation}
		\frac{\alpha L_{yx} + L_{yy} - \nu}{\alpha L_{yx} + L_{yy}} \leq \rho
		< \frac{\alpha L_{yx} + 2L_{yy}}{\nu + \alpha L_{yx} + 2L_{yy}}
		\leq \tilde{\theta} < \theta < 1,
	\end{equation}
	which  is satisfied by~\eqref{theta}.
\end{proof}

\subsection{Convergence results}\label{strconvex-strconcave}
Now we can combine the previous results and prove the convergence statements in the strongly convex-strongly concave setting.
\begin{theorem}\label{str-convex-str-concave}
	Let $ (x^{\ast}, y^{\ast}) \in \cH \times \cG $ be a saddle point of~\eqref{min-max}. Then for $ (x_{k}, y_{k})_{k \geq 0} $ being the sequence generated by OGAProx with the choice of parameters
	\begin{equation}
		\tau = \frac{1}{\mu} \frac{1 - \theta}{\theta},
		\hspace{5mm}
		\sigma = \frac{1}{\nu} \frac{1 - \theta}{\theta},
		\hspace{5mm}
		0 \leq \tilde{\theta} < \theta < 1,
	\end{equation}
	with
	\begin{equation}
		\tilde{\theta} =
			\max \left\{
				\frac{L_{yx}}{\alpha \mu + L_{yx}}, \,
				\frac{\alpha L_{yx} + 2 L_{yy}}{\nu + \alpha L_{yx} + 2L_{yy}} \right\},
	\end{equation}
for $\alpha >0$, we denote for $ K \geq 1 $
	\begin{equation}
		T_{K} = \sum_{k=0}^{K-1} \theta^{-k},
			\hspace{5mm}
		\bar{x}_{K} = \frac{1}{T_{K}} \sum_{k=0}^{K-1} \theta^{-k} x_{k+1},
			\hspace{5mm}
		\bar{y}_{K} = \frac{1}{T_{K}} \sum_{k=0}^{K-1} \theta^{-k} y_{k+1},
	\end{equation}
	for which the following holds
	\begin{equation*}
			0 \leq \theta \big( \Psi(\bar{x}_{K},y^{\ast}) - \Psi(x^{\ast},\bar{y}_{K}) \big) +  \frac{1}{2 \tau} \normsq{x^{\ast} - x_{K}} + \frac{1}{2 \tilde{\sigma}} \normsq{y^{\ast} - y_{K}} \leq \theta^{K} \left( \frac{1}{2 \tau} \normsq{x^{\ast} - x_{0}} + \frac{1}{2 \sigma} \normsq{y^{\ast} - y_{0}} \right),
	\end{equation*}
	where $\tilde{\sigma} := \frac{\sigma}{1 - \theta \sigma (\alpha L_{yx} + L_{yy})}$.
\end{theorem}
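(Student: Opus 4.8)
The plan is to build directly on the preliminary estimate~\eqref{lin-conv}, which was established under Assumption~\ref{A2}, so the first task is to confirm that the stated parameters fall under that assumption. The chosen $\tau$, $\sigma$, $\theta$ together with the auxiliary $\alpha > 0$ are exactly those analysed in Proposition~\ref{prop:stepsize-strconvex-strconcave}, which guarantees that~\eqref{stepsize-lin-1}--\eqref{stepsize-lin-3} all hold. In particular $1 - \theta \sigma (\alpha L_{yx} + L_{yy}) > 0$, so $\tilde{\sigma}$ from~\eqref{cuc} is a well-defined positive quantity and the term $\frac{1}{2\tilde{\sigma}} \normsq{y^{\ast} - y_{K}}$ is nonnegative. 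With Assumption~\ref{A2} in force I may therefore invoke~\eqref{lin-conv} with the arbitrary pair $(x,y)$ specialised to the saddle point $(x^{\ast}, y^{\ast})$.

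Next I would control the coefficient $T_{K} \theta^{K}$ multiplying the minimax gap. Since $t_{k} = \theta^{-k}$, the sum $T_{K} = \sum_{k=0}^{K-1} \theta^{-k}$ is geometric and one computes $T_{K} \theta^{K} = \theta \, \frac{1 - \theta^{K}}{1 - \theta} = \theta \big( 1 + \theta + \cdots + \theta^{K-1} \big)$. For every $K \geq 1$ and $0 < \theta < 1$ the bracketed factor is at least $1$, whence $T_{K} \theta^{K} \geq \theta$. The complementary ingredient is the nonnegativity of the gap: because $(x^{\ast}, y^{\ast})$ satisfies the saddle point inequalities~\eqref{def-saddle-point}, we have $\Psi(x^{\ast}, \bar{y}_{K}) \leq \Psi(x^{\ast}, y^{\ast}) \leq \Psi(\bar{x}_{K}, y^{\ast})$, so that $\Psi(\bar{x}_{K}, y^{\ast}) - \Psi(x^{\ast}, \bar{y}_{K}) \geq 0$.

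Combining these two observations lets me replace the coefficient $T_{K} \theta^{K}$ by the smaller constant $\theta$ on the left-hand side of~\eqref{lin-conv}, namely
\begin{equation*}
	\theta \big( \Psi(\bar{x}_{K}, y^{\ast}) - \Psi(x^{\ast}, \bar{y}_{K}) \big) \leq T_{K} \theta^{K} \big( \Psi(\bar{x}_{K}, y^{\ast}) - \Psi(x^{\ast}, \bar{y}_{K}) \big),
\end{equation*}
and feeding this into~\eqref{lin-conv} yields the asserted upper bound $\theta^{K} \big( \frac{1}{2\tau} \normsq{x^{\ast} - x_{0}} + \frac{1}{2\sigma} \normsq{y^{\ast} - y_{0}} \big)$. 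The lower bound $0 \leq \cdots$ is then immediate, since all three summands on the left are nonnegative: the gap by the saddle point property just used, and the two squared-norm terms because $\tau > 0$ and $\tilde{\sigma} > 0$.

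I expect no serious obstacle here, as essentially all of the analytic work is already contained in the derivation of~\eqref{lin-conv} and in Proposition~\ref{prop:stepsize-strconvex-strconcave}. The only point demanding a little care is the passage from the coefficient $T_{K} \theta^{K}$ to the clean constant $\theta$, which relies on simultaneously exploiting the geometric-series lower bound $T_{K} \theta^{K} \geq \theta$ and the sign of the minimax gap; neglecting either would break the chain of inequalities.
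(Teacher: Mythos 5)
Your proposal is correct and follows essentially the same route as the paper: specialise the preliminary estimate~\eqref{lin-conv} to the saddle point $(x^{\ast},y^{\ast})$, bound $T_{K}\theta^{K} \geq \theta$ via the geometric series, and use the nonnegativity of the minimax gap at a saddle point to replace the coefficient $T_{K}\theta^{K}$ by $\theta$. Your added remarks — explicitly invoking Proposition~\ref{prop:stepsize-strconvex-strconcave} to verify Assumption~\ref{A2}, and noting that the sign of the gap is what licenses shrinking the coefficient — are points the paper leaves implicit, but they do not change the argument.
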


\begin{proof}
	Let $ K \geq 1 $ and $ (x^{\ast}, y^{\ast}) \in \cH \times \cG $ be an arbitrary but fixed saddle point of~\eqref{min-max}. Writing~\eqref{lin-conv} for $ (x^{\ast}, y^{\ast}) $ we get
	\begin{align*}
			0 \leq & \ T_{K} \theta^{K} \big( \Psi(\bar{x}_{K},y^{\ast})-\Psi(x^{\ast},\bar{y}_{K}) \big) + \frac{1}{2 \tau} \normsq{x^{\ast} - x_{K}} + \frac{1}{2 \tilde{\sigma}} \normsq{y^{\ast} - y_{K}} \\
\leq & \ \theta^{K} \left( \frac{1}{2 \tau} \normsq{x^{\ast} - x_{0}} + \frac{1}{2 \sigma} \normsq{y^{\ast} - y_{0}} \right).
	\end{align*}
	Using
	\begin{equation}
		T_{K} = \sum_{k=0}^{K-1} \frac{1}{\theta^{k}} = \frac{1}{\theta^{K-1}} \frac{1 - \theta^{K}}{1 - \theta} \geq \frac{1}{\theta^{K-1}},
	\end{equation}
	finally we obtain for all $ K \geq 1 $
	\begin{equation*}
			0 \leq \theta \big( \Psi(\bar{x}_{K},y^{\ast})-\Psi(x^{\ast},\bar{y}_{K}) \big) +  \frac{1}{2 \tau} \normsq{x^{\ast} - x_{K}} + \frac{1}{2 \tilde{\sigma}} \normsq{y^{\ast} - y_{K}} \leq \theta^{K} \left( \frac{1}{2 \tau} \normsq{x^{\ast} - x_{0}} + \frac{1}{2 \sigma} \normsq{y^{\ast} - y_{0}} \right),
	\end{equation*}
	with $ 0 < \theta < 1 $ as defined in~\eqref{theta}.
\end{proof}

\section{Numerical experiments}\label{sec:numerics}
In this section we will treat three numerical applications of our method. The first one is of rather simple structure and has the purpose to highlight the convergence rates we obtained in the previous sections. The second one concerns multi kernel support vector machines to validate OGAProx on a more relevant application in practice, even though there are no theoretical guarantees for the ``metric'' reported there. The third numerical application addresses a classification problem incorporating minimax group fairness, which traces back to the solving of a minimax problem with nonsmooth coupling function.

\subsection{Nonsmooth-linear problem}
The first application we treat is to showcase the convergence rates we obtained in the previous sections and make a simple proof of concept. We look at the following nonsmooth-linear saddle point problem
\begin{equation}
	\label{nslSP}
	\min_{x \in \R^{d}} \, \max_{y \in \R^{n}} \Psi (x, y) :=
		\sprod{[x]_{+}}{Ay}
		- \left( \delta_{C}(y) + \frac{\nu}{2} \normsq{y} \right),
\end{equation}
with $ \nu \geq 0 $ and $ A \in \R^{d \times n} $, $ [ \ph ]_{+} $ being the component-wise positive part,
\begin{equation}
	[x]_{+} = {\big( \max \{ 0, x_{i} \} \big)}_{i=1}^{d},
\end{equation}
and $ C $ being the following convex polytope
\begin{equation}
	C := \{ y \in \R^{n} \ | \ A y \geqq 0 \}.
\end{equation}
For $ u = (u_{i})_{i=1}^{d} $, $ v = (v_{i})_{i=1}^{d} \in \R^{d} $ the relation $ u \geqq v $ denotes component-wise inequalities, namely,
\begin{equation}
	u \geqq v \: \Leftrightarrow \: u_{i} \geq v_{i} \quad \text{for } 1 \leq i \leq d.
\end{equation}

Then $ g: \R^{n} \to \R \cup \{ + \infty \} $ with
\begin{equation}
	g(y) := \delta_{C}(y) + \frac{\nu}{2} \normsq{y}
\end{equation}
is proper, lower semicontinuous and convex with modulus $ \nu \geq 0 $ and $ \dom g = C $. Moreover, $ \Phi : \R^{d} \times \R^{n} \to \R $ with
\begin{equation}
	\Phi (x, y) = \sum_{i = 1}^{d} \max \{ 0, x_{i} \} (Ay)_{i}
\end{equation}
has full domain, for all $ x \in \R^{d} $ we have that $ \Phi(x, \ph) $ is linear and for all $ y \in \dom g = C$ the function $ \Phi (\ph, y) $ is convex and continuous.

Furthermore, we obtain for all $ (x, y), \, (x', y') \in \R^{d} \times \dom g $
\begin{equation}
	\norm{\ny \Phi (x, y) - \ny \Phi (x', y')}
	= \norm{A^{T} \left( [x]_{+} - [x']_{+} \right)}
	\leq \norm{A} \norm{x - x'},
\end{equation}
hence~\eqref{cond-Phi_y} holds with $ L_{yx} = \norm{A} $ and $ L_{yy} = 0 $.

The algorithm~\eqref{alg_y}-\eqref{alg_x} iterates for $ k \geq 0 $
\begin{equation*}
		\left\{
		\begin{array}{rcl}
			v_{k} &=& y_{k} + \sigma_{k} \left[ (1 + \theta_{k}) \ny \Phi(x_{k}, y_{k}) - \theta_{k} \ny \Phi(x_{k-1}, y_{k-1})\right] = y_{k} + \sigma_{k} A^{T} \big( (1 + \theta_{k}) [x_{k}]_{+} - \theta_{k} [x_{k-1}]_{+} \big),\\
			y_{k+1}  &=& \prox{\sigma_{k} g}{v_{k}} = P_{C} \left( \frac{1}{1 + \nu \sigma_{k}} v_{k} \right),\\
			x_{k+1}  &=& \prox{\tau_{k} \Phi(\ph, y_{k+1})}{x_{k}},
		\end{array}
		\right.
\end{equation*}
where the calculation of the orthogonal projection on the set $C$ is a simple quadratic program and
\begin{equation}
	\prox{\tau \Phi (\ph, y)}{x} = \left( \prox{\tau (Ay)_{i} \max \{ 0, \ph \} }{x_{i}} \right)_{i = 1}^{d},
\end{equation}
where, for $i=1, ..., d$,
\begin{equation}
	\prox{\tau (Ay)_{i} \max \{ 0, \ph \} }{x_{i}}
	= \begin{cases}
		x_{i} & \text{if } x_{i} \leq 0, \vspace{1mm}\\
		0 & \text{if } 0 < x_{i} \leq \tau (Ay)_{i}, \vspace{1mm}\\
		x_{i} - \tau (Ay)_{i} & \text{if } x_{i} > \tau (Ay)_{i}.
	\end{cases}
\end{equation}

By writing the first order optimality conditions and using Lagrange duality we obtain the following characterisation.
\begin{equation}
	\begin{split}
		(x^{\ast}, y^{\ast}) \text{ is a saddle point of~\eqref{nslSP}}
			& \Leftrightarrow
				\left\{
					\begin{array}{l}
						0 \in \partial
							\left(
								\sprod{[ \ph ]_{+}}{Ay^{\ast}} -  \delta_{C}(y^{\ast}) - \frac{\nu}{2} \normsq{y^{\ast}}
							\right) (x^{\ast}) \vspace{1mm}\\
						0 \in \partial
							\left(
								\minus \sprod{A^{T}[x^{\ast}]_{+}}{\ph} + \delta_{C}(\ph) + \frac{\nu}{2} \normsq{\ph}
							\right)(y^{\ast})
					\end{array}
				\right.\\
			& \Leftrightarrow
				\left\{
					\begin{array}{l}
						0 \in \sum_{i = 1}^{d} (Ay^{\ast})_{i} \, \partial \max \{ 0, \ph \} (x^{\ast}_{i})\vspace{1mm}\\
						A^{T}[x^{\ast}]_{+} - \nu y^{\ast} \in N_{C}(y^{\ast})
					\end{array}
				\right.\\
			& \Leftrightarrow
				\left\{
					\begin{array}{l}
						\forall i = 1, \ldots, d :
							\big( (Ay^{\ast})_{i} > 0 \text{ and } x^{\ast}_{i} \leq 0 \big)
							\text{ or }\\
							\hspace{5mm} \big( (Ay^{\ast})_{i} = 0 \text{ and } x^{\ast}_{i} \in \R \big)
							\vspace{1mm}\\
						\sprod{A^{T} [x^{\ast}]_{+} - \nu y^{\ast}}{y^{\ast}} = 0
						\vspace{1mm}\\
						\nu y^{\ast} - A^{T} [x^{\ast}]_{+} \in A^{T} \left( \R^{d}_{+} \right)
					\end{array}
				\right.\\
			& \Leftrightarrow
				\left\{
					\begin{array}{l}
						\forall i = 1, \ldots, d :
							\big( (Ay^{\ast})_{i} > 0 \text{ and } x^{\ast}_{i} \leq 0 \big)
							\text{ or }\\
							\hspace{5mm} \big( (Ay^{\ast})_{i} = 0 \text{ and } x^{\ast}_{i} \in \R \big)
							\vspace{1mm}\\
						\nu \normsq{y^{\ast}} =
							\sprod{A^{T} [x^{\ast}]_{+} }{y^{\ast}} =
							\sprod{[x^{\ast}]_{+}}{A y^{\ast}} = 0
						\vspace{1mm}\\
						\nu y^{\ast} \in A^{T} \left([x^{\ast}]_{+} + \R^{d}_{+} \right).
					\end{array}
				\right.
	\end{split}
\end{equation}

This means, that for $ \nu = 0 $ we obtain
\begin{equation}
	(x^{\ast}, y^{\ast}) \text{ is a saddle point of~\eqref{nslSP}}
		\Leftrightarrow
			\left\{
				\begin{array}{l}
					\forall i = 1, \ldots, d :
						\big( (Ay^{\ast})_{i} > 0 \text{ and } x^{\ast}_{i} \leq 0 \big)
						\text{ or }\\
						\hspace{5mm} \big( (Ay^{\ast})_{i} = 0 \text{ and } x^{\ast}_{i} \in \R \big)
						\vspace{1mm}\\
					0 \in A^{T} \left([x^{\ast}]_{+} + \R^{d}_{+} \right)
				\end{array},
			\right.
\end{equation}
whereas for $ \nu > 0 $
\begin{equation}
	(x^{\ast}, y^{\ast}) \text{ is a saddle point of~\eqref{nslSP}}
		\Leftrightarrow
			\left\{
				\begin{array}{l}
					y^{\ast} = 0\vspace{1mm}\\
					0 \in A^{T} \left([x^{\ast}]_{+} + \R^{d}_{+} \right)
				\end{array}.
			\right.
\end{equation}

If $ A \in \R^{d \times n} $ has full row rank the inclusion
\begin{equation}
	0 \in A^{T} \left([x^{\ast}]_{+} + \R^{d}_{+} \right)
\end{equation}
is equivalent to
\begin{equation}
	x^{\ast} \leqq 0.
\end{equation}

\begin{figure}[h]
	\centering
	\includegraphics[width=0.48\linewidth]{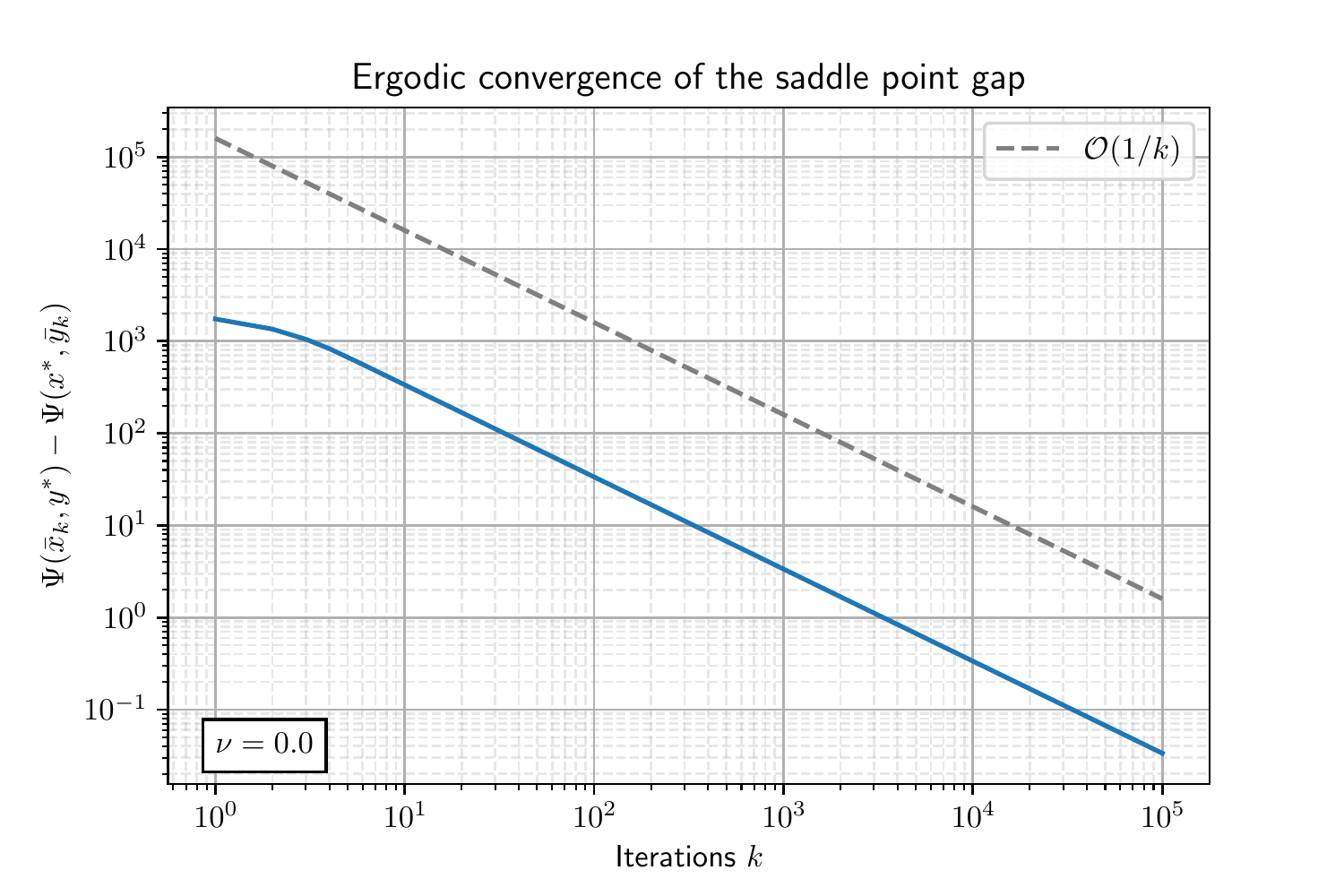}
	\caption{Convergence of the minimax gap like $ \mathcal{O} (\frac{1}{K}) $ for $ \nu = 0 $.}
	\label{fig:toy_nu=0}
\end{figure}

For the experiments we choose dimensions $ d = 250 $ and $ n = 350 $. For easier validation of the solution $ x^{\ast} $ we ensure that the matrix $ A \in \R^{d \times n} $ with entries drawn from a uniform distribution on the interval $ [ \minus 3, 3 ] $ has full row rank. The starting points $ x_{0} = x_{\minus 1} \in \R^{d} $ and $ y_{0} \in \R^{n} $ have entries drawn from a uniform distribution on the interval $ [ \minus 5, 5] $.

In the case $ \nu = 0 $, i.e., the regulariser $ g $ being merely convex, we proved weak asymptotic convergence of the iterates to some saddle point $ (x^{\ast}, y^{\ast}) $ and convergence of the minimax gap at the ergodic sequences to zero like $ \mathcal{O} (\frac{1}{K}) $ for any saddle point. The latter is illustrated in Figure~\ref{fig:toy_nu=0} for $ (x^{\ast}, y^{\ast}) \in \R^{d} \times \R^{n} $ with $ x^{\ast} \leqq 0 $ and $ y^{\ast} \in C  $ with $ y^{\ast} \neq 0 $ for a single random initialisation.

\begin{figure}[h]
	\centering
	\includegraphics[width=0.48\linewidth]{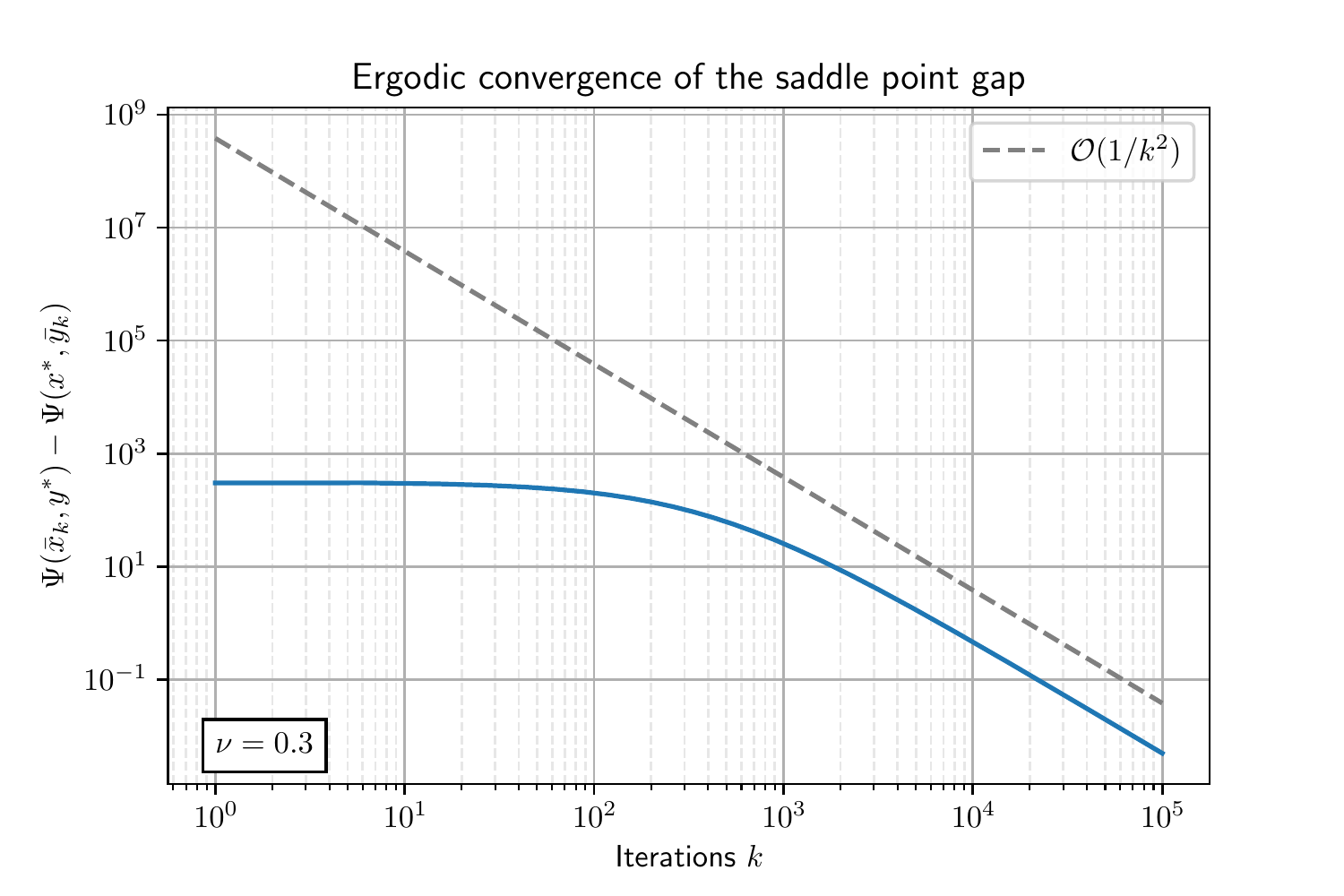}
	\includegraphics[width=0.48\linewidth]{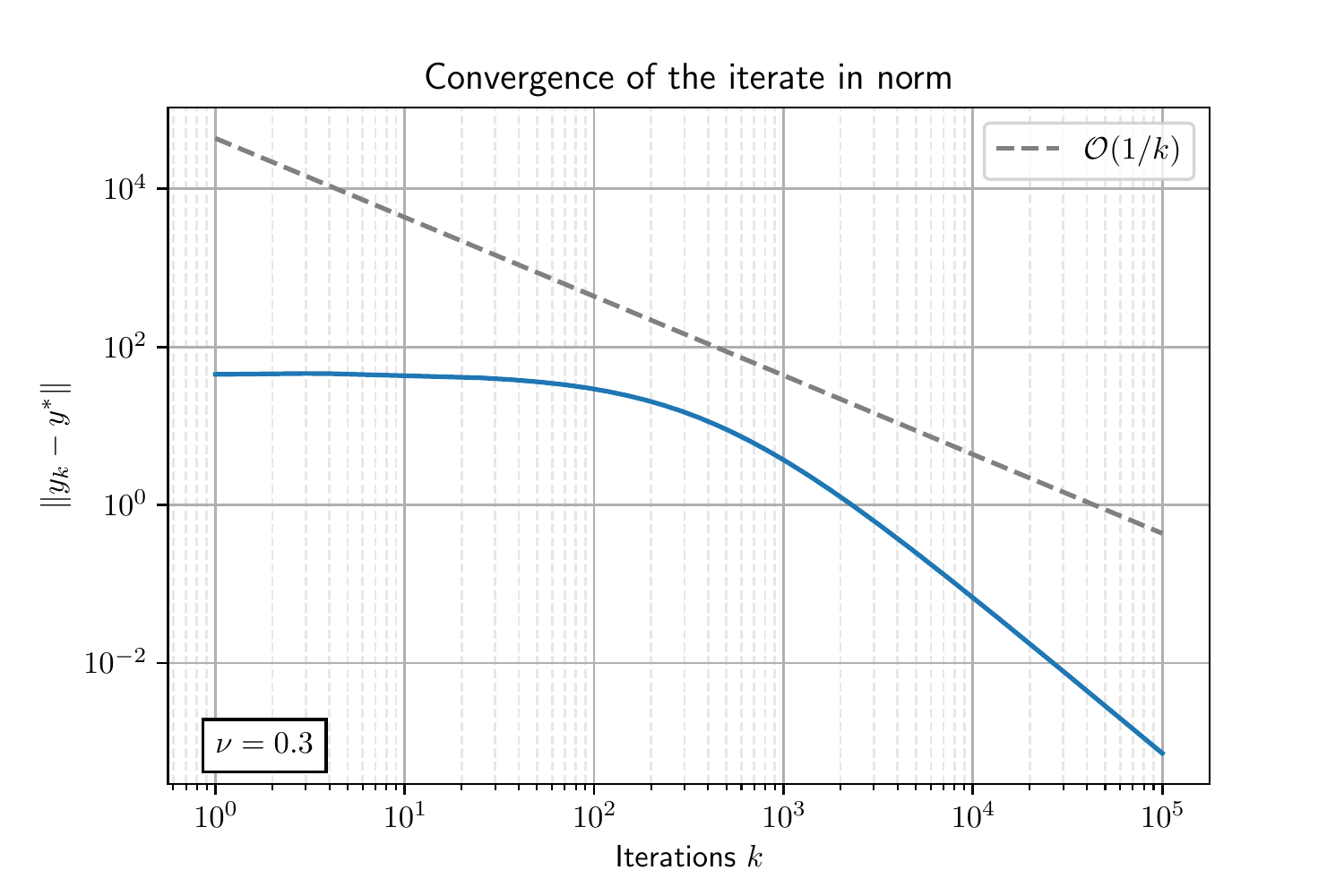}
	\caption{Convergence of the minimax gap like $ \mathcal{O} (\frac{1}{K^{2}}) $ and of the sequence $(y_{k})_{k \geq 0} $ in norm like $ \mathcal{O} (\frac{1}{K}) $ for $ \nu > 0 $.}
	\label{fig:toy_nu>0}
\end{figure}

Let $ (x^{\ast}, y^{\ast}) \in \R^{d} \times \R^{n} $ be a saddle point. In the case $ \nu > 0 $, i.e., the regulariser $ g $ being $ \nu $-strongly convex, we proved strong non-asymptotic convergence of the sequence $ (y_{k})_{k \geq 0} \to y^{\ast} $ like $ \mathcal{O} (\frac{1}{K}) $ and convergence of the minimax gap at the ergodic sequences to zero like $ \mathcal{O} (\frac{1}{K^{2}}) $. The numerical behaviour of our method validating the theoretical claims for $ \nu > 0 $ is highlighted in Figure~\ref{fig:toy_nu>0}. The plots shown are for a single random initialisation and with the choice $ \nu = \frac{3}{10} $.

\subsection{Multi kernel support vector machine}
The second application to test our method in practice is to learn a combined kernel matrix for a multi kernel \emph{support vector machine} (SVM). We have a set of labelled training data
\begin{equation}
	S_{n}
	= \{ (a_{1}, b_{1}), \ldots, (a_{n}, b_{n}) \}
	\subseteq \R^{m} \times \{ -1, 1 \},
\end{equation}
where we call $ b = (b_{i})_{i = 1}^{n} $, and a set of unlabelled test data
\begin{equation}
	T_{l}
	= \{ a_{n + 1}, \ldots, a_{n + l} \}
	\subseteq \R^{m}.
\end{equation}
We consider embeddings of the data according to a kernel function $ \kappa : \R^{m} \times \R^{m} \to \R $ with the corresponding symmetric and positive semidefinite kernel matrix
\begin{equation}
	\mathcal{K} =
	\left(
		\begin{array}{cc}
			\mathcal{K}^{tr} & \mathcal{K}^{tr, t}\\
			\mathcal{K}^{t,tr} & \mathcal{K}^{t}
		\end{array}
	\right),
\end{equation}
where $ \mathcal{K}_{ij} = \kappa(a_{i}, a_{j}) $ for $ i, \, j = 1, \ldots, n, n + 1, \ldots, n+l $.

In the following $ e $ is a vector  of appropriate size consisting of ones. According to~\cite{MKSVM} the problem of interest is
\begin{equation}
	\label{mksvm-original}
	\min_{\substack{\mathcal{K} \in \mathbb{K} \\ \tr (\mathcal{K}) = c }}
	\hspace{2mm}
	\max_{\substack{0 \leqq \alpha \leqq C \\ \sprod{\alpha}{b} = 0}}
	\,
	\alpha^{T}e
	- \frac{1}{2} \alpha^{T} G(\mathcal{K}^{tr}) \alpha
	- \frac{\nu}{2} \normsq{\alpha}_{2},
\end{equation}
where $ \mathbb{K} $ is the model class of kernel matrices, $ c \in (0, +\infty) $, $ C \in (0, +\infty] $ and $ \nu \in [0, +\infty) $ are model parameters and we define $ G(\mathcal{K}^{tr}) := \diag(b) \mathcal{K}^{tr} \diag(b) $.

The set $ \mathbb{K} $ is restricted to be the set of positive semidefinite matrices that can be written as a non negative linear combination of kernel matrices $ \mathcal{K}_{1}, \ldots, \mathcal{K}_{d} $, i.e.,
\begin{equation}
	\mathbb{K} =
	\left\{
		\mathcal{K} \in S^m_+ \ \bigg | \
		\mathcal{K} = \sum_{i=1}^{d} \eta_{i} \mathcal{K}_{i}, \,
		\eta_{i} \geq 0 \text{ for } i=1, ..., d
	\right\}.
\end{equation}
With this choice~\eqref{mksvm-original} becomes
\begin{equation}
	\label{mksvm-2}
	\min_{\substack{\sprod{\eta}{r} = c \\ \eta \geqq 0 }}
	\hspace{2mm}
	\max_{\substack{0 \leqq \alpha \leqq C \\ \sprod{\alpha}{b} = 0}}
	\,
	\alpha^{T}e
	- \frac{1}{2} \sum_{i=1}^{d} \eta_{i} \alpha^{T} G(\mathcal{K}^{tr}_{i}) \alpha
	- \frac{\nu}{2} \normsq{\alpha},
\end{equation}
where $ \eta = (\eta_{i})_{i=1}^{d} $ and $ r = (r_{i})_{i=1}^{d} $ with $ r_{i} = \tr (\mathcal{K}_{i}) $ for $i=1, ..., d$. Assume $ (\eta^{\ast}, \alpha^{\ast}) \in \R^{d} \times \R^{n} $ to be a saddle point of~\eqref{mksvm-2} and write
\begin{equation}
	\mathcal{K}^{\ast} = \sum_{j=1}^{d} \eta^{\ast}_{j} \mathcal{K}_{j}.
\end{equation}
Following the considerations of~\cite{Aybat} we compute for $ a_{k} \in T_{l} $ with $ k \in \{ n+1, \ldots, n+l \} $,
\begin{equation}
	\label{mksvm-predict}
	\mathcal{L} (a_{k})
		= \sgn \left(
			\sum_{i = 1}^{n} b_{i} \alpha^{\ast}_{i} \mathcal{K}^{\ast}_{ik} + \gamma
		\right)
		= \sgn \left(
			\sum_{i = 1}^{n} \sum_{j=1}^{d} b_{i} \alpha^{\ast}_{i} \eta^{\ast}_{j} \left( \mathcal{K}_{j} \right)_{ik} + \gamma
		\right),
\end{equation}
with
\begin{equation}
	\gamma
		= b_{j_{0}} (1 - \nu \alpha^{\ast}_{j_{0}})
			- \sum_{i=1}^{n} b_{i} \alpha^{\ast}_{i} \mathcal{K}^{\ast}_{i j_{0}}
		= b_{j_{0}} (1 - \nu \alpha^{\ast}_{j_{0}})
			- \sum_{i=1}^{n} \sum_{j=1}^{d} b_{i} \alpha^{\ast}_{i} \eta^{\ast}_{j} \left( \mathcal{K}_{j} \right)_{ij_{0}},
\end{equation}
for some $ j_{0} \in \{ 1, \ldots, n \} $ such that $ 0 < \alpha^{\ast}_{j_{0}} < C $.

After writing $ x_{i} = \frac{r_{i} \eta_{i}}{c} $ for $i=1, ..., d$ and augmenting the objective with an additional (strongly) convex penalisation term, we obtain
\begin{equation}
	\label{mksvm}
	\min_{x \in \R^{d}} \max_{y \in \R^{n}} \,
	\delta_{\Delta}(x) + \frac{\mu}{2} \normsq{x}
	- \frac{1}{2} \sum_{i=1}^{d} x_{i} y^{T} M_{i} y
	+ y^{T} e
	- \left (\delta_{Y}(y) + \frac{\nu}{2} \normsq{y} \right),
\end{equation}
where $ \mu \geq 0 $ and $ M_{i} := \frac{c}{r_{i}} G(\mathcal{K}_{i}^{tr}) $ for $i=1, ..., d$,
\begin{equation}
	\Delta := \{ x \in \R^{d} \ | \ x \geqq 0, \, \sprod{x}{e} = 1 \}
\end{equation}
is the $ m $-dimensional unit simplex and
\begin{equation}
	Y := \{ y \in \R^{n} \ | \ 0 \leqq y \leqq C, \, \sprod{y}{b} = 0 \}
\end{equation}
is the intersection of a box and a hyperplane.

In the notation of~\eqref{min-max} we have $ \Phi : \R^{d} \times \R^{n} \to \R \cup \{ + \infty \} $ defined by
\begin{equation}
	\Phi (x, y) =
		\delta_{\Delta}(x)
		+ \frac{\mu}{2} \normsq{x}
		- \frac{1}{2} \sum_{i=1}^{d} x_{i} y^{T} M_{i} y
		+ y^{T} e,
\end{equation}
and $ g: \R^{n} \to \R \cup \{ +\infty \} $ given by
\begin{equation}
	g(y) = \delta_{Y}(y) + \frac{\nu}{2} \normsq{y}.
\end{equation}
We see that $ \Phi $ and $g$ satisfy the assumptions considered for problem \eqref{min-max}.

The algorithm~\eqref{alg_y}-\eqref{alg_x} iterates as follows for $ k \geq 0 $
\begin{equation}
	\begin{split}
		\left\{
		\begin{array}{rcl}
			v_{k} &=& y_{k} + \sigma_{k} \left[ (1 + \theta_{k}) \ny \Phi(x_{k}, y_{k}) - \theta_{k} \ny \Phi(x_{k-1}, y_{k-1})\right],\vspace{1mm}\\
			y_{k+1}  &=& \prox{\sigma_{k} g}{v_{k}} = P_{Y} \left( \frac{1}{1 + \nu \sigma_{k}} v_{k} \right),\vspace{1mm}\\
			x_{k+1}  &=& \prox{\tau_{k} \Phi(\ph, y_{k+1})}{x_{k}} = P_{\Delta} \left( \frac{1}{1 + \mu \tau_{k}} (x_{k} + \tau_{k} \xi^{y_{k+1}}) \right),
		\end{array}
		\right.
	\end{split}
\end{equation}
where
\begin{equation}
	\ny \Phi(x, y)
		= \minus \left( \frac{1}{2} \sum_{i = 1}^{d} x_{i} (M_{i} + M_{i}^{T}) \right)y + e
		= \minus \left( \sum_{i = 1}^{d} x_{i} M_{i} \right)y + e \ \mbox{ for }(x,y) \in \Delta \times \R^n
\end{equation}
and
\begin{equation}
	\xi^{y} := \left( \frac{1}{2} y^{T} M_{i} y \right)_{i=1}^{d}.
\end{equation}

To determine the correct step sizes and momentum parameter, we need to find Lipschitz constants for $ \ny \Phi $, i.e., $ L_{yx} $, $ L_{yy} \geq 0 $ such that~\eqref{cond-Phi_y} holds. Recall, that we require for all $ (x,y), \, (x',y') \in \pr_{\cH} (\dom \Phi) \times \dom g $
\begin{equation}
	\norm{\ny \Phi (x, y) - \ny \Phi (x', y')} \leq L_{yx} \norm{x-x'} + L_{yy} \norm{y - y'},
\end{equation}
with $ \pr_{\cH} (\dom \Phi) = \Delta $ and $ \dom g = Y $.

Let $ (x,y), \, (x',y') \in \Delta \times Y $. Then
\begin{equation}
	\begin{split}
		\norm{\ny \Phi(x, y) - \ny \Phi(x', y')} &= \norm{
				- \sum_{i = 1}^{d} x_{i} M_{i} y + e
				+ \sum_{i = 1}^{d} x'_{i} M_{i} y' - e}\\
			&= \norm{
				\sum_{i = 1}^{d} x_{i} M_{i} y'
				- \sum_{i = 1}^{d} x_{i} M_{i} y
				+ \sum_{i = 1}^{d} x'_{i} M_{i} y'
				- \sum_{i = 1}^{d} x_{i} M_{i} y'}\\
			&\leq \norm{\sum_{i = 1}^{d} x_{i} M_{i} (y - y')}
				+ \norm{\sum_{i = 1}^{d} (x_{i} - x'_{i}) M_{i} y'}\\
			&\leq \sum_{i = 1}^{d} \lvert x_{i} \rvert \norm{M_{i}} \norm{y - y'}
				+ \sum_{i = 1}^{d} \lvert x_{i} - x'_{i} \rvert \norm{M_{i}} \norm{y'}\\
			&\leq \left( \norm{x}_{1} \max_{1 \leq i \leq d} \norm{M_{i}} \right) \norm{y - y'}
				+  \left( \norm{y'} \max_{1 \leq i \leq d} \norm{M_{i}} \right) \norm{x - x'}_{1}\\
			&\leq \left( \norm{x}_{1} \max_{1 \leq i \leq d} \norm{M_{i}} \right) \norm{y - y'}
				+  \left( \norm{y'} \sqrt{d} \max_{1 \leq i \leq d} \norm{M_{i}} \right) \norm{x - x'}.
	\end{split}
\end{equation}
As $ x \in \Delta $, we have $ \norm{x}_{1} = 1 $ and since $ y' \in Y $ we get $ \norm{y'} \leq C \sqrt{n} $. Thus we obtain
\begin{equation}
	\norm{\ny \Phi(x, y) - \ny \Phi(x', y')} \leq L_{yx} \norm{x-x'} + L_{yy} \norm{y - y'},
\end{equation}
with
\begin{equation}
	L_{yx} = C \sqrt{d n} \max_{1 \leq i \leq d} \norm{M_{i}},
	\hspace{5mm}
	L_{yy} = \max_{1 \leq i \leq d} \norm{M_{i}}.
\end{equation}

For our experiments we use four different data sets from the ``UCI Machine Learning Repository''~\cite{uci}: the (original) Wisconsin \emph{breast cancer} dataset~\cite{breast} (699 total observations including 16 incomplete examples; 9 features), the Statlog \emph{heart disease} data set (270 observations; 13 features), the \emph{Ionosphere} data set (351 observations; 33 features) and the Connectionist Bench \emph{Sonar} data set (208 observations; 60 features). All the data sets are normalised such that each feature column has zero mean and standard deviation equal to one.

Furthermore we take $ d = 3 $ given kernel functions, namely a polynomial kernel function $ k_{1}(a, a') = (1 + a^{T}a')^{2} $ of degree 2 for $ \mathcal{K}_{1} $, a Gaussian kernel function $ k_{2}(a, a') = \exp (\minus \frac{1}{2}(a - a')^{T}(a - a')/\frac{1}{10}) $ for $ \mathcal{K}_{2} $ and a linear kernel function $ k_{3} (a, a') = a^{T}a' $ for $ \mathcal{K}_{3} $. The resulting kernel matrices are normalised according to~\cite[Section~4.8]{MKSVM}, giving
\begin{equation}
	r_{i} = \tr (\mathcal{K}_{i}) = n + l.
\end{equation}
The model parameter $ c > 0 $ is chosen to be
\begin{equation}
	c = \sum_{i = 1}^{d} r_{i} = d(n + l),
\end{equation}
and we set $ C = 1 $.

On this application we test the three proposed versions of OGAProx. We refer to the version of OGAProx with constant parameters from Section~\ref{convex-concave-setting} as OGAProx-C1, to the one with adaptive parameters from Section~\ref{convex-strconcave} as OGAProx-A and to the one from Section~\ref{strconvex-strconcave} giving linear convergence with constant parameters as OGAProx-C2. The results are compared with those obtained by APD1 and APD2 from~\cite{Aybat}. In their experiments on multi kernel SVMs they showed superiority of their method compared to \emph{Mirror Prox} by~\cite{MirrorProx} in terms of accuracy, runtime and relative error. They also argued that with APD they are able to obtain decent approximations of solutions of~\eqref{mksvm-2} by interior point methods such as MOSEK~\cite{mosek} taking about the same amount of runtime.

The main difference between APD and our method OGAProx is that for the first a gradient step in the first component is employed whereas for the latter a purely proximal step is used. To be able to employ APD2 with adaptive parameters for $ \nu > 0 $, the roles of $ x $ and $ y $ in~\eqref{mksvm} have to be switched, giving a different method than OGAProx-A. The runtime of both methods however is still very similar as both use the same number of gradient computations/storages and projections per iteration.

All algorithms are initialised with
\begin{equation}
	x_{0} = x_{-1} = \frac{1}{d} e,
		\hspace{5mm}
	y_{0} = y_{-1} = 0.
\end{equation}

Each data set is randomly partitioned into 80~\% training and 20~\% test set. The test set is used to judge the quality of the obtained model by predicting the labels via~\eqref{mksvm-predict} and computing the resulting test set accuracy (TSA).
Note that the TSA is not guaranteed to converge or increase at all by our theoretical considerations, which only state convergence of the iterates and in terms of function values.
The reported TSA values are the average over 10 random partitions. Due to occasionally occurring rather dramatic deflections of the TSA we actually compute 12 runs, but remove minimum and maximum values before calculating the mean.

\subsubsection{1-norm soft margin classifier}
For $ \mu = \nu = 0 $ the formulation~\eqref{mksvm-2} realises the so-called 1-norm soft margin classifier. In this case $ g $ is merely convex and we can only use the constant parameter choice from Section~\ref{convex-concave-setting} with the name OGAProx-C1. We compare the results with those obtained by APD1 from~\cite{Aybat}.

\begin{table}[h!]
	\centering
	\begin{tabular}{@{\extracolsep{4pt}}llccccc@{}}
	    \toprule
	    & & \multicolumn{5}{c}{TSA at iteration $ k $} \\
	    Method & Data set & $ k = 250 $ & $ k = 500 $ & $ k = 1000 $ & $ k = 1500 $ & $ k = 2000 $ \\
	    \midrule
	    \multirow{4}{*}{OGAProx-C1}
	    & Breast cancer & 97.15 & 97.37 & 97.08 & 93.94 & \textbf{97.45} \\
	    & Heart disease & 74.63 & 74.07 & 80.00 & 81.30 & \textbf{82.78} \\
	    & Ionosphere    & 70.85 & 85.35 & 90.28 & 87.46 & \textbf{93.24} \\
	    & Sonar         & 70.00 & 75.24 & 83.81 & 84.52 & \textbf{85.95}\vspace{3mm} \\
		\multirow{4}{*}{APD1}
		& Breast cancer & 97.23 & 97.37 & \textbf{97.45} & 94.01 & \textbf{97.45} \\
		& Heart disease & 74.63 & 72.59 & 81.85 & 80.74 & 82.41 \\
		& Ionosphere    & 70.85 & 85.35 & 85.49 & 88.73 & 92.68 \\
		& Sonar         & 70.00 & 74.76 & 81.67 & 84.76 & 84.52 \\
		\bottomrule
	\end{tabular}
	\vspace{0.2cm}
	\caption{
		TSA of 1-norm soft margin classifier ($ \mu = 0 $, $ \nu = 0 $, $ C = 1 $) trained with OGAProx-C1 and APD1, averaged over 10 random partitions.}
	\label{tab:convex-concave}
\end{table}

In the case of 1-norm soft margin classifier the results reported in Table~\ref{tab:convex-concave} paint a clear picture. OGAProx outperforms APD on three out of four data sets and ties on one data set, achieving maximum TSA values of 97.45~\%, 82.78~\%, 93.24~\% and 85.95~\% on Breast cancer, Heart disease, Ionosphere and Sonar, respectively.

\subsubsection{2-norm soft margin classifier}
For $ \mu = 0 $ and $ \nu > 0 $ from~\eqref{mksvm-2} we obtain the so-called 2-norm soft margin classifier with $ C = 1 $. In this case $ g $ is $ \nu $-strongly convex and we can use both parameter choices from Section~\ref{convex-concave-setting} and the one from Section~\ref{convex-strconcave} giving OGAProx-C1 and OGAProx-A, respectively. This time we compare the results with those obtained by APD1 as well as APD2 from~\cite{Aybat}.

\begin{table}[h!]
	\centering
	\begin{tabular}{@{\extracolsep{4pt}}llccccc@{}}
	    \toprule
	    & & \multicolumn{5}{c}{TSA at iteration $ k $} \\
	    Method & Data set & $ k = 250 $ & $ k = 500 $ & $ k = 1000 $ & $ k = 1500 $ & $ k = 2000 $ \\
	    \midrule
	    \multirow{4}{*}{OGAProx-C1}
	    & Breast cancer & 97.15 & 97.37 & 97.15 & \textbf{97.45} & 97.15 \\
	    & Heart disease & 75.19 & 75.00 & 77.78 & 83.52 & 83.52 \\
	    & Ionosphere    & 70.99 & 85.35 & 89.86 & 87.89 & 91.27 \\
	    & Sonar         & 70.71 & 77.86 & 81.90 & 85.71 & \textbf{86.19}\vspace{3mm} \\
		\multirow{4}{*}{APD1}
		& Breast cancer & 97.23 & 97.37 & 97.30 & 97.37 & 97.37 \\
		& Heart disease & 75.37 & 67.78 & 80.74 & 82.22 & \textbf{84.81} \\
		& Ionosphere    & 71.27 & 85.35 & 88.87 & 89.72 & \textbf{92.39} \\
		& Sonar         & 70.48 & 76.43 & 83.33 & 84.76 & 85.71 \\
		\midrule
		\multirow{4}{*}{OGAProx-A}
		& Breast cancer & 97.15 & 97.37 & 97.37 & 97.45 & 97.45 \\
		& Heart disease & 76.11 & 73.70 & 83.70 & 81.30 & \textbf{84.26} \\
		& Ionosphere    & 70.85 & 85.21 & 86.34 & 90.42 & \textbf{93.52} \\
		& Sonar         & 70.48 & 76.90 & 83.33 & 82.62 & 84.76\vspace{3mm} \\
		\multirow{4}{*}{APD2}
		& Breast cancer & 97.23 & 97.37 & \textbf{97.59} & 97.01 & 96.72 \\
		& Heart disease & 76.11 & 71.30 & 81.48 & 78.70 & 83.15 \\
		& Ionosphere    & 71.13 & 85.35 & 84.79 & 84.93 & 90.42 \\
		& Sonar         & 70.24 & 75.95 & 84.05 & 84.52 & \textbf{86.19} \\
		\bottomrule
	\end{tabular}
	\vspace{0.2cm}
	\caption{
		TSA of 2-norm soft margin classifier ($ \mu = 0 $, $ \nu = \frac{1}{2} $, $ C = 1 $) trained with OGAProx-C1, OGAProx-A, APD1 and APD2, averaged over 10 random partitions.}
	\label{tab:convex-strconcave}
\end{table}

We see in Table~\ref{tab:convex-strconcave} that the situation for the 2-norm soft margin classifier is more diverse than previously with the 1-norm soft margin classifier. Comparing the two constant methods -- OGAProx-C1 and APD1 -- with each other, as well as the two adaptive methods -- OGAProx-A and APD2 -- we see that in both cases two out of four times OGAProx is better than APD and vice versa.
Notice that the two data sets with in general lower TSA, namely Heart disease and Sonar, seem to benefit from the regularising effect of $ \nu > 0 $, while those with already very good results on the other hand do not, compared to the results of the 1-norm soft margin classifier with $ \nu = 0 $.
In addition note that the adaptive variant OGAProx-A improves on the result of OGAProx-C1 on three out of four data sets.

\subsubsection{Regularised 2-norm soft margin classifier}
For $ \mu > 0 $ and $ \nu > 0 $ from~\eqref{mksvm-2} we again obtain the so-called 2-norm soft margin classifier with $ C = 1 $, this time, however, in a regularised version. Now not only  $ g $ is strongly convex, but also $ \Phi(\ph, y) $ and we can use all our parameter choices from Section~\ref{convex-concave-setting}, Section~\ref{convex-strconcave} and Section~\ref{strconvex-strconcave} yielding OGAProx-C1, OGAProx-A and OGAProx-C2, respectively. Once more we compare the results with those obtained by APD1 as well as APD2 from~\cite{Aybat}, pointing out that that OGAProx-C2 has no APD counterpart harnessing the additional strong convexity of the problem.

\begin{table}[h!]
	\centering
	\begin{tabular}{@{\extracolsep{4pt}}llccccc@{}}
	    \toprule
	    & & \multicolumn{5}{c}{TSA at iteration $ k $} \\
	    Method & Data set & $ k = 250 $ & $ k = 500 $ & $ k = 1000 $ & $ k = 1500 $ & $ k = 2000 $ \\
	    \midrule
	    \multirow{4}{*}{OGAProx-C1}
	    & Breast cancer & 97.15 & 97.37 & 97.15 & \textbf{97.52} & 97.45 \\
	    & Heart disease & 75.19 & 73.52 & 77.22 & 83.15 & 83.70 \\
	    & Ionosphere    & 70.99 & 85.35 & 87.89 & 91.41 & \textbf{91.97} \\
	    & Sonar         & 70.48 & 78.81 & 83.33 & 84.76 & \textbf{85.95}\vspace{3mm} \\
		\multirow{4}{*}{APD1}
		& Breast cancer & 97.23 & 97.37 & 97.37 & 97.01 & 97.30 \\
		& Heart disease & 75.19 & 68.89 & 75.56 & 79.81 & \textbf{84.07} \\
		& Ionosphere    & 71.27 & 85.35 & 86.06 & 89.15 & 91.69 \\
		& Sonar         & 70.71 & 76.43 & 83.10 & 85.48 & 85.48 \\
		\midrule
		\multirow{4}{*}{OGAProx-A}
		& Breast cancer & 97.15 & 97.37 & 97.45 & 97.37 & 97.30 \\
		& Heart disease & 76.11 & 70.93 & 82.78 & 80.74 & 83.52 \\
		& Ionosphere    & 70.85 & 85.21 & 85.92 & 89.86 & \textbf{93.38} \\
		& Sonar         & 70.24 & 76.43 & 82.86 & 86.19 & 86.19\vspace{3mm} \\
		\multirow{4}{*}{APD2}
		& Breast cancer & 97.23 & 97.37 & 97.45 & 94.53 & \textbf{97.52} \\
		& Heart disease & 76.11 & 71.67 & 80.00 & 79.26 & \textbf{83.52} \\
		& Ionosphere    & 71.13 & 85.35 & 86.90 & 92.39 & 91.13 \\
		& Sonar         & 70.24 & 75.00 & 82.62 & 84.52 & \textbf{86.43} \\
		\midrule
		\multirow{4}{*}{OGAProx-C2}
		& Breast cancer & 97.15 & 97.45 & \textbf{97.59} & 97.15 & 96.57 \\
		& Heart disease & 74.07 & 78.52 & 76.11 & 82.22 & \textbf{83.70} \\
		& Ionosphere    & 70.42 & 84.37 & 86.48 & 90.85 & \textbf{92.25} \\
		& Sonar         & 69.05 & 74.29 & 85.24 & 85.71 & \textbf{86.19} \\
		\bottomrule
	\end{tabular}
	\vspace{0.2cm}
	\caption{
		TSA of regularised 2-norm soft margin classifier ($ \mu = 1 $, $ \nu = \frac{1}{2} $, $ C = 1 $) trained with OGAProx-C1, OGAProx-A, OGAProx-C2, APD1 and APD2, averaged over 10 random partitions.}
	\label{tab:strconvex-strconcave}
\end{table}

We see in Table~\ref{tab:strconvex-strconcave} that for the regularised 2-norm soft margin classifier the situation is similar to the version without additional regulariser. This time for the constant methods, OGAProx-C1 and APD1, OGAProx is better than APD on three data sets while APD is better than OGAProx on only one. On the contrary, for the adaptive methods, OGAProx-A and APD2, it is the other way round. APD performs better than APD on three data sets while OGAProx is better than APD on only one. For the second version of OGAProx with constant parameter choice exhibiting linear convergence in both iterates and function values, there is no APD counterpart. When we compare the results for OGAProx-C2 to those of OGAProx-C1, then we see that the TSA values become better in general with improvements on three out of four data sets and one draw. On the Breast cancer data set OGAProx-C2 even delivers the maximum TSA over all considered methods.

\subsection{Classification incorporating minimax group fairness}
We want to classify labelled data $ (a_{j}, b_{j})_{j=1}^{n} \subseteq \R^{d} \times \{ \pm 1 \} $, additionally taking into account so-called \emph{minimax group fairness}~\cite{ParetoFair, GroupFair}. The data is divided into $ m $ groups $ G_{1}, ..., G_{m} $, such that for $ i \in [m] := \{ 1, ..., m \} $ we have $ G_{i} = (a_{i_{j}}, b_{i_{j}})_{j=1}^{n_{i}} \subseteq (a_{j}, b_{j})_{j=1}^{n} $ with $ n_{i} := \abs{G_{i}} $ and $ i_{j} \in [n] $ for all $ i \in [m] $ and all $ j \in [n_{i}] $.
\emph{Fairness} is measured by worst-case outcomes across the considered groups. Hence we consider the following problem,
\begin{equation}\label{fair-minimax}
	\min_{x \in \R^d} \max_{i \in [m]} f_{i}(x),
\end{equation}
with
\begin{equation}
	f_{i}(x) = \frac{1}{n_{i}} \sum_{j=1}^{n_{i}} L(h_{x}(a_{i_{j}}), b_{i_{j}}),
\end{equation}
where $ h_{x} $ is a function parametrised by $ x $, mapping features to predicted labels, and $ L $ is a loss function measuring the error between the predicted and true labels.

It is easy to see that~\eqref{fair-minimax} is equivalent to
\begin{equation}
	\min_{x \in \R^d} \max_{y \in \Delta_{m}} \sum_{i=1}^{m} y_{i} f_{i}(x),
\end{equation}
where $ \Delta_{m} := \{ (v_{1}, ..., v_m) \in \R^{m} \ | \ \sum_{i=1}^{m} v_{i} = 1, \; v_{i} \geq 0 \text{ for } i = 1, ..., m \} $ denotes the probability simplex in $ \R^{m} $.
We will work with a linear (affine) predictor $ h_{x}: \R^{d} \to \R $ given by
\begin{equation}
	h_{x}(a) = a^{T}x,
\end{equation}
with $ x \in \R^{d} $ and $ L: \R \times \R \to \R $ being the hinge loss, i.e.,
\begin{equation}
	L(r,s) = \max \{ 0, 1 - sr \},
\end{equation}
for $ r $, $ s \in \R $.

Combining all of the above we get
\begin{equation}\label{fair-minimax-nice}
	\min_{x \in \R^{d}} \max_{y \in \R^{m}} \Phi (x, y) - g(y),
\end{equation}
with $ \Phi: \R^{d} \times \R^{m} \to \R $ defined by
\begin{equation}
	\Phi (x, y) = \sum_{i=1}^{m} y_{i} \frac{1}{n_{i}} \sum_{j=1}^{n_{i}} \max \{ 0, 1 - b_{i_{j}} a_{i_{j}}^{T} x \},
\end{equation}
and $ g: \R^{m} \to \R \cup \{ + \infty \} $ given by
\begin{equation}
	g(y) = \delta_{\Delta_{m}}(y).
\end{equation}
The function $ g $ is proper,  lower semicontinuous and convex (with modulus $ \nu = 0 $). Furthermore we observe that $ \Phi (\cdot, y): \R^{d} \to \R $ is proper, convex and lower semicontinuous for all $ y \in \dom g = \Delta_{m} $ and for all $ x \in \Pr_{\R^{d}} (\dom \Phi) = \R^{d} $ we have $ \dom \Phi (x, \cdot) = \R^{m} $ and $ \Phi (x, \cdot): \R^{m} \to \R $ is concave and Fr{\'e}chet differentiable. However, note  that $ \Phi $ is not differentiable in its first component.

Moreover the Lipschitz condition on the gradient is fulfilled as well. Indeed, for $ (x,y), \, (x',y') \in \R^{d} \times \Delta_{m} $ we have
\begin{equation}
	\norm{ \nabla_{y} \Phi (x,y) - \nabla_{y} \Phi (x',y') } \leq L_{yx} \norm{x-x'} + L_{yy} \norm{y-y'},
\end{equation}
with
\begin{equation}
	L_{yx} = \sqrt{ \sum_{i=1}^{m} \frac{1}{n_{i}} \sum_{j=1}^{n_{i}} \normsq{a_{i_{j}}} } \quad \mbox{and} \quad
	L_{yy} = 0.
\end{equation}

Additionally, with $ \tau > 0 $ and $ y \in \dom g $, we have for $ x \in \R^{d} $
\begin{equation}\label{fair-prox}
	\prox{\tau \Phi(\cdot, y)}{x} =
		\argmin_{u \in \R^{d}}
			\left\{
				\tau \sum_{i=1}^{m} y_{i} \frac{1}{n_{i}} \sum_{j=1}^{n_{i}} \max \{ 0, 1 - b_{i_{j}} a_{i_{j}}^{T} u \}
				+ \frac{1}{2} \normsq{u - x}
			\right\}.\\
\end{equation}
By introducing slack variables for the pointwise maximum, we see that the above minimisation problem is equivalent to the following quadratic program
\begin{equation}
\begin{split}
	\begin{aligned}[t]
		\min_{\substack{u \in \R^{d},\\ r_{ij} \in \R, \\ i \in [m], \; j \in [n_{i}]}} &  \quad
		\left\{
			\tau \sum_{i=1}^{m} \sum_{j=1}^{n_{i}} y_{i} \frac{1}{n_{i}}
				r_{ij}
			+ \frac{1}{2} \normsq{u - x}
		\right\}.\\
		\textrm{s.t.} \quad & r_{ij} \geq 0 \quad \forall \, i \in [m], \, \forall \, j \in [n_{i}]\\
		& r_{ij} + b_{i_{j}} a_{i_{j}}^{T} u \geq 1 \quad \forall \, i \in [m], \, \forall \, j \in [n_{i}]\\
	\end{aligned}
\end{split}
\end{equation}

For our practical applications we consider the Statlog \emph{heart disease} data set (270 observations; 13 features) from the ``UCI Machine Learning Repository''~\cite{uci} and consider two different groupings; one consisting of the sex of the patients, while the other one is regarding the patients' age. For ``sex'' we have two groups, that is female patients (Group~S1) and male patients (Group~S2), whereas for ``age'' we consider three groups, that is patients that are younger than 50 years old (Group~A1), patients that are younger than 60 but at least 50 years old (Group~A2), and patients that are 60 years of age or older (Group~A3). The data set is randomly partitioned into 80~\% training data and 20~\% test data. The results in Table~\ref{tab:sex} and Table~\ref{tab:age} are the values of the achieved test set accuracy (TSA) averaged over 5 random partitions. For each considered group we state the intragroup TSA together with the overall TSA for the entire test set.

Every time we report the results obtained by iterates of OGAProx governed by solving the minimax problem~\eqref{fair-minimax-nice} taking into account the considered groups (``with fairness''), as well as the results obtained by not taking into account minimax group fairness (``without fairness''), i.e., solving the problem for a single extensive group $ G_{1} = (a_{j}, b_{j})_{j=1}^{n} $ with $ n_{1} = n $, yielding the minimisation of the average loss over the whole population and leading to an ``ordinary'' minimisation problem.

We see in Table~\ref{tab:sex} and Table~\ref{tab:age} that taking into account the groups regarding ``sex'' and ``age'', respectively, is beneficial for training the affine classifier. In both cases ``with fairness'' achieves the highest TSA for each group and at the same time the highest overall TSA as well.\vspace{1ex}

\begin{table*}\centering
	\begin{tabular}{@{\extracolsep{4pt}}rccccccccc@{}}
	\toprule
	& \phantom{} & \multicolumn{2}{c}{Group~S1} & \phantom{} & \multicolumn{2}{c}{Group~S2} & \phantom{ } & \multicolumn{2}{c}{Overall}\\
	\cmidrule{3-4} \cmidrule{6-7} \cmidrule{9-10}
	&& \multirow{2}{*}{\shortstack[c]{with\\fairness}} & \multirow{2}{*}{\shortstack[c]{without\\fairness}} && \multirow{2}{*}{\shortstack[c]{with\\fairness}} & \multirow{2}{*}{\shortstack[c]{without\\fairness}} && \multirow{2}{*}{\shortstack[c]{with\\fairness}} & \multirow{2}{*}{\shortstack[c]{without\\fairness}} \\
	$ k $&&&&&&&&&\\
	\midrule
	100		&& \textbf{95.78} & \textbf{95.78} && 80.68				& 80.84 && 85.56			& 85.56 \\
	500		&& \textbf{95.78} & \textbf{95.78} && \textbf{81.15}	& 80.28 && \textbf{85.93}	& 85.19 \\
	1000	&& \textbf{95.78} & \textbf{95.78} && \textbf{81.15}	& 80.28 && \textbf{85.93}	& 85.19 \\
	\bottomrule
	\end{tabular}
	\caption{TSA of the affine classifier after $ k $ iterations of OGAProx for the groups according to ``sex'', averaged over 5 random partitions.}
	\label{tab:sex}
\end{table*}

\begin{table*}\centering
	\begin{tabular}{@{\extracolsep{2pt}}rcccccccccccc@{}}
	\toprule
	& \phantom{} & \multicolumn{2}{c}{Group~A1} & \phantom{}& \multicolumn{2}{c}{Group~A2} &
	\phantom{} & \multicolumn{2}{c}{Group~A3} & \phantom{ } & \multicolumn{2}{c}{Overall}\\
	\cmidrule{3-4} \cmidrule{6-7} \cmidrule{9-10} \cmidrule{12-13}
	&& \multirow{2}{*}{\shortstack[c]{with\\fairness}} & \multirow{2}{*}{\shortstack[c]{without\\fairness}} && \multirow{2}{*}{\shortstack[c]{with\\fairness}} & \multirow{2}{*}{\shortstack[c]{without\\fairness}} && \multirow{2}{*}{\shortstack[c]{with\\fairness}} & \multirow{2}{*}{\shortstack[c]{without\\fairness}} && \multirow{2}{*}{\shortstack[c]{with\\fairness}} & \multirow{2}{*}{\shortstack[c]{without\\fairness}} \\
	$ k $&&&&&&&&&&&&\\
	\midrule
	100		&& 87.76 			& 86.48 && 82.97			& 82.97 && \textbf{86.93}	& \textbf{86.93} && 85.93			& 85.56 \\
	500		&& \textbf{88.71}	& 85.53 && \textbf{83.84}	& 82.97 && \textbf{86.93}	& \textbf{86.93} && \textbf{86.67}	& 85.19 \\
	1000	&& \textbf{88.71}	& 85.53 && \textbf{83.84}	& 82.97 && \textbf{86.93}	& \textbf{86.93} && \textbf{86.67}	& 85.19 \\
	\bottomrule
	\end{tabular}
	\caption{TSA of the affine classifier after $ k $ iterations of OGAProx for the groups according to ``age'', averaged over 5 random partitions.}
	\label{tab:age}
\end{table*}

{\bf \noindent Acknowledgments.} The work of ERC is supported by FWF (Austrian Science Fund), project P 29809-N32.

\end{document}